\DeclareMathAlphabet{\mathpzc}{OT1}{pzc}{m}{it}
\newcommand{\sr}{{\tfrac12}}
\newcommand{\R}{\mathbb{R}}
\newcommand{\F}{\mathcal{F}}
\newcommand{\Y}{\mathpzc{Y}}
\newcommand{\Fm}{\mathpzc{F}}
\newcommand{\N}{\mathpzc{N}}
\newcommand{\vero}{\texttt{v}}
\newcommand{\C}{\mathcal{C}}
\newcommand{\V}{\mathbb{V}}
\newcommand{\W}{\mathbb{W}}
\newcommand{\T}{\mathscr{T}}
\newcommand{\Q}{\mathbb{Q}}
\newcommand{\Ss}{\mathscr{S}}
\newcommand{\Tm}{\mathcal{T}}
\newcommand{\HL}{ \mbox{ \raisebox{7.4pt} {\tiny$\circ$} \kern-10.3pt} {H_L^1} }
\newcommand{\Wp}{ \mbox{ \raisebox{7.7pt} {\scriptsize$\circ$} \kern-10.1pt} {W^1_p} }
\newcommand{\Wpp}{ \mbox{ \raisebox{7.7pt} {\scriptsize$\circ$} \kern-10.1pt} {W^1_{p'}} }
\newcommand{\Sz}{ \mbox{ \raisebox{7.5pt} {\scriptsize$\circ$} \kern-10.1pt} {\Ss} }
\newcommand{\HLnew}{ \mbox{ \raisebox{7pt} {\scriptsize$\circ$} \kern-10.1pt}{H}^1_L }
\newcommand{\HLn}{{\mbox{\,\raisebox{5.1pt} {\tiny$\circ$} \kern-9.1pt}{H}^1_L  }}
\newcommand{\HLs}{{\mbox{\raisebox{8.7pt} {\scriptsize$\circ$} \kern-10.1pt}{H}^1_L  }}
\newcommand{\ue}{\mathscr{U}}
\newcommand{\verot}{\emph{\texttt{v}}}
\newcommand{\Tr}{\mathbb{T}}
\newcommand{\bu}{\textbf{u}}
\newcommand{\poverp}{p'\!/p}
\DeclareMathOperator*{\tr}{tr_\Omega}
\DeclareMathOperator*{\signum}{sign}
\DeclareMathOperator*{\supp}{supp}
\DeclareMathOperator*{\diag}{diag}
\newcommand{\GRAD}{\nabla}
\newcommand{\DIV}{\textrm{div}}
\newcommand{\diff}{\, \mbox{\rm d}}
\newcommand{\ie}{i.e.,\@\xspace}
\newcommand{\cf}{cf.\@\xspace}
\newcommand{\Hs}{\mathbb{H}^s(\Omega)}
\newcommand{\Ws}{\mathbb{H}^{1-s}(\Omega)}
\DeclareMathOperator*{\diam}{diam}
\newcommand{\A}{\mathcal{A}}
\newcommand{\Loneloc}{{L^1_{\mathrm{loc}}}}
\newcommand{\Hsd}{\mathbb{H}^{-s}(\Omega)}
\newtheorem{theorem}{Theorem}[section]
\newtheorem{lemma}[theorem]{Lemma}
\newtheorem{proposition}{Proposition}[section]
\newtheorem{corollary}[theorem]{Corollary}
\theoremstyle{definition}
\newtheorem{definition}[theorem]{Definition}
\theoremstyle{remark}
\newtheorem{remark}[theorem]{Remark}
\numberwithin{equation}{section}
\DeclareMathOperator{\Sob}{sob}
\newcommand{\dist}{{\textup{\textsf{d}}_{x_0}}}
\newcommand{\distb}{{\textup{\textsf{d}}_{x_0}}^{\kern-0.8em\beta}}
\newcommand{\calL}{{\mathcal L}}
\newcommand{\Wmpo}{ \mbox{ \raisebox{7.4pt} {\tiny$\circ$} \kern-10.7pt} {W_p^m} }
\newcommand{\Wonepo}{ \mbox{ \raisebox{7.4pt} {\tiny$\circ$} \kern-10.7pt} {W_p^1} }
\newcommand{\Nin}{\,{\mbox{\,\raisebox{6.2pt} {\tiny$\circ$} \kern-11.1pt}\N }}
\newcommand{\Ninn}{{\mbox{\,\raisebox{4.5pt} {\tiny$\circ$} \kern-8.8pt}\N }}
\newcommand{\polX}{{\mathbb X}}
\newcommand{\polY}{{\mathbb Y}}
\begin{document}
\title[Polynomial interpolation in weighted spaces]{Piecewise polynomial interpolation in Muckenhoupt weighted 
Sobolev spaces and applications}

\author[R.H.~Nochetto]{Ricardo H.~Nochetto}
\address{Department of Mathematics and Institute for Physical Science and Technology,
University of Maryland, College Park, MD 20742, USA.}
\email{rhn@math.umd.edu}
\thanks{RHN has been partially supported by NSF grants DMS-1109325 and DMS-1411808.}

\author[E.~Ot\'arola]{Enrique Ot\'arola}
\address{Department of Mathematics, University of Maryland, College Park, MD 20742, USA and
Department of Mathematical Sciences, George Mason University, Fairfax, VA 22030, USA.}
\email{kike@math.umd.edu}
\thanks{EO has been partially supported by the Conicyt-Fulbright Fellowship Beca Igualdad de Oportunidades and NSF grants DMS-1109325 and DMS-1411808.}

\author[A.J.~Salgado]{Abner J.~Salgado}
\address{Department of Mathematics, University of Tennessee, Knoxville, TN 37996, USA.}
\email{asalgad1@utk.edu}
\thanks{AJS has been partially supported by NSF grant DMS-1418784.}

\subjclass[2010]{35J70,    
35J75,    
65D05,    
65N30,    
65N12.}   

\date{Version of \today.}

\keywords{Finite elements, interpolation estimates, weighted Sobolev spaces,
Muckenhoupt weights, nonuniform ellipticity, anisotropic estimates}

\begin{abstract}
We develop a constructive piecewise polynomial approximation theory in
weighted Sobolev spaces with Muckenhoupt weights for any polynomial degree.
The main ingredients to derive optimal error estimates for an
averaged Taylor polynomial are a suitable weighted Poincar\'e
inequality, a cancellation property and a simple induction argument.
We also construct a quasi-interpolation operator, built on local
averages over stars, which is well defined for functions in $L^1$.
We derive optimal error estimates for any polynomial degree on
simplicial shape regular meshes.
On rectangular meshes, these estimates are valid under the condition
that neighboring elements have comparable size, which yields
optimal anisotropic error estimates over $n$-rectangular domains.
The interpolation theory extends to cases when the error and function
regularity require different weights.
We conclude with three applications: nonuniform elliptic boundary value problems, elliptic problems with singular sources, and fractional 
powers of elliptic operators.
\end{abstract}

\maketitle

\section{Introduction}
\label{sec:introduccion}

A fundamental tool in analysis, with both practical and theoretical relevance,
is the approximation of a function by a simpler one.
For continuous functions a foundational result in this direction was given by K.~Weierstrass 
in 1885: continuous functions defined on a compact interval can be uniformly approximated as 
closely as desired by polynomials.
Mollifiers, interpolants, splines and even Nevanlinna-Pick theory can be regarded as instances of 
this program; see, for instance, \cite{AMBook,LS:07}.
For weakly differentiable functions, the approximation by polynomials is very useful when trying to understand 
their behavior. In fact, this idea goes back to S.L.~Sobolev \cite{Sobolev}, 
who used a sort of averaged Taylor polynomial to discuss equivalent norms in Sobolev spaces.

The role of polynomial approximation and error estimation
is crucial in numerical analysis: it is the basis of discretization techniques 
for partial differential equations (PDE), particularly the finite element method. For the latter, 
several constructions for standard Sobolev spaces $W_p^{1}$, with 
$1 \leq p \leq \infty$, and their properties are well studied; see 
\cite{Clement,DupScott,Duran83,DL:05,SZ:90}.

On the other hand, many applications lead to boundary value problems for nonuniformly 
elliptic equations. The ellipticity distortion can be caused by degenerate/singular 
behavior of the coefficients of the differential operator or by singularities in the domain.
For such equations
it is natural to look for solutions in weighted Sobolev spaces
\cite{AGM,BBD:06,CS:11,CS:07,DAngelo:SINUM,DL:10,FKS:82,FS:12,Kufner,Turesson}
and to study the regularity properties of the solution in weighted spaces
as well \cite{KufnerSandig}. Of particular 
importance are weighted Sobolev spaces with a weight belonging
to the so-called Muckenhoupt class $A_p$ 
\cite{Muckenhoupt}; see also \cite{FKS:82,Haroske,Turesson}. However, 
the literature focusing on polynomial approximation in this type of Sobolev 
spaces is rather scarce; we refer the reader to
\cite{AGM,Apel:99,ABH:06,BBD:06,DAngelo:SINUM,French,GP:06,LIH} 
for some partial results.
Most of these results focus on a particular nonuniformly elliptic equation and exploit
the special structure of the coefficient to derive polynomial interpolation results.

To fix ideas, consider the following nonuniformly elliptic boundary value problem: let $\Omega$ be an 
open and bounded subset of $\R^n$ ($n\ge1$) with boundary $\partial\Omega$. Given a function $f$,
find $u$ that solves
\begin{equation}
\label{weighted_second}
  \begin{dcases}
    -\DIV (\A(x) \nabla u) = f, & \text{in } \Omega, \\
    u = 0, & \text{on } \partial\Omega,
  \end{dcases}
\end{equation}
where 
$\A : \Omega \rightarrow \R^{n\times n}$ is symmetric and satisfies the following nonuniform ellipticity condition
\begin{equation}
 \omega(x) |\xi|^2 \lesssim \xi^\intercal \A(x) \xi \lesssim \omega(x) |\xi|^2,
 \quad \forall \xi \in \R^n, \quad a.e.~\Omega.
\label{eq:weightissingular}
\end{equation}
Here the relation $a \lesssim b$ indicates that $a \leq Cb$, with a constant $C$ and $\omega$ is a weight 
function, \ie a nonnegative and locally integrable measurable function, which might 
vanish, blow up, and possess singularities. 
Examples of this type of equations are the harmonic extension problem related with the 
fractional Laplace operator \cite{CS:11,CS:07,NOS}, elliptic problems involving 
 measures 
\cite{AGM,DAngelo:SINUM}, elliptic PDE in an axisymmetric three dimensional domain with axisymmetric data 
\cite{BBD:06,GP:06}, and equations modeling the motion of particles in a central potential field in quantum 
mechanics \cite{ABH:06}. Due to the nature of the coefficient $\A$, the classical Sobolev 
space $H^1(\Omega)$ is not appropriate for the analysis and approximation of this problem.

Nonuniformly elliptic equations of the type \eqref{weighted_second}--\eqref{eq:weightissingular},
with $\omega$ in the so-called Muckenhoupt class $A_2$, have been studied in \cite{FKS:82}:
for $f \in L^2(\omega^{-1},\Omega)$, there exists a unique 
solution in $H_0^1(\omega,\Omega)$ \cite[Theorem 2.2]{FKS:82} (see \S~\ref{sub:sec:weighted_spaces} for notation). 
Consider the discretization of \eqref{weighted_second} with the finite element method.
Let $\T$ be a conforming triangulation of $\Omega$ and let $\V(\T)$ be a finite element space. The Galerkin 
approximation of the solution to \eqref{weighted_second} is given by the unique function $U_{\T} \in \V(\T)$ 
that solves
\begin{equation}
\label{weighted_second_discrete}
  \int_{\Omega} \A \nabla U_{\T} \cdot \nabla W = \int_{\Omega} f W,
  \quad \forall W \in \V(\T).
\end{equation}
Invoking Galerkin orthogonality, we deduce
\begin{equation}
\label{GO}
   \| u - U_{\T} \|_{H_0^1(\omega,\Omega)} \lesssim
      \inf_{W \in \V(\T_{\Y})} \| u - W \|_{H_0^1(\omega,\Omega)}.
\end{equation}
In other words, the numerical analysis of this boundary value problem reduces to a result in approximation 
theory: the distance between the exact solution $u$ and its approximation $U_{\T}$ in a finite element space 
is bounded by 
the best approximation error in the finite element space with respect to an appropriate weighted Sobolev 
norm. A standard way of obtaining bounds for the approximation error is by considering $W = \Pi_{\T} v$ in 
\eqref{GO}, where $\Pi_\T$ is a suitable interpolation operator. 

The purpose of this work is twofold. We first go back to the basics, 
and develop an elementary
constructive approach to piecewise polynomial interpolation in
weighted Sobolev spaces with Muckenhoupt weights. We consider an
averaged version of the Taylor polynomial and, upon using an 
appropriate weighted Poincar\'e inequality and a cancellation
property, we derive optimal approximation
estimates for constant and linear approximations. We extend these results
to any polynomial degree $m$ ($m \geq 0$), by a simple induction argument. 

The functional framework considered is 
weighted Sobolev spaces with weights in the Muckenhoupt class
$A_p(\R^n)$, thereby extending the classical polynomial approximation theory in Sobolev spaces
\cite{BrennerScott,Ciarletbook,Clement,SZ:90}. 
In addition, we point out
that the results about interpolation in Orlicz spaces of \cite{DR:07,Duran:87} do not apply to our 
situation since, for weighted spaces, the Young function used to define the modular depends on the point in space as well.
In this respect, our results can be regarded as a first step in the development of an approximation theory in Orlicz-Musielak spaces
and in Sobolev spaces in metric measure spaces \cite{H:96}.

The second main contribution of this work is the 
construction of a quasi-inter\-polation operator
$\Pi_{\T}$, built on local averages over stars and thus well defined for functions in $L^1(\Omega)$
as those in \cite{Clement,SZ:90}.
The ensuing polynomial approximation theory in weighted Sobolev spaces with Muckenhoupt weights
allows us to obtain optimal and local interpolation estimates for the 
quasi-interpolant $\Pi_{\T}$. 
On simplicial discretizations, these results hold true for any polynomial degree $m \geq 0$,
and they are derived in the weighted $W_p^k$-seminorm ($0 \leq k \leq m+1$). 
The key ingredient is an invariance property of 
the quasi-interpolant $\Pi_{\T}$ over the finite element space. 
On the other hand, on rectangular discretizations, we only assume that neighboring cells in $\T$ have comparable size,
as in \cite{DL:05,NOS}. This mild assumption enables us also to obtain anisotropic error estimates 
for domains that can be decomposed into $n$--rectangles. 
These estimates are derived in the weighted $W_p^1$-semi-norm
and the weighted $L^p$-norm, the latter being a new result
even for the unweighted setting.
For $m=0,1$, we also derive interpolation estimates in
the space $W^m_q(\rho,\Omega)$ when the smoothness is measured in the space $W_p^{m+1}(\omega,\Omega)$, with different
weights $\omega \neq \rho$ and Lebesgue exponents $ 1< p \leq q$, provided $W_p^{m+1}(\omega,\Omega) \hookrightarrow W^m_q(\rho,\Omega)$.

The outline of this paper is as follows. In \S~\ref{sub:sec:notations} we introduce some terminology used 
throughout this work. In \S~\ref{sub:sec:weighted_spaces}, we recall the definition of a Muckenhoupt class, 
weighted Sobolev spaces and some of their properties. Section~\ref{sec:poincare} is dedicated to an important 
weighted $L^p$-based Poincar\'e inequality over star-shaped domains and domains that can be written
as the finite union of star-shaped domains. In section~\ref{sec:intweighted}, we 
consider an averaged version of the Taylor polynomial, and we 
develop a constructive theory of piecewise polynomial interpolation in
weighted Sobolev spaces with Muckenhoupt weights. 
We discuss the quasi-interpolation operator $\Pi_\T$ and its properties in 
section~\ref{sec:int}. We derive optimal approximation properties in the weighted 
$W_p^k$-seminorm for simplicial triangulations in \S~\ref{subsec:intP}.
In \S~\ref{subsec:intQ} we derive anisotropic error estimates 
on rectangular discretizations for a $\Q_1$ quasi-interpolant operator
assuming that $\Omega$ is an $n$-rectangle.
Section~\ref{sub:sec:diff_weight} is devoted to derive
optimal and local interpolation estimates for different metrics 
(\ie $p \leq q$, $\omega \neq \rho$). Finally, in 
section~\ref{sec:applications} we present applications of our interpolation theory
to nonuniformly elliptic equations \eqref{weighted_second},
elliptic equations with singular sources,
and fractional powers of elliptic operators.

\section{Notation and preliminaries}
\subsection{Notation}
\label{sub:sec:notations}

Throughout this work, $\Omega$ is an open, bounded and connected subset of $\R^n$, with $n\geq1$.
The boundary of $\Omega$ is denoted by $\partial\Omega$. Unless specified otherwise, we will assume
that $\partial\Omega$ is Lipschitz.

The set of locally integrable functions on $\Omega$ is denoted by $\Loneloc(\Omega)$.
The Lebesgue measure of a measurable subset $E \subset \R^n$ is denoted by $|E|$. The mean value of a locally 
integrable function $f$ over a set $E$ is
\[
\fint_{E} f \diff x = \frac{1}{|E|} \int_{E} f \diff x.
\]

For a multi-index $\kappa = (\kappa_1,\dots,\kappa_n) \in \mathbb{N}^n$ we denote
its length by $|\kappa| = \kappa_1 + \cdots + \kappa_n$, and, if $x \in \R^n$, we set
$x^{\kappa} = x_1^{\kappa_1}\dots x_n^{\kappa_n} \in \R$, and
\[
 D^{\kappa} = \frac{\partial^{\kappa_1}}{\partial x_1^{\kappa_1}}
 \dots \frac{\partial^{\kappa_n}}{\partial x_n^{\kappa_n}}.
\]

Given $ p\in (1 ,\infty)$, we 
denote by $p'$ the real number such that $1/p + 1/p' = 1$, \ie $p' = p/(p-1)$.

Let $ \gamma, z  \in \R^n$, the binary operation $ \circ : \R^n \times \R^{n} \rightarrow \R^{n}$
is defined by 
\begin{equation}
\label{eq:defcirc}
  \gamma \circ z = (\gamma_1 z_1, \gamma_2 z_2,\cdots, \gamma_n z_n) \in \R^{n}.
\end{equation}

If $X$ and $Y$ are topological vector spaces, we write $X \hookrightarrow Y$
to denote that $X$ is continuously embedded in $Y$. We denote by $X'$ the dual of $X$.
If $X$ is normed, we denote by $\|\cdot\|_X$ its norm.
The relation $a \lesssim b$ indicates that $a \leq Cb$, with a constant $C$ that does not
depend on either $a$ or $b$, the value of $C$ might change at each occurrence. 

\subsection{Weighted Sobolev spaces}
\label{sub:sec:weighted_spaces}
We now introduce the class of Muckenhoupt weighted Sobolev spaces and refer to
\cite{Javier,FKS:82,HKM,Kufner,Turesson} for details. We start with the definition of a weight.

\begin{definition}[weight]
\label{weight} 
A weight is a function $\omega \in \Loneloc(\R^n)$ such that $\omega(x) > 0$ for a.e.~$x \in \R^n$.
\end{definition}

Every weight induces a measure, with density $\omega \diff x$, over the Borel sets of $\R^n$.
For simplicity, this measure will also be denoted by $\omega$. For a Borel set $E \subset \R^n$
we define $\omega(E) = \int_{E}\omega \diff x$ .

We recall the definition of Muckenhoupt classes; see \cite{Javier,FKS:82,Muckenhoupt,Turesson}.

\begin{definition}[Muckenhoupt class $A_p$]
 \label{def:Muckenhoupt}
Let $\omega$ be a weight and $1 < p < \infty$. We say $\omega \in A_p(\R^n)$
if there exists a positive constant $C_{p,\omega}$ such that
\begin{equation}
  \label{A_pclass}
  \sup_{B} \left( \fint_{B} \omega \right)
            \left( \fint_{B} \omega^{1/(1-p)} \right)^{p-1}  = C_{p,\omega} < \infty,
\end{equation}
where the supremum is taken over all balls $B$ in $\R^n$.
In addition,
\[
  A_\infty(\R^n ) = \bigcup_{p>1} A_p(\R^n), \qquad A_1(\R^n) = \bigcap_{p>1} A_p(\R^n).
\]
If $\omega$ belongs to the Muckenhoupt class $A_p(\R^n)$, we say that $\omega$ is an $A_p$-weight, and
we call the constant $C_{p,\omega}$ in \eqref{A_pclass} the $A_p$-constant of $\omega$. 
\end{definition}

A classical example is the function $|x|^{\gamma}$, which is an $A_p$-weight if and only if 
$-n< \gamma < n(p-1)$. Another important example is $d(x)= \textup{\textsf{d}}(x,\partial \Omega)^{\alpha}$,
where for $x \in \Omega$, $\textup{\textsf{d}}(x,\partial \Omega)$ denotes the distance 
from the point $x$ to the boundary
$\partial \Omega$. The function $d$ belongs to $A_2(\R^n)$ if and only if $-n<\alpha<n$. This function
is used to define weighted Sobolev spaces which are important to study
Poisson problems with singular sources; see \cite{AGM,DAngelo:SINUM}.

Throughout this work, we shall use some properties of the $A_p$-weights which, for completeness, we state and 
prove below.

\begin{proposition}[properties of the $A_p$-class]
\label{pro:properties}
Let $1<p<\infty$, and $\omega \in A_p(\R^n)$. Then, we have the following properties:
\begin{enumerate}[(i)]
 \item \label{i} $\omega^{-1/(p-1)} \in \Loneloc(\R^n)$.
 \item \label{ii} $C_{p,\omega} \geq 1$.
 \item \label{iii} If $1 < p < r < \infty$, then $A_p(\R^n) \subset A_r(\R^n)$, and 
 $C_{r,\omega} \leq C_{p,\omega}$.
 \item \label{iv} $\omega^{-1/(p-1)} \in A_{p'}(\R^n)$ and, conversely,
  $\omega^{-1/(p'-1)} \in A_{p}(\R^n)$. Moreover, 
  \[ 
  C_{p',\omega^{-1/(p-1)}} = C_{p,\omega}^{1/(p-1)}. 
  \]
 \item \label{v} The $A_p$-condition is invariant under translations and isotropic dilations, \ie the weights
 $x \mapsto \omega(x+\mathbf{b})$ and $x \mapsto \omega(\mathbf{A}x)$, with $\mathbf{b} \in \R^n$
 and $\mathbf{A}=a \cdot \mathbf{I}$ with $a \in \R$, both belong to $A_p(\R^n)$ with the same 
 $A_p$-constant as $\omega$.
\end{enumerate}
\end{proposition}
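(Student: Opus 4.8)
The plan is to verify the five items essentially in the order stated, since each is a short consequence of the defining inequality \eqref{A_pclass} together with standard tools (Hölder's inequality, the definition of the supremum over balls). For item \eqref{i}, I would observe that finiteness of the supremum in \eqref{A_pclass} forces, for every fixed ball $B$, the quantity $\left(\fint_B \omega^{1/(1-p)}\right)^{p-1}$ to be finite, since $\fint_B \omega > 0$ (as $\omega$ is a positive weight, its average over any ball is strictly positive). Hence $\omega^{1/(1-p)} = \omega^{-1/(p-1)}$ is integrable on every ball, which is exactly local integrability. For item \eqref{ii}, apply Hölder's inequality (or Jensen's inequality with the convex function $t \mapsto t^{-1/(p-1)}$ on the probability space $B$ with normalized measure $\diff x/|B|$): one gets $\left(\fint_B \omega\right)^{-1/(p-1)} \le \fint_B \omega^{-1/(p-1)}$, i.e. $1 \le \left(\fint_B \omega\right)\left(\fint_B \omega^{-1/(p-1)}\right)^{p-1}$ for every ball $B$; taking the supremum yields $C_{p,\omega} \ge 1$.

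For item \eqref{iii}, fix $1 < p < r < \infty$ and a ball $B$. The factor $\fint_B \omega$ is common to both the $A_p$ and $A_r$ expressions, so it suffices to compare $\left(\fint_B \omega^{-1/(r-1)}\right)^{r-1}$ with $\left(\fint_B \omega^{-1/(p-1)}\right)^{p-1}$. Since $(r-1)/(p-1) > 1$, I would apply Jensen's inequality with the convex map $t \mapsto t^{(r-1)/(p-1)}$ to $\omega^{-1/(r-1)}$ on the normalized ball, obtaining $\left(\fint_B \omega^{-1/(r-1)}\right)^{(r-1)/(p-1)} \le \fint_B \omega^{-1/(p-1)}$; raising both sides to the power $p-1$ gives $\left(\fint_B \omega^{-1/(r-1)}\right)^{r-1} \le \left(\fint_B \omega^{-1/(p-1)}\right)^{p-1}$. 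Multiplying by $\fint_B \omega$ and taking the supremum shows $A_p(\R^n) \subset A_r(\R^n)$ with $C_{r,\omega} \le C_{p,\omega}$.

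For item \eqref{iv}, set $\sigma = \omega^{-1/(p-1)}$; note $p' - 1 = 1/(p-1)$, so $\sigma^{-1/(p'-1)} = \sigma^{-(p-1)} = \omega$. The $A_{p'}$-expression for $\sigma$ over a ball $B$ is $\left(\fint_B \sigma\right)\left(\fint_B \sigma^{-1/(p'-1)}\right)^{p'-1} = \left(\fint_B \sigma\right)\left(\fint_B \omega\right)^{1/(p-1)} = \left[\left(\fint_B \omega\right)\left(\fint_B \sigma\right)^{p-1}\right]^{1/(p-1)}$; taking the supremum over $B$ identifies this with $C_{p,\omega}^{1/(p-1)}$, which proves both $\sigma \in A_{p'}(\R^n)$ and the constant identity, and the converse statement follows by the symmetric computation (or by applying the first part with $p'$ in place of $p$). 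For item \eqref{v}, both translations $x \mapsto x + \mathbf{b}$ and isotropic dilations $x \mapsto a x$ map balls to balls bijectively and scale Lebesgue measure by a constant that cancels between numerator and denominator in each average; hence the supremum in \eqref{A_pclass} is computed over the same family of values, and the $A_p$-constant is unchanged. I do not anticipate a genuine obstacle here — everything reduces to Jensen/Hölder and a change of variables; the only point requiring a little care is the bookkeeping of the exponents in \eqref{iv}, where one must keep $p' - 1 = 1/(p-1)$ straight throughout.
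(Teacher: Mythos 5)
Your proposal is correct and follows essentially the same route as the paper: items \eqref{i} and \eqref{iv} read off from the definition \eqref{A_pclass} (you simply spell out the exponent bookkeeping $p'-1=1/(p-1)$ that the paper leaves implicit), items \eqref{ii} and \eqref{iii} via H\"older/Jensen on normalized balls, and item \eqref{v} via the change of variables $y=\mathbf{A}x+\mathbf{b}$ with $a^n|B_r|=|B_{ar}|$. No gaps.
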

\begin{proof}
Properties \eqref{i} and \eqref{iv} follow directly from the definition of the  Muckenhoupt class $A_p(\R^n)$ 
given in \eqref{A_pclass}. By writing $1=\omega^{1/p} \omega^{-1/p}$ and the H{\"o}lder inequality, we obtain 
that for every ball $B \subset \R^n$,
\[
1 = \fint_{B} \omega^{1/p}\omega^{-1/p} \leq 
\left( \fint_{B} \omega \right)^{1/p} \left( \fint_{B} \omega^{-1/(p-1)}\right)^{(p-1)/p},
\]
which proves \eqref{ii}.
Using the H{\"o}lder inequality again, we obtain
\[
 \left( \fint_{B} \omega^{1/(1-r)} \right)^{r-1}
  \leq 
  \left( \fint_{B} \omega^{1/(1-p)} \right)^{p-1},
\]
which implies \eqref{iii}.
Finally, to prove property \eqref{v} we denote $\bar{\omega}(x) = \omega(\mathbf{A}x+\mathbf{b})$,
and let $B_r$ be a ball of radius $r$ in $\R^n$. Using the change of variables $y = \mathbf{A}x+\mathbf{b}$,
we obtain
\begin{eqnarray}
  \fint_{B_r} \bar\omega(x) \diff x
= \frac{1}{a^n |B_{r}|}\int_{B_{ar}} \omega (y) \diff y,
\end{eqnarray}
which, since $a^n |B_{r}| = |B_{ar}|$, proves \eqref{v}.
\end{proof}

From the $A_p$-condition and H{\"o}lder's inequality follows that an $A_p$-weight satisfies the so-called 
\emph{strong doubling property}. The proof of this fact is standard and presented here for completeness; see 
\cite[Proposition 1.2.7]{Turesson} for more details.

\begin{proposition}[strong doubling property]
\label{pro:double}
 Let $\omega \in A_p(\R^n)$ with $1< p < \infty$ and let $E \subset \R^n$ be a measurable
subset of a ball $B \subset \R^n$. Then
\begin{equation}
\label{strong_double}
 \omega(B) \leq C_{p,\omega} \left(\frac{|B|}{|E|} \right)^p \omega(E).
\end{equation}
\end{proposition}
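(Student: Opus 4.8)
The plan is to derive the estimate directly from the $A_p$-condition by relating $\omega(B)$ and $\omega(E)$ through the corresponding Lebesgue measures. First I would write, using H\"older's inequality with exponents $p$ and $p'$ on the set $E$,
\[
 |E| = \int_{E} \omega^{1/p}\,\omega^{-1/p}\diff x
 \leq \left( \int_{E} \omega \diff x\right)^{1/p}
       \left( \int_{E} \omega^{-1/(p-1)}\diff x\right)^{(p-1)/p}
 \leq \omega(E)^{1/p} \left( \int_{B} \omega^{-1/(p-1)}\diff x\right)^{(p-1)/p},
\]
where in the last step I used $E \subset B$ together with the fact that $\omega^{-1/(p-1)}$ is a nonnegative locally integrable function (Proposition~\ref{pro:properties}\eqref{i}), so that integrating over the larger set $B$ only increases the integral.

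Next I would raise this inequality to the $p$-th power and rearrange to isolate $\omega(E)$, obtaining
\[
 \omega(E) \geq \frac{|E|^{p}}{\left( \int_{B} \omega^{-1/(p-1)}\diff x\right)^{p-1}}.
\]
Now I invoke the $A_p$-condition \eqref{A_pclass} applied to the ball $B$: dividing and multiplying by appropriate powers of $|B|$,
\[
 \left( \int_{B} \omega^{-1/(p-1)}\diff x\right)^{p-1}
 = |B|^{p-1}\left( \fint_{B} \omega^{-1/(p-1)}\diff x\right)^{p-1}
 \leq |B|^{p-1}\, C_{p,\omega}\,\left( \fint_{B} \omega\diff x\right)^{-1}
 = |B|^{p}\, C_{p,\omega}\, \omega(B)^{-1}.
\]
Substituting this bound into the previous display yields
\[
 \omega(E) \geq \frac{|E|^{p}\,\omega(B)}{C_{p,\omega}\,|B|^{p}},
\]
which is precisely \eqref{strong_double} after multiplying through by $C_{p,\omega}(|B|/|E|)^p$.

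The argument is essentially a two-line computation, so there is no serious obstacle; the only point requiring a small amount of care is the legitimacy of enlarging the domain of integration from $E$ to $B$ in the term involving $\omega^{-1/(p-1)}$, which is exactly where local integrability of $\omega^{-1/(p-1)}$—guaranteed by membership in $A_p(\R^n)$—is used. One should also note the harmless degenerate case $|E| = 0$, for which the inequality holds trivially.
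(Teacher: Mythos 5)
Your proof is correct and follows essentially the same route as the paper's: H\"older's inequality on $E$ after writing $1=\omega^{1/p}\omega^{-1/p}$, enlarging the integral of $\omega^{-1/(p-1)}$ from $E$ to $B$, and then invoking the $A_p$-condition \eqref{A_pclass} on $B$; the only difference is the cosmetic one of raising to the $p$-th power before rather than after applying \eqref{A_pclass}. No gaps to report.
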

\begin{proof}
Since $E \subset \R^n$ is measurable, we have that
\begin{align*}
 |E|  & \leq \left( \int_{E} \omega \diff x \right)^{1/p}
            \left( \int_{E} \omega^{-\poverp} \diff x \right)^{1/p'}
            \leq \omega(E)^{1/p} |B|^{1/p'} \left( \fint_{B} \omega^{-\poverp}\right)^{1/p'} \\
         &   \leq C_{p,\omega}^{1/p} \, \omega(E)^{1/p} |B|^{1/p'} \left( \fint_{B} \omega\right)^{-1/p} 
            = C_{p,\omega}^{1/p} \left( \frac{\omega(E)}{\omega(B)} \right)^{1/p} |B|.
\end{align*}
This completes the proof.
\end{proof}

In particular, every $A_p$-weight satisfies a \emph{doubling property}, \ie there exists a positive constant 
$C$ such that
\begin{equation}
\label{double}
 \omega(B_{2r}) \leq C \omega(B_r).
\end{equation}
for every ball $B_r \subset \R^n$. The infimum over all constants $C$, for which
\eqref{double} holds, is called the \emph{doubling constant} of $\omega$.
The class of $A_p$-weights was introduced by B.~Muckenhoupt \cite{Muckenhoupt}, who
proved that the $A_p$-weights are precisely those for which the Hardy-Littlewood maximal operator
is bounded from $L^p(\omega,\R^n)$ to $L^p(\omega,\R^n)$, when $1 < p < \infty$.
We now define weighted Lebesgue spaces as follows.

\begin{definition}[weighted Lebesgue spaces]
\label{Lp_weighted} 
Let $\omega \in A_p$, and let $\Omega \subset \R^n$ be an open and bounded domain.
For $1< p < \infty$, we define the weighted Lebesgue space
$L^p(\omega, \Omega)$ as the set of measurable
functions $u$ on $\Omega$ equipped with the norm
\begin{equation}
 \| u \|_{L^p(\omega, \Omega)} = \left( \int_{\Omega} |u|^p \omega \right)^{1/p}. 
\end{equation}
\end{definition}

An immediate consequence of $\omega \in A_p(\R^n)$ is that  functions in $L^p(\omega,\Omega)$ are locally 
summable which, in fact, only requires that $\omega^{-1/(p-1)} \in \Loneloc(\R^n)$.
 
\begin{proposition}[$L^p(\omega,\Omega)\subset \Loneloc(\Omega)$]
\label{pro:loc_int}
Let $\Omega$ be an open set, $1 < p < \infty$ and $\omega$ be a weight such that 
$\omega^{-1/(p-1)} \in \Loneloc(\Omega)$.
Then, $L^p(\omega, \Omega) \subset \Loneloc(\Omega)$.
\end{proposition}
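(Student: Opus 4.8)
The plan is to reduce everything to a single application of H\"older's inequality with the conjugate pair $(p,p')$, exactly mirroring the computation already used in the proof of Proposition~\ref{pro:double}. Given $u \in L^p(\omega,\Omega)$, I want to show that $u$ is integrable on every compact subset $K$ of $\Omega$ (equivalently, on every open subset with compact closure in $\Omega$).

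First I would fix such a set $K$ and write $|u| = |u|\,\omega^{1/p}\,\omega^{-1/p}$ on $K$, which is legitimate since $\omega > 0$ a.e. Applying H\"older's inequality with exponents $p$ and $p'$ then gives
\begin{equation*}
 \int_{K} |u| \diff x \leq \left( \int_{K} |u|^p \omega \diff x \right)^{1/p}
 \left( \int_{K} \omega^{-p'/p} \diff x \right)^{1/p'}.
\end{equation*}
The first factor is bounded by $\|u\|_{L^p(\omega,\Omega)} < \infty$. For the second factor, the only bookkeeping point is the identity $p'/p = 1/(p-1)$, so that $\omega^{-p'/p} = \omega^{-1/(p-1)}$; by hypothesis this function lies in $\Loneloc(\Omega)$, hence its integral over the relatively compact set $K$ is finite. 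Therefore $\int_K |u| \diff x < \infty$, and since $K$ was an arbitrary compact subset of $\Omega$ we conclude $u \in \Loneloc(\Omega)$.

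There is essentially no obstacle here: the statement is a direct consequence of H\"older's inequality, and the ``hard part'' is merely recording the exponent identity $p'/p = 1/(p-1)$ and noting that the local integrability of $\omega^{-1/(p-1)}$ is precisely what makes the second factor finite. One should only remark that this argument uses no full $A_p$-structure of $\omega$, just the integrability of $\omega^{-1/(p-1)}$ on compact sets, which is why the proposition is phrased under that weaker hypothesis (and which, by Proposition~\ref{pro:properties}\eqref{i}, is automatically satisfied when $\omega \in A_p(\R^n)$).
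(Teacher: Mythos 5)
Your argument is correct and is essentially the paper's own proof: the same splitting $|u| = |u|\,\omega^{1/p}\,\omega^{-1/p}$, the same application of H\"older's inequality with the pair $(p,p')$, and the same use of the hypothesis $\omega^{-1/(p-1)} \in \Loneloc(\Omega)$ to control the second factor (the paper works on balls $B \subset \Omega$ rather than compact sets, which is an immaterial difference). Nothing further is needed.
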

\begin{proof}
Let $u \in L^p(\omega, \Omega)$, and let $B \subset \Omega$ be a ball. By H{\"o}lder's
inequality, we have
 \[
   \int_{B} |u| = 
  \int_{B} |u| \omega^{1/p} \omega^{-1/p}
  \leq \left( \int_{B} |u|^p \omega \right)^{1/p}
  \left( \int_{B} \omega^{-1/(p-1)} \right)^{(p-1)/p}
  \lesssim \| u \|_{L^p(\omega, \Omega)},
\]
which concludes the proof.
\end{proof}

Notice that when $\Omega$ is bounded we have $L^p(\omega, \Omega) \hookrightarrow L^1(\Omega)$.
In particular, Proposition~\ref{pro:loc_int} shows that it makes sense to talk about weak 
derivatives of functions in $L^p(\omega, \Omega)$. We define weighted Sobolev spaces as follows.

\begin{definition}[weighted Sobolev spaces]
\label{H_1_weighted} 
Let $\omega$ be an $A_p$-weight with $1< p < \infty$, $\Omega \subset \R^n$ be an open and bounded domain
and $m \in \mathbb{N}$. The weighted Sobolev space $W^m_p(\omega, \Omega)$ is the set of functions
$u \in L^p(\omega,\Omega)$ such that for any multi-index $\kappa$
with $|\kappa| \leq m$, the weak derivatives
$D^{\kappa} u \in L^p(\omega, \Omega)$, with seminorm and norm
\begin{equation*}
 |u|_{W^m_p(\omega, \Omega)} = \left( \sum_{|\kappa| = m } 
 \|D^{\kappa}u\|_{L^p(\omega, \Omega)}^p \right)^{1/p},
\quad
 \| u \|_{W^m_p(\omega, \Omega)} = \left( \sum_{j \leq m } 
 |u|_{W^j_p(\omega, \Omega)}^p \right)^{1/p},
\end{equation*}
respectively. We also define $\Wmpo(\omega, \Omega)$ as the closure of $C_0^{\infty}(\Omega)$
in $W^m_p(\omega, \Omega)$.
\end{definition}

Without any restriction on the weight $\omega$, the space $W^m_p(\omega, \Omega)$
may not be complete. However, when $\omega^{-1/(p-1)}$ is locally integrable in $\R^n$, $W^m_p(\omega,\Omega)$ is a Banach space; 
see \cite{KO84}. Properties of weighted Sobolev spaces can be found in classical refe\-rences
like \cite{HKM, Kufner,Turesson}. It is remarkable that most of the properties of classical Sobolev spaces
have a weighted counterpart and it is more so that this is not because of the specific form of the
weight but rather due to the fact that the weight $\omega$ belongs to the 
Muckenhoupt class $A_p$; see \cite{FKS:82,GU,Muckenhoupt}.
In particular, we have the following 
results (cf.~\cite[Proposition 2.1.2, Corollary 2.1.6]{Turesson} and \cite[Theorem~1]{GU}) .

\begin{proposition}[properties of weighted Sobolev spaces]
\label{PR:banach}
Let $\Omega \subset \R^n$ be an open and bounded domain, $1 < p < \infty$, $\omega \in A_p(\R^n)$
and $m \in\mathbb{N}$. The spaces 
$W^m_p(\omega, \Omega)$ and $\Wmpo(\omega, \Omega)$
are complete, and $W^m_p(\omega, \Omega) \cap C^{\infty}(\Omega)$ is dense in $W^m_p(\omega, \Omega)$.
\end{proposition}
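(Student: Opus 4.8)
The plan is to establish the three assertions of Proposition~\ref{PR:banach} separately, relying on the local integrability of $\omega^{-1/(p-1)}$ (Proposition~\ref{pro:properties}\eqref{i}) as the single structural fact about the weight that is actually needed.

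\textbf{Completeness of $W^m_p(\omega,\Omega)$.} I would argue exactly as in the unweighted case. Let $\{u_j\}$ be a Cauchy sequence in $W^m_p(\omega,\Omega)$. Then for every multi-index $\kappa$ with $|\kappa| \le m$ the sequence $\{D^\kappa u_j\}$ is Cauchy in $L^p(\omega,\Omega)$, which is complete (it is an $L^p$ space over the measure $\omega\diff x$), so $D^\kappa u_j \to v_\kappa$ in $L^p(\omega,\Omega)$ for some $v_\kappa$. By Proposition~\ref{pro:loc_int} and the remark following it, $L^p(\omega,\Omega)\hookrightarrow L^1_{\mathrm{loc}}(\Omega)$ (indeed $\hookrightarrow L^1(\Omega)$ since $\Omega$ is bounded), so convergence in $L^p(\omega,\Omega)$ implies convergence in $L^1_{\mathrm{loc}}(\Omega)$, hence in $\mathscr{D}'(\Omega)$. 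Since distributional differentiation is continuous on $\mathscr{D}'(\Omega)$, passing to the limit in $\int_\Omega u_j D^\kappa\varphi = (-1)^{|\kappa|}\int_\Omega D^\kappa u_j\,\varphi$ for $\varphi\in C_0^\infty(\Omega)$ gives $v_\kappa = D^\kappa v_0$ weakly. Thus $v_0 \in W^m_p(\omega,\Omega)$ and $u_j \to v_0$ in that norm. Completeness of $\Wmpo(\omega,\Omega)$ is then immediate: it is by definition a closed subspace of the Banach space $W^m_p(\omega,\Omega)$.

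\textbf{Density of $W^m_p(\omega,\Omega)\cap C^\infty(\Omega)$.} Here I would use the standard Meyers--Serrin "$H=W$" machinery, checking at each step that it survives the passage to the weighted setting. Cover $\Omega$ by a locally finite family of open sets $\{\Omega_i\}$ with $\overline{\Omega_i}$ compact in $\Omega$, and take a subordinate $C^\infty$ partition of unity $\{\psi_i\}$. Given $u \in W^m_p(\omega,\Omega)$ and $\varepsilon>0$, mollify each $\psi_i u$: since $\psi_i u$ has compact support in $\Omega$, the mollification $(\psi_i u)_{\delta}$ is smooth, supported in a slightly larger compact set, and converges to $\psi_i u$ in $W^m_p(\omega,\Omega)$ as $\delta\to 0$. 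The one genuinely weight-dependent point is this mollification convergence: one needs that translation and convolution with a mollifier are continuous on $L^p(\omega,\R^n)$, and this is precisely a consequence of $\omega \in A_p(\R^n)$ — it follows from the boundedness of the Hardy--Littlewood maximal operator on $L^p(\omega,\R^n)$ (Muckenhoupt's theorem, recalled just before Definition~\ref{Lp_weighted}), which dominates mollifications uniformly and allows a dominated-convergence argument, combined with density of bounded compactly supported functions. Choosing $\delta_i$ so that $\|(\psi_i u)_{\delta_i} - \psi_i u\|_{W^m_p(\omega,\Omega)} \le \varepsilon 2^{-i}$ and setting $v = \sum_i (\psi_i u)_{\delta_i}$ — a locally finite sum, hence in $C^\infty(\Omega)$ — yields $\|v - u\|_{W^m_p(\omega,\Omega)} \le \varepsilon$.

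\textbf{Main obstacle.} The delicate step is the continuity of mollification on $L^p(\omega,\R^n)$, i.e. that $\|f * \rho_\delta - f\|_{L^p(\omega)} \to 0$. Unlike the unweighted case, one cannot simply invoke Young's inequality; the correct tool is the pointwise bound $|f*\rho_\delta(x)| \lesssim M f(x)$ for radially decreasing mollifiers, together with the $A_p$ maximal function estimate, to get a dominating function in $L^p(\omega)$, and then pass to the limit using that the claim is true on the dense subclass $C_0(\R^n)$. Since the excerpt explicitly recalls Muckenhoupt's maximal function theorem, I would cite that directly (or refer to \cite{Turesson}) rather than reprove it. All other steps — the diagonal Cauchy argument, the partition of unity, distributional differentiation — are weight-insensitive and routine, so I would present them briefly and refer to \cite{GU,Turesson} for the complete details.
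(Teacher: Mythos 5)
Your argument is correct. Note that the paper does not actually prove Proposition~\ref{PR:banach}: it simply cites \cite[Proposition 2.1.2, Corollary 2.1.6]{Turesson} and \cite[Theorem~1]{GU} (and \cite{KO84} for completeness under the weaker hypothesis $\omega^{-1/(p-1)}\in\Loneloc$). What you wrote is essentially the standard proof contained in those references: completeness via the completeness of $L^p(\omega\diff x)$ together with the embedding $L^p(\omega,\Omega)\hookrightarrow\Loneloc(\Omega)$ of Proposition~\ref{pro:loc_int}, which lets weak derivatives pass to the limit distributionally; and density via the Meyers--Serrin partition-of-unity construction, where the only weight-sensitive ingredient is the convergence of mollifications in $L^p(\omega,\R^n)$, obtained from the pointwise domination of the mollification by the Hardy--Littlewood maximal function and Muckenhoupt's theorem. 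One small imprecision: your aside that ``translation \dots is continuous on $L^p(\omega,\R^n)$'' as a consequence of $\omega\in A_p$ should not be leaned on, since translation operators are not uniformly bounded on weighted $L^p$ (the weight is not translation invariant); fortunately your actual argument in the last paragraph bypasses this entirely, using the maximal-function majorant together with a.e.\ convergence (or density of $C_0(\R^n)$), which is exactly the right mechanism. With that caveat, and the routine care in choosing the mollification parameters $\delta_i$ so that supports stay in $\Omega$ and the sum remains locally finite, your proof is a complete and faithful expansion of the cited results.
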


\section{A weighted Poincar\'e inequality}
\label{sec:poincare}

In order to obtain interpolation error estimates in 
$L^p(\omega,\Omega)$ and $W^1_p(\omega,\Omega)$, it is instrumental to
have a weighted Poincar\'e-like inequality \cite{DL:05,NOS}.
A pioneering reference is the work by Fabes, Kenig and Serapioni \cite{FKS:82}, which shows
that, when the domain is a ball and the weight belongs to $A_p$ with $1 < p < \infty$,
a weighted Poincar\'e inequality holds \cite[Theorem~1.3 and Theorem~1.5]{FKS:82}.
For generalizations of this result see \cite{FGW:94, HK:00}.
For a star-shaped domain, and a specific $A_2$-weight, we have proved a weighted Poincar\'e inequality \cite[Lemma 4.3]{NOS}.
In this section we extend this result to a general exponent $p$ and a general weight $\omega \in A_p(\R^n)$.
Our proof is constructive and not based on a compactness argument. This allows us to trace the dependence of 
the stability constant on the domain geometry.

\begin{lemma}[weighted Poincar\'e inequality I]
\label{le:Poincareweighted}
Let $S \subset \R^n$ be bounded, star-shaped with respect to a ball $\hat{B}$,
with $\diam S \approx 1$.
Let $\chi$ be a continuous function on $S$ with
$\| \chi \|_{L^1(S)} = 1$. Given 
$\omega \in A_p(\R^n)$,
we define
$\mu(x)= \omega(\mathbf{A}x+\mathbf{b})$,
for  $\mathbf{b} \in \R^n$ and $\mathbf{A}=a \cdot \mathbf{I}$, with $a \in \R$.
If $v \in W^1_p(\mu,S)$ is such that $\int_{S} \chi v = 0$, then 
\begin{equation}
\label{Poincareweighted}
  \| v \|_{L^p(\mu,S)} \lesssim 
\| \nabla v \|_{ L^p(\mu,S) },
\end{equation}
where the hidden constant depends only on $\chi$, $C_{p,\omega}$ and the radius $\hat{r}$ of $\hat{B}$, but
is independent of $\mathbf{A}$ and $\mathbf{b}$.
\end{lemma}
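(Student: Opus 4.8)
The plan is to combine the classical integral representation for domains that are star-shaped with respect to a ball with Muckenhoupt's maximal function theorem, and then to pass from a ball average to the functional $v\mapsto\int_S\chi v$. First I would remove $\mathbf{A}$ and $\mathbf{b}$ from the picture: since $\mathbf{A}=a\cdot\mathbf{I}$, Proposition~\ref{pro:properties} shows that $\mu(x)=\omega(\mathbf{A}x+\mathbf{b})\in A_p(\R^n)$ with $C_{p,\mu}=C_{p,\omega}$, so it suffices to prove \eqref{Poincareweighted} for an arbitrary weight $\mu\in A_p(\R^n)$ with every constant depending on $\mu$ only through $C_{p,\mu}$ (besides $n$, $p$, $\hat{r}$ and $\chi$); the stated independence of $\mathbf{A}$ and $\mathbf{b}$ is then automatic. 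I also record, from Proposition~\ref{pro:properties}, that $\mu^{-1/(p-1)}\in\Loneloc(\R^n)$, so that $\int_S\mu^{-1/(p-1)}<\infty$ and $\int_S|\chi|^{p'}\mu^{-1/(p-1)}<\infty$ (recall $\chi$ is continuous on the bounded set $S$).

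Next, fix $\phi\in C_0^\infty(\hat{B})$ with $\phi\ge0$, $\int_{\hat{B}}\phi=1$ and $\|\phi\|_{L^\infty}\lesssim\hat{r}^{-n}$. Because every $x\in S$ sees all of $\hat{B}$, the standard integral representation over star-shaped domains \cite{BrennerScott,DupScott} gives, for $v\in C^\infty(S)\cap W^1_p(\mu,S)$,
\[
 v(x)-\int_{\hat{B}}\phi(y)v(y)\diff y=\int_S\mathbf{K}(x,y)\cdot\nabla v(y)\diff y,\qquad |\mathbf{K}(x,y)|\lesssim|x-y|^{1-n},
\]
where the hidden constant depends only on $n$, $\diam S\approx1$ and $\hat{r}$. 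Splitting $S$ into dyadic annuli centred at $x$ and bounding the average of $|\nabla v|$ on each of them by the Hardy--Littlewood maximal operator $M$, one obtains the pointwise estimate $\bigl|v(x)-\int_{\hat{B}}\phi v\bigr|\lesssim M\bigl(|\nabla v|\mathbf{1}_S\bigr)(x)$ for $x\in S$. Since $\mu\in A_p(\R^n)$, Muckenhoupt's theorem \cite{Muckenhoupt} yields that $M$ is bounded on $L^p(\mu,\R^n)$ with norm controlled by $n$, $p$ and $C_{p,\mu}$, hence
\[
 \Bigl\|v-\int_{\hat{B}}\phi v\Bigr\|_{L^p(\mu,S)}\lesssim\|\nabla v\|_{L^p(\mu,S)};
\]
by the density of $C^\infty(S)\cap W^1_p(\mu,S)$ in $W^1_p(\mu,S)$ (Proposition~\ref{PR:banach}), this inequality then holds for every $v\in W^1_p(\mu,S)$.

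It remains to replace the ball average by the hypothesis $\int_S\chi v=0$. Given such a $v$, set $c:=\int_{\hat{B}}\phi v$. Using $\int_S\chi v=0$ together with $\int_S\chi=\|\chi\|_{L^1(S)}=1$ (the relevant case being $\chi\ge0$) we get $c=\int_S\chi(c-v)$, whence H\"older's inequality with exponents $p,p'$ and weight $\mu$ gives
\[
 |c|\le\Bigl(\int_S|\chi|^{p'}\mu^{-1/(p-1)}\Bigr)^{1/p'}\|v-c\|_{L^p(\mu,S)}.
\]
Multiplying by $\mu(S)^{1/p}$ and enclosing $S$ in a ball $\tilde{B}$ of radius $\approx\diam S\approx1$, the product $\mu(S)^{1/p}\bigl(\int_S\mu^{-1/(p-1)}\bigr)^{1/p'}\le\mu(\tilde{B})^{1/p}\bigl(\int_{\tilde{B}}\mu^{-1/(p-1)}\bigr)^{1/p'}\lesssim C_{p,\mu}^{1/p}$, which follows at once from the $A_p$-condition \eqref{A_pclass} applied to $\tilde{B}$; combined with $\|\chi\|_{L^\infty}<\infty$ this gives $\|c\|_{L^p(\mu,S)}=|c|\,\mu(S)^{1/p}\lesssim\|\nabla v\|_{L^p(\mu,S)}$ with constant depending only on $\chi$, $C_{p,\omega}$ and $\hat{r}$. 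The triangle inequality $\|v\|_{L^p(\mu,S)}\le\|v-c\|_{L^p(\mu,S)}+\|c\|_{L^p(\mu,S)}$ then yields \eqref{Poincareweighted}.

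The only genuinely delicate point will be the first display: one must know that the kernel of the integral representation is bounded by $|x-y|^{1-n}$ with a constant involving \emph{only} $\hat{r}$ and $\diam S\approx1$, and not the finer shape of $S$ — this is exactly where star-shapedness with respect to a ball, rather than mere connectedness, enters. Everything else — the $|x-y|^{1-n}$-to-maximal-function estimate, Muckenhoupt's theorem, and the bookkeeping that keeps all constants expressible through $C_{p,\omega}$ and $\hat{r}$ so that Proposition~\ref{pro:properties} delivers the independence of $\mathbf{A}$ and $\mathbf{b}$ — is quantitative and routine.
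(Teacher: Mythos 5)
Your argument is correct, and the constant bookkeeping works out (everything is controlled by $n$, $p$, $\hat r$, $\|\chi\|_{L^\infty(S)}$ and $C_{p,\omega}=C_{p,\mu}$, so the independence of $\mathbf{A}$ and $\mathbf{b}$ follows from Proposition~\ref{pro:properties}\eqref{v} exactly as you say), but it is a genuinely different route from the paper. The paper argues by duality: it builds $\tilde v = \signum(v)|v|^{p-1}\mu - \bigl(\int_S \signum(v)|v|^{p-1}\mu\bigr)\chi$, which has vanishing mean and lies in $L^{p'}(\mu^{-p'/p},S)$, solves $\DIV\,\mathbf{u}=\tilde v$ with the weighted estimate $\|\nabla\mathbf{u}\|_{L^{p'}(\mu^{-p'/p},S)}\lesssim\|\tilde v\|_{L^{p'}(\mu^{-p'/p},S)}$ from \cite[Theorem~3.1]{DL:10} (this is where star-shapedness with respect to $\hat B$ and $\mu^{-p'/p}\in A_{p'}$ enter), and then writes $\|v\|^p_{L^p(\mu,S)}=\int_S v\tilde v=\int_S\nabla v\cdot\mathbf{u}$ and concludes by H\"older. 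You instead use the Sobolev integral representation over star-shaped domains, the pointwise bound of the truncated Riesz potential by the Hardy--Littlewood maximal function, and Muckenhoupt's theorem, and only afterwards trade the ball average for the $\chi$-average via the $A_p$ condition \eqref{A_pclass} on a ball containing $S$. Your route is more self-contained in that it leans on the maximal-function characterization of $A_p$ (already quoted in the paper) rather than on the weighted right inverse of the divergence, at the price of a two-step mean-swap and a density argument; the paper's duality proof treats a general $\chi$ in one stroke and uses the same machinery (\cite{DL:10}) that underlies related weighted results. One further remark: both your proof and the paper's actually use $\int_S\chi=1$ rather than the stated normalization $\|\chi\|_{L^1(S)}=1$ (the statement is in fact false if $\int_S\chi=0$, as constants then satisfy the constraint), so your parenthetical restriction to $\chi\ge0$ is the same implicit hypothesis the paper makes and not a defect of your argument relative to it.
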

\begin{proof}
Property \eqref{v} of Proposition~\ref{pro:properties} shows that $\mu \in A_p(\R^n)$ and 
$C_{\mu,p} = C_{\omega,p}$. Given $v \in  W^1_p(\mu,S)$, we define
\[
  \tilde{v} = \signum(v) |v|^{p-1}\mu - \left(\int_S \signum(v) |v|^{p-1}\mu\right)\chi.
\]
H{\"o}lder's inequality yields
\begin{equation}
  \int_S \mu |v|^{p-1}  =  \int_S \mu^{1/p'} |v|^{p-1}  \mu^{1/p}  
  \leq  \left( \int_S \mu |v|^p \right)^{1/p'}  \left( \int_S \mu \right)^{1/p}
  \lesssim  \| v\|_{L^p(\mu,S)}^{p-1},
\label{Poincareaux1}
\end{equation}
which implies that $\tilde{v} \in L^1(S)$ and 
$\| \tilde{v} \|_{L^1(S)} \lesssim \| v\|_{L^p(\mu,S)}^{p-1}$.
Notice, in addition, that since $\int_{S} \chi=1$, the function $\tilde{v}$ has vanishing mean value. 

Given $1 < p < \infty$, we define $q = -p'/p$, and we notice that $q + p' = 1$ and $p'(p-1) = p$.
We estimate $\| \tilde{v} \|_{L^{p'}(\mu^{q},S)}$ as follows:
\begin{equation*}
  \begin{aligned}
    \left(\int_{S} \mu^{q}|\tilde{v}|^{p'} \right)^{1/p'} &=  \left(\int_{S} \mu^{q}
    \left| \signum(v) |v|^{p-1}\mu - \left(\int_{S} \signum(v) |v|^{p-1}\mu \right)\chi
    \right|^{p'} \right)^{1/p'}
    \\
    & \leq \left(\int_{S} \mu^{q+p'}|v|^{p'(p-1)}\right)^{1/p'}
    +
    \left(\int_{S} |v|^{p-1}\mu \right)
    \| \chi\|_{L^{p'}(\mu^q,S)}
    \\
    & \lesssim \| v\|_{L^p(\mu,S)}^{p-1},
  \end{aligned}
\label{aux1:2}
\end{equation*}
where we have used \eqref{Poincareaux1} together with the fact that $\mu \in A_p(\R^n)$ implies  
$\mu^q \in \Loneloc(\R^n)$ (see Proposition~\ref{pro:properties} \eqref{i}), whence
$\| \chi\|_{L^{p'}(\mu^q,S)} \leq \|\chi \|_{L^\infty(S)}\mu^q(S)^{1/p'} \lesssim 1$.

Properties $\mu^q \in A_{p'}(\R^n)$, that $S$ is star-shaped with respect to $\hat B$
and $\tilde{v} \in L^{p'}(\mu^{q},S)$ has vanishing mean value, suffice for the existence of a vector field 
$\textbf{u} \in \Wpp(\mu^q,S)$ satisfying
\[
 \DIV ~\bu = \tilde{v},
\]
and, 
\begin{equation}
\label{Fdivweighted}
  \|\nabla  \textbf{u} \|_{ L^{p'}(\mu^q,S)}  
  \lesssim 
\| \tilde{v} \|_{L^{p'}( \mu^q, S )},
\end{equation}
where the hidden constant depends on $C_{p',\mu^q}$ and the radius $r$ of $\hat B$; see \cite[Theorem~3.1]{DL:10}.

Finally, since $\int_{S} \chi v = 0$, the definition of $\tilde{v}$ implies
\begin{equation*}
\label{aux1:1}
  \| v \|^p_{L^p(\mu,S)} = \int_{S} v \tilde{v} +
  \left(\int \signum(v) |v|^{p-1}\mu \right)\int_{S}\chi v =
  \int_{S} v \tilde{v}.
\end{equation*}
Replacing $\tilde{v}$ by $-\DIV~\bu$,
integrating by parts and using \eqref{Fdivweighted}, we conclude
\begin{align*}
\| v \|^p_{L^p(\mu,S)}  
= \int_{S} \nabla v \cdot \bu
& \leq \left( \int_{S} \mu |\nabla v|^p \right)^{1/p}
\left( \int_{S} \mu^q |\bu|^{p'} \right)^{1/p'}
\\
& \lesssim 
\| \nabla v\|_{ L^p(\mu,S)} 
\| \tilde{v} \|_{L^{p'}(\mu^{q},S)}.
\end{align*}
Invoking $\| \tilde{v}\|_{L^{p'}(\mu^q,S)}\lesssim \| v\|_{L^p(\mu,S)}^{p-1}$ yields the desired inequality.
\end{proof}

In section~\ref{sec:int} we construct an interpolation operator based on local averages. Consequently,
the error estimates on an element $T$ depend on the behavior of the function over a so-called
\emph{patch} of $T$, which is not necessarily star shaped. Then, we need to relax the geometric 
assumptions on the domain $S$ and let the vanishing mean property hold just in a
subdomain. The following result is an adaptation of \cite[Corollary 4.4]{NOS}.

\begin{corollary}[weighted Poincar\'e inequality II]
\label{C:Poincareweighted-2}
Let $S=\cup_{i=1}^N S_i \subset\R^{n}$ be a connected domain and each $S_i$ be star-shaped with respect to a 
ball $B_i$. Let $\chi_i\in C^0(\bar S_i)$ and $\mu$ be as in Lemma~\ref{le:Poincareweighted}. If 
$v \in W^1_p(\mu, S)$ and $v_i =\int_{S_i} v\chi_i$, then
\begin{equation}
\label{Poincareweighted-2}
  \|v - v_i\|_{L^p(\mu,S)} \lesssim 
\| \nabla v \|_{L^p(\mu,S)}
\qquad \forall 1\le i\le N,
\end{equation}
where the hidden constant depends on $\{\chi_i\}_{i=1}^N$, the radii $r_i$ of $B_i$, and the amount of overlap 
between the subdomains $\{S_i\}_{i=1}^N$, but is independent of $\mathbf{A}$ and $\mathbf{b}$.
\end{corollary}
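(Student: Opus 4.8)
The plan is to leverage Lemma~\ref{le:Poincareweighted} locally on each star-shaped piece $S_i$ and then propagate the mean-value information through the chain of overlapping subdomains using connectedness. First I would fix an index $i$ and a target index $j$; applying Lemma~\ref{le:Poincareweighted} to $v$ on $S_j$ with the weight functional $\chi_j$ gives $\|v - v_j\|_{L^p(\mu,S_j)} \lesssim \|\nabla v\|_{L^p(\mu,S_j)} \le \|\nabla v\|_{L^p(\mu,S)}$, where the constant depends only on $\chi_j$, $C_{p,\omega}$, and the radius $r_j$. This handles the ``diagonal'' case where the subtracted constant matches the subdomain, so the remaining work is to compare the different constants $v_i$ and $v_j$ to each other.

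Next I would exploit the overlap hypothesis: since $S$ is connected and is a finite union of the $S_k$, there is a chain $S_{i} = S_{k_0}, S_{k_1}, \dots, S_{k_m} = S_j$ in which consecutive members have nonempty intersection with positive (in fact comparable, up to the overlap constant) Lebesgue measure, hence positive $\mu$-measure by the strong doubling property (Proposition~\ref{pro:double}). On such an overlap $S_{k_\ell} \cap S_{k_{\ell+1}}$, I would estimate $|v_{k_\ell} - v_{k_{\ell+1}}|$ by writing both as spatial averages against $\chi$'s, inserting $v$ pointwise, and bounding
\begin{equation*}
  |v_{k_\ell} - v_{k_{\ell+1}}| \, \mu(S_{k_\ell}\cap S_{k_{\ell+1}})^{1/p}
  \lesssim \|v - v_{k_\ell}\|_{L^p(\mu,S_{k_\ell})} + \|v - v_{k_{\ell+1}}\|_{L^p(\mu,S_{k_{\ell+1}})},
\end{equation*}
using H\"older's inequality in $L^p(\mu)$ versus $L^{p'}(\mu^{-1/(p-1)})$ together with $\mu^{-1/(p-1)} \in \Loneloc$ (Proposition~\ref{pro:properties}(i)) to control the $\chi$-factors. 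Each right-hand side term is then controlled by $\|\nabla v\|_{L^p(\mu,S)}$ via the diagonal case. Summing the telescoping chain $v_i - v_j = \sum_\ell (v_{k_\ell} - v_{k_{\ell+1}})$ over the at most $N$ links yields $|v_i - v_j| \lesssim \|\nabla v\|_{L^p(\mu,S)}$, with a constant absorbing $N$, the $\chi_k$'s, the radii $r_k$, the overlap measures, and $C_{p,\omega}$.

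Finally I would assemble the estimate: for the fixed $i$,
\begin{equation*}
  \|v - v_i\|_{L^p(\mu,S)}^p = \sum_{j=1}^N \|v - v_i\|_{L^p(\mu,S_j)}^p
  \lesssim \sum_{j=1}^N \Big( \|v - v_j\|_{L^p(\mu,S_j)}^p + |v_i - v_j|^p \, \mu(S_j) \Big),
\end{equation*}
where I have used $|a+b|^p \lesssim |a|^p + |b|^p$ and the fact that the $S_j$ cover $S$ (so summing over $j$ only inflates the norm by the overlap multiplicity). Both terms on the right are $\lesssim \|\nabla v\|_{L^p(\mu,S)}^p$ by the previous two paragraphs, with the $\mu(S_j)$ factors finite since $\mu \in \Loneloc$. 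Taking $p$-th roots gives \eqref{Poincareweighted-2}.

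The main obstacle I anticipate is the bookkeeping of constants and the overlap geometry: making precise the statement that connectedness of a finite union of star-shaped sets forces a chain with overlaps of controlled (positive) $\mu$-measure, and ensuring the constant genuinely depends only on the listed quantities and not on $\mathbf{A}$, $\mathbf{b}$. The translation/dilation invariance is inherited cleanly from Lemma~\ref{le:Poincareweighted}, since every ingredient (the diagonal Poincar\'e constant, $C_{p,\omega}$, the H\"older steps) is either invariant or appears only through $\mu$-ratios, but one must be careful that the $\mu$-measures $\mu(S_j)$ and $\mu(S_{k_\ell}\cap S_{k_{\ell+1}})$ enter only as \emph{ratios} against each other and against $\mu(S_j)^{1/p}$, so that the scaling factor $a^n$ cancels; this is exactly what the strong doubling property is for. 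Since the corollary statement presumes $\diam S \approx 1$ (inherited from the lemma's hypothesis on each $S_i$), no rescaling argument is needed here, which keeps this step routine modulo the care just described.
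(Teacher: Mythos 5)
Your argument is correct and follows essentially the same route as the paper: the diagonal estimate from Lemma~\ref{le:Poincareweighted} on each star-shaped piece, comparison of the constants $v_i$, $v_j$ through the $\mu$-measure of the overlaps (which is exactly the ratio $\mu(S_j)/\mu(D)$ appearing in the paper's sketch), and chaining through the connected union---the paper merely treats two subdomains explicitly and cites \cite[Theorem 7.1]{DupScott} for the general chaining you spell out. Note that, as in the paper, the positive $\mu$-measure of consecutive overlaps is an implicit hypothesis encoded in the statement's ``amount of overlap,'' not a consequence of connectedness alone, so your flagged concern is resolved by the hypotheses rather than by an extra argument.
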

\begin{proof}
This is an easy consequence of Lemma~\ref{le:Poincareweighted} and \cite[Theorem 7.1]{DupScott}.
For completeness, we sketch the proof. It suffices to deal with two subdomains $S_1,S_2$ and the overlapping 
region $D = S_1\cap S_2$. We start from
\[
  \|v - v_1\|_{L^p(\mu,S_2)} \le \|v - v_2\|_{L^p(\mu,S_2)} + \|v_1-v_2\|_{L^p(\mu,S_2)}.
\]
Since $v_1$ and $v_2$ are constant
\[
  \|v_1 - v_2\|_{L^p(\mu,S_2)} = \left( \frac{\mu(S_2)}{\mu(D)} \right)^{1/p} \|v_1 - v_2\|_{L^p(\mu,D)},
\]
which together with
\[
  \|v_1 - v_2\|_{L^p(\mu,D)} \leq \|v-v_1\|_{L^p(\mu,S_1)} + \|v-v_2\|_{L^p(\mu,S_2)},
\] 
and \eqref{Poincareweighted} imply 
$\|v - v_1\|_{L^p(\mu,S_2)} \lesssim 
\|\nabla v\|_{L^p(\mu,{S_1 \cup S_2})] }
$.
This and \eqref{Poincareweighted} give \eqref{Poincareweighted-2} for $i=1$,
with a stability constant depending on the ratio  $\frac{\mu(S_2)}{\mu(D)}$.
\end{proof}

\section{Approximation theory in weighted Sobolev spaces}
\label{sec:intweighted}

In this section, we introduce an averaged version of the Taylor polynomial
and study its approximation properties in Muckenhoupt weighted Sobolev spaces.
Our results are optimal and are used to obtain error estimates for the quasi-interpolation operator defined in section~\ref{sec:int} on 
simplicial and rectangular discretizations. 
The interpolation operator is built on local averages over stars, and so is similar to the one introduced in \cite{DupScott}. The main difference
is that it is directly defined on the given mesh instead of using a reference element.
This idea is fundamental in order to relax the regularity assumptions on the elements, which is what allows us to derive the anisotropic estimates on rectangular elements presented in \S~\ref{subsec:intQ}.

\subsection{Discretization}
\label{sub:sec:preliminars}

We start with some terminology
and describe the construction of the underlying finite element spaces. 
In order to avoid technical 
difficulties we shall assume $\partial \Omega$ is polyhedral. We denote by $\T = \{T\}$ a partition, or mesh, of 
$\Omega$ into elements $T$ (simplices or cubes) such that
\[
  \bar\Omega = \bigcup_{T \in \T} T, \qquad
  |\Omega| = \sum_{T \in \T} |T|.
\]
The mesh $\T$ is assumed to be conforming or compatible: the intersection of any two elements 
is either empty or a common lower dimensional element. We denote by $\Tr$ a collection of conforming meshes, which
are \emph{shape regular} i.e., there 
exists a constant $\sigma > 1$ such that, for all $\T \in \Tr,$
\begin{equation}
\label{shape_reg}
 \max \left\{ \sigma_T : T \in \T \right\} \leq \sigma,
\end{equation}
where $\sigma_T = h_T/\rho_T$ is the shape coefficient of $T$. In the case of simplices, 
$h_T = \diam(T)$ and $\rho_T$ is the diameter of the sphere 
inscribed in $T$; see, for instance, \cite{BrennerScott}. For the definition of
$h_T$ and $\rho_T$ in the case of $n$-rectangles see \cite{Ciarletbook}.

In \S~\ref{subsec:intQ}, we consider
rectangular discretizations of the domain $\Omega = (0,1)^n$ which satisfy 
a weaker regularity assumption and thus allow 
for anisotropy in each coordinate direction (\cf \cite{DL:05}).

Given a mesh $\T \in \Tr$,
we define the finite element
space of continuous
piecewise polynomials of degree $m \geq 1$
\begin{equation}
  \V(\T) = \left\{
            W \in \C^0( \bar \Omega): W_{|T} \in \mathcal{P}(T) \ \forall T \in \T, \
            W_{|\partial \Omega} = 0
          \right\},
\label{eq:defFESpace}
\end{equation}
where, for a simplicial element $T$, $\mathcal{P}(T)$ corresponds to $\mathbb{P}_m$ --- the space of polynomials of total degree at most $m$.
If $T$ is an $n$-rectangle, then 
$\mathcal{P}(T)$ stands for $\mathbb{Q}_m$ --- the space of polynomials of degree not larger than $m$ in each variable.

Given an element $T \in \T$, we denote by $\N(T)$ and $\Nin(T)$ the set of 
nodes and interior nodes of $T$, respectively. We
set $\N(\T) := \cup_{T \in \T} \N(T) $ and $\Nin(\T) := \N(\T) \cap \partial \Omega$.
Then, any discrete function $V \in \V(\T)$ is characterized by its 
nodal values on the set $\Nin(\T)$. 
Moreover, the functions $\phi_z \in \V(\T)$, $z \in \Nin(\T)$, such that
$\phi_z(y)=\delta_{yz}$ for all $y \in \N(\T)$ are 
the canonical basis of $\V(\T)$, and
\[
 V = \sum_{z \in \Ninn(\T)} V(z) \phi_z.
\]
The functions 
$\{ \phi_z\}_{z \in \Ninn(\T)}$ are the so called \emph{shape functions}.

Given $z \in \N(\T)$, the \emph{star} or patch around $z$ is
$
  S_{z} := \bigcup_{z \in T} T,
$
and, for $T \in \T$, its \emph{patch} is
$
  S_T := \bigcup_{z \in T} S_z.
$
For each $z \in \N(\T)$, we define 
$h_{z} := \min\{h_{T}: z \in T\}$.

\subsection{The averaged interpolation operator}
\label{sub:sec:average}

We now develop an approximation theory in Muckenhoupt weighted Sobolev spaces,
which is instrumental in section \ref{sec:int}. We define
an averaged Taylor polynomial, built on local averages
over stars and thus well defined for $L^p(\omega,\Omega)$-functions.
Exploiting the weighted Poincar\'e inequality derived in section~\ref{sec:poincare}, 
we show optimal error estimates for constant and linear approximations.
These results are the basis to extend these estimates to 
any polynomial degree via a simple induction argument in section 
\ref{sub:sec:induction}.

Let $\psi \in C^{\infty}(\R^{n})$ be such that $\int \psi = 1$ and $\supp \psi \subset B$, where 
$B$ denotes the ball in $\R^n$ of radius $r = r(\sigma)$ and centered at zero.
For $z \in \Nin(\T)$, we define the rescaled smooth functions
\begin{equation}
\label{psi_z}
\psi_{z}(x) =  \frac{(m+1)^n}{ h_{z}^n } \psi
      \left(\frac{(m+1)(z-x)}{h_{z}}\right), 
\end{equation}
where $m\geq 0$ is the polynomial degree. The scaling of $\psi_z$ involving the factor $m+1$ guarantees the property
\[
 \supp \psi_{z} \subset S_{z}
\]
for all nodes $z \in \Nin(\T)$ (not just the interior vertices of $\T$) provided $r$ is suitable chosen. This is because the distance from $z$ to $\partial S_z$ is proportional to $h_z/(m+1)$ for shape regular meshes.

Given a smooth function $v$, we denote by 
$P^m v(x,y)$ the Taylor polynomial of order $m$ in the variable $y$ about the point 
$x$, \ie
\begin{equation}
\label{taylor}
 P^m v (x,y)  = \sum_{| \alpha | \leq m} \frac{1}{\alpha!} D^{\alpha}v(x)(y-x)^{\alpha}.
\end{equation}
For $z \in \Nin(\T)$, and $v \in W^m_p(\omega,\Omega)$, 
we define the corresponding \emph{averaged} Taylor polynomial
of order $m$ of $v$ about the node $z$ as 
\begin{equation}
\label{averaged_taylor}
  Q^m_{z} v(y) = \int P^mv(x,y) \psi_{z}(x) \diff x.
\end{equation}
Integration by parts shows that $Q^m_{z} v$ is well-defined for
functions in $L^1(\Omega)$ \cite[Proposition 4.1.12]{BrennerScott}.
Proposition~\ref{pro:loc_int} then
allows us to conclude that \eqref{averaged_taylor} is well defined for
$v \in L^p(\omega,\Omega)$. 
Since $\supp \psi_{z} \subset S_{z}$, 
the integral appearing in \eqref{averaged_taylor} can be also written over $S_{z}$.
Moreover, we have the following properties of $Q_{z}^m v$:
\begin{enumerate}[$\bullet$]
\item $Q^m_{z} v$ is a polynomial of degree less or equal than $m$ in
the variable $y$ (\cf \cite[Proposition 4.1.9]{BrennerScott}).
\item $Q^m_{z} v = Q^m_{z} Q^m_{z} v$, \ie $Q^m_{z}$ is
    invariant over $\mathbb{P}_m$.
\item For any $\alpha$
such that $|\alpha| \leq m $,
\begin{equation}
\label{prop_der}
 D^{\alpha} Q_{z}^m v = Q_{z}^{m-|\alpha|} D^{\alpha} v
\qquad \forall v \in W_1^{|\alpha|}(B),
\end{equation}
(\cf \cite[Proposition 4.1.17]{BrennerScott}).
As a consequence of $\omega \in A_p(\R^n)$, together with Proposition \ref{pro:loc_int}, we have that \eqref{prop_der} holds for 
$v$ in $W_1^{|\alpha|}(\omega,B)$.
\end{enumerate}

The following stability result is important in the subsequent analysis.

\begin{lemma}[stability of $Q_z^m$]
\label{LM:Qm_stab}
Let $\omega \in A_{p}(\R^n)$ and $z \in \Nin(\T)$. If $v \in W_p^{k}(\omega,S_z)$, 
with $ 0 \leq k \leq m$, we have the following stability result
\begin{equation}
\label{Qm_stab}
 \| Q_z^m v\|_{L^{\infty}(S_z)} \lesssim h_z^{-n }\| 1 \|_{L^{p'}(\omega^{-p'/p},S_z)}
\sum_{ l = 0}^{k} h_z^{l}
|  v |_{W_p^{l}(\omega,S_z)}.
\end{equation}
\end{lemma}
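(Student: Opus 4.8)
The plan is to estimate the averaged Taylor polynomial $Q_z^m v$ pointwise on $S_z$ by exploiting the explicit formula \eqref{averaged_taylor} together with the smoothness and support properties of the kernel $\psi_z$. First I would recall that, since $Q_z^m v$ is a polynomial of degree at most $m$ on the fixed-geometry set $S_z$ (which, after the affine rescaling $x \mapsto (x-z)/h_z$, has diameter $\approx 1$), by equivalence of norms on the finite-dimensional polynomial space it suffices to bound the $L^1$-average of $Q_z^m v$ over $S_z$, or even just $\fint_{S_z} |Q_z^m v|$, and keep track of the scaling in $h_z$. Alternatively, and more directly, I would integrate by parts in \eqref{averaged_taylor} to rewrite $Q_z^m v(y)$ as $\int_{S_z} v(x)\, k(x,y)\diff x$ where the kernel $k$ is built from derivatives of $\psi_z$ up to order $m$; this is exactly the representation used in \cite[Proposition 4.1.12]{BrennerScott}. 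The bounds $|D^\alpha \psi_z| \lesssim h_z^{-n-|\alpha|}$ for $|\alpha|\le m$, valid because $\psi$ is a fixed smooth function and the mesh is shape regular, then give $|k(x,y)| \lesssim \sum_{l=0}^{m} h_z^{-n-l}|y-x|^{?}$-type terms; restricting to $0\le k\le m$ and using that $|y-x|\lesssim h_z$ on $S_z$, one gets a kernel controlled by $h_z^{-n}$ times a polynomial expression.

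The cleanest route, and the one I would actually write, uses the classical identity: after integration by parts, $Q_z^m v(y) = \int_{S_z} v(x)\, \psi_z(x)\diff x + (\text{terms involving } D^\alpha v)$. More precisely, one shows $Q_z^m v(y) = \sum_{|\alpha|\le k} \frac{(y-\cdot)^\alpha}{\alpha!}$-weighted integrals of $D^\alpha v$ against $\psi_z$ for a function $v\in W_p^k$ with $k\le m$ (this is the content of the averaged-Taylor machinery in Brenner–Scott, §4.1, and only requires $k\le m$, not $k=m+1$). Then for each term, H\"older's inequality in the weighted–unweighted duality pairing gives
\begin{equation*}
\left| \int_{S_z} D^\alpha v(x)\, (y-x)^\alpha \psi_z(x)\diff x \right|
\lesssim h_z^{|\alpha|}\, h_z^{-n}\, \| 1\|_{L^{p'}(\omega^{-p'/p},S_z)}\, \| D^\alpha v\|_{L^p(\omega,S_z)},
\end{equation*}
using $|(y-x)^\alpha|\lesssim h_z^{|\alpha|}$, $\|\psi_z\|_{L^\infty}\lesssim h_z^{-n}$, and $|S_z|\lesssim h_z^n$ absorbed into the $p'$-norm of the constant function $1$ against the dual weight $\omega^{-p'/p}$. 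Summing over $|\alpha|=l$ and $l=0,\dots,k$ yields \eqref{Qm_stab}. The role of $\omega\in A_p(\R^n)$ here is precisely to guarantee, via Proposition~\ref{pro:properties}\eqref{i}, that $\omega^{-p'/p}=\omega^{-1/(p-1)}\in\Loneloc(\R^n)$ so that $\| 1\|_{L^{p'}(\omega^{-p'/p},S_z)}<\infty$ and the pairing makes sense; this is also what makes $Q_z^m v$ well defined on $L^p(\omega,\Omega)$ in the first place, as noted after \eqref{averaged_taylor}.

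The main obstacle I anticipate is not any single estimate but rather setting up the correct integration-by-parts representation of $Q_z^m v$ so that only derivatives $D^\alpha v$ with $|\alpha|\le k$ appear (and with the right powers of $(y-x)$), since $v$ is only assumed to lie in $W_p^k$ with $k$ possibly strictly less than $m$; one must be careful that the "missing" derivatives $D^\alpha v$ for $k<|\alpha|\le m$ do not show up. This is handled by the identity \eqref{prop_der} and the observation that $Q_z^m v = Q_z^m Q_z^k v$ combined with $Q_z^k$ mapping into $\mathbb{P}_k$; equivalently, one truncates the Taylor expansion at order $k$ and estimates the integral form of the remainder. Once that representation is in hand, everything else is a routine application of H\"older's inequality and the scaling relations $h_z\approx h_T$ for $z\in T$ coming from shape regularity \eqref{shape_reg}, so I would state those scalings once and not belabor them.
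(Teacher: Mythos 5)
Your main route coincides with the paper's proof: expand $Q_z^m v$ via \eqref{averaged_taylor}, integrate by parts the terms with $k<|\alpha|\le m$ so that only derivatives of $v$ of order at most $k$ remain, then use $|y-x|\lesssim h_z$ and H\"older against $\| 1\|_{L^{p'}(\omega^{-p'/p},S_z)}$, the $A_p$ condition entering only through the local integrability of $\omega^{-1/(p-1)}$. So the proposal is essentially correct and is the same argument.

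Two of your side justifications should be discarded, however, since as stated they are wrong (though harmless, because the direct integration by parts already does the job). First, the identity $Q_z^m v = Q_z^m Q_z^k v$ is false in general: $Q_z^k v\in\mathbb{P}_k\subset\mathbb{P}_m$ and $Q_z^m$ is invariant on $\mathbb{P}_m$, so $Q_z^m Q_z^k v = Q_z^k v\neq Q_z^m v$. Second, ``truncating the Taylor expansion at order $k$ and estimating the integral form of the remainder'' would require derivatives of $v$ of order $k+1$, which are not available under the hypothesis $v\in W_p^k(\omega,S_z)$. Also, after integration by parts the kernels are not simply $(y-x)^\alpha\psi_z(x)$ with $|\alpha|\le k$: they contain derivatives $D^\gamma\psi_z$, bounded by $h_z^{-n-|\gamma|}$ (this uses $\psi_z=0$ on $\partial S_z$ and the uniform bounds on $D^\gamma\psi$, as in the paper). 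The scaling nevertheless comes out as you claim, since for a term originally carrying $D^\alpha v$ with $|\alpha|=j>k$ each of the $j-k$ transferred derivatives either lowers the power of $(y-x)$ or falls on $\psi_z$, and in either case the kernel bound is $h_z^{\,k-n}$, which is exactly the factor $h_z^{l}h_z^{-n}$ with $l=k$ required in \eqref{Qm_stab}; the untouched terms with $|\alpha|=l\le k$ give the remaining factors $h_z^{\,l-n}$.
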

\begin{proof}
Using the definition of the averaged Taylor polynomial \eqref{averaged_taylor}, 
we arrive at
\[
\| Q_z^m v\|_{L^{\infty}(S_z)}  \lesssim 
\sum_{|\alpha| \leq m} \left\| \int_{S_z}  D^\alpha v(x)
 (y-x)^{\alpha} \psi_z(x) \diff x
 \right\|_{L^{\infty}(S_z)} .
\]
This implies estimate \eqref{Qm_stab} if $k=m$. Otherwise,
integration by parts on the higher derivatives
$D^{\alpha} v$ with $k < |\alpha| \leq m$, $\psi_z = 0$
on $\partial S_z$, the fact that $D^{\alpha} \psi$ is uniformly bounded on $\mathbb{R}^n$, the estimate $|y-x| \lesssim h_z$ for all $x,y
\in S_z$, together with H{\"o}lder's inequality, yield \eqref{Qm_stab}.
\end{proof}

Given $\omega \in A_p(\R^n)$ and $v \in W_p^{m+1}(\omega,\Omega)$ with $m \geq 0$,
in the next section we derive approximation properties of the averaged
Taylor polynomial $Q_{z}^m v$
in the weighted $W_p^k(\omega,\Omega)$-norm, with $0 \leq k \leq m$, via 
a weighted Poincar\'e inequality and a simple induction argument.
Consequently, we must first 
study the approximation properties of $Q_{z}^0 v$,
the weighted average of $v \in L^p(\omega,\Omega)$,
which for $z \in \Nin(\T)$ reads
\begin{equation}
\label{p0average}
    Q^0_{z}v = \int_{S_z} v(x) \psi_{z}(x) \diff x.
\end{equation} 

\subsection{Weighted $L^p$-based error estimates}
\label{sub:sec:Lp_estimates}

We start by adapting
the proofs of \cite[Lemma~2.3]{DL:05} and \cite[Lemma 4.5]{NOS} to obtain local approximation 
estimates in the weighted $L^p$-norm for the polynomials $Q^0_{z}v$ and $Q^1_{z}v$. 

\begin{lemma}[weighted $L^p$-based error estimates]
\label{LM:approximation}
Let $z \in \Nin(\T)$. If $v \in W^1_p(\omega,S_z)$, then we have
\begin{equation}
\label{v-Q_vLp}
  \|v - Q^0_{z}v \|_{L^p(\omega,S_z)}
  \lesssim   h_{z}  
  \| \nabla v\|_{L^p(\omega,S_z)}.
\end{equation}
If $v \in W^2_p(\omega,S_z)$ instead, the following estimate holds
\begin{equation}
\label{v-v_vW1p}
  \|\partial_{x_j}(v - Q^1_{z}v) \|_{L^p(\omega,S_z)}
  \lesssim h_{z} 
  \| \partial_{x_j}\nabla v\|_{L^p(\omega,S_z) },
  \end{equation}
for $j=1,\dots,n$. In both inequalities, the hidden constants depend only on $C_{p,\omega}$, $\sigma$ and $\psi$.
\end{lemma}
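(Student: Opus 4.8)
The plan is to derive both estimates from the two weighted Poincar\'e inequalities of Section~\ref{sec:poincare} combined with the invariance properties of the averaged Taylor polynomial. For the first estimate \eqref{v-Q_vLp}, I would write $Q^0_z v = \int_{S_z} v\,\psi_z$, so that $Q^0_z v$ is exactly the $\psi_z$-weighted average of $v$ over $S_z$. Since $\psi_z \geq 0$ and $\int \psi_z = 1$, the function $v - Q^0_z v$ satisfies $\int_{S_z}(v - Q^0_z v)\psi_z = 0$, i.e.\ it has vanishing $\psi_z$-mean. The star $S_z$ is, up to the scaling $x \mapsto h_z^{-1}(x-z)$, a fixed domain of diameter $\approx 1$ that is a finite union of star-shaped subdomains (one per element of the patch); by shape regularity the number of pieces and the relevant geometric constants are controlled by $\sigma$. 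Applying Corollary~\ref{C:Poincareweighted-2} (or Lemma~\ref{le:Poincareweighted} if $S_z$ is itself star-shaped) to the rescaled function on the rescaled domain, with $\mu$ the correspondingly rescaled weight and $\chi$ (a rescaling of) $\psi_z$, gives $\|v - Q^0_z v\|_{L^p(\omega,S_z)} \lesssim \|\nabla v\|_{L^p(\omega,S_z)}$ on the reference configuration; undoing the dilation produces the factor $h_z$ on the right-hand side. Property \eqref{v} of Proposition~\ref{pro:properties} is what guarantees the $A_p$-constant (hence the hidden constant) is unchanged under this dilation.

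For the second estimate \eqref{v-v_vW1p}, I would exploit the commutation property \eqref{prop_der}: for $|\alpha| = 1$, say $\alpha = e_j$, we have $\partial_{x_j} Q^1_z v = Q^0_z(\partial_{x_j} v)$, valid for $v \in W_1^1(\omega, S_z) \supset W_p^2(\omega,S_z)$ by the extension of \eqref{prop_der} to $A_p$-weights noted after that formula. Hence
\[
  \partial_{x_j}(v - Q^1_z v) = \partial_{x_j} v - Q^0_z(\partial_{x_j} v),
\]
which is precisely the error of the constant averaging applied to the function $\partial_{x_j} v \in W^1_p(\omega,S_z)$. Applying the already-established estimate \eqref{v-Q_vLp} to $\partial_{x_j} v$ in place of $v$ yields
\[
  \|\partial_{x_j}(v - Q^1_z v)\|_{L^p(\omega,S_z)} \lesssim h_z \|\nabla(\partial_{x_j} v)\|_{L^p(\omega,S_z)},
\]
which is \eqref{v-v_vW1p}.

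\textbf{Main obstacle.} The bulk of the work is in the scaling argument for the first estimate: one must check carefully that the dilation $x \mapsto h_z^{-1}(x - z)$ maps $S_z$ onto a domain whose geometric parameters (number of star-shaped pieces, inner-ball radii $r_i$, overlap constants) are bounded purely in terms of the shape-regularity constant $\sigma$, so that the hidden constant in Corollary~\ref{C:Poincareweighted-2} is uniform over the mesh; and that the rescaled weight $\mu(x) = \omega(h_z x + z)$ is covered by the hypothesis of that corollary with the \emph{same} $A_p$-constant $C_{p,\omega}$, which is exactly the content of Proposition~\ref{pro:properties}\eqref{v}. One must also verify that the rescaled test function $\chi$ (built from $\psi$) is continuous, normalized in $L^1$ of the reference domain, and has the required regularity — all immediate from $\psi \in C^\infty$, $\int\psi = 1$. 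Tracking the power of $h_z$ through the change of variables in both the $L^p(\omega)$-norm and the gradient is routine bookkeeping. Once \eqref{v-Q_vLp} is in hand, the second estimate is essentially free, modulo invoking \eqref{prop_der}.
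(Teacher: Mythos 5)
Your proposal is correct and follows essentially the paper's own argument: for \eqref{v-Q_vLp} the paper likewise rescales $S_z$ by $x\mapsto (z-x)/h_z$, uses the vanishing $\psi$-mean of $\bar v-\bar Q^0\bar v$, invokes Proposition~\ref{pro:properties}\eqref{v} to keep the $A_p$-constant, and applies the weighted Poincar\'e inequality of Lemma~\ref{le:Poincareweighted} before scaling back. For \eqref{v-v_vW1p} your shortcut via \eqref{prop_der}, i.e.\ $\partial_{x_j}(v-Q^1_zv)=\partial_{x_j}v-Q^0_z\partial_{x_j}v$ followed by \eqref{v-Q_vLp} applied to $\partial_{x_j}v$, is equivalent to the paper's step of checking the vanishing $\psi$-mean of $\partial_{\bar x_j}(\bar v-\bar Q^1\bar v)$ and reusing Lemma~\ref{le:Poincareweighted}, and is indeed the identity the paper itself exploits in Lemma~\ref{LM:double_approximation}.
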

\begin{proof}
Define the mapping $\Fm_z: x \mapsto \bar{x}$ by
\[
  \bar{x} = \frac{z-x}{h_{z}},
\]
the star $\bar{S}_{z} = \Fm_z(S_z)$ and the function $\bar{v}(\bar x) = v(x)$.
Set
$
  \bar{Q}^0\bar{v}= \int\bar{v} \psi \diff \bar{x},
$
where $\psi$ is the smooth function introduced in section~\ref{sub:sec:average}.

Notice that $\supp \psi \subset \bar{S}_{z}$. Consequently, in the definition of $\bar Q^0 \bar v$, 
integration takes place over $\bar{S}_{z}$ only. Using the mapping $\Fm_z$, we have
\[
Q^0_{z}v = \int_{S_{z}} v \psi_z \diff x = 
\int_{\bar{S}_{z}} \bar{v} \psi \diff \bar{x}   = 
\bar{Q}^0\bar{v},
\]
and, since $\int_{\bar{S}_z} \psi \diff \bar{x} = 1$,
\begin{equation}
\label{int=0}
  \int_{\bar{S}_{z}} (\overline{v} - \bar{Q}^0\bar{v})\psi \diff \bar{x} =
  \int_{\bar{S}_{z}}\bar{v} \psi\diff \bar{x} - \bar{Q}^0\bar{v}  = 0.
\end{equation}

Define the weight $\bar{\omega}_{z} = \omega \circ \Fm_{z}^{-1}$. In light of property \eqref{v} in 
Proposition~\ref{pro:properties} we have $\bar{\omega}_{z} \in A_p(\R^n)$ and  
$C_{p,\bar{\omega}_{z}}=C_{p,\omega}$. Changing variables we get
\begin{equation}
\label{changevar1}
  \int_{S_{z}} \omega | v - Q^0_{z} v |^p \diff x=
  h_{z}^n 
  \int_{\bar{S}_{z}} \bar{\omega}_{z}
  | \bar{v} - \bar{Q}^0\bar{v} |^p \diff \bar{x} .
\end{equation}

As a consequence of the shape regularity assumption \eqref{shape_reg}, 
$\diam \bar{S}_{z} \approx 1$. Then,
in view of \eqref{int=0}, we can apply Lemma~\ref{le:Poincareweighted}
to $\bar{v} - \bar{Q}^0\bar{v}$ over $S = \bar{S}_{z}$, with $\mu = \bar{\omega}_{z}$
and $\chi = \psi$, to conclude
\[
  \| \bar{v} - \bar{Q}^0\bar{v} \|_{L^p (\bar{\omega}_{z},\bar{S}_{z})} \lesssim
  \| \bar\nabla \bar{v} \|_{ L^p(\bar{\omega}_{z},\bar{S}_{z}) },
\]
where the hidden constant depends only on $\sigma$, $C_{p,\bar\omega_z}$ and $\psi$.
Inserting this estimate into \eqref{changevar1} and changing variables with $\F_z^{-1}$ to get back to $\bar{S}_{z}$
we get \eqref{v-Q_vLp}. 

In order to prove \eqref{v-v_vW1p}, we modify $\mathcal{F}_z$ and $\bar{S}_{z}$ appropriately and define
\[
  \bar{Q}^1 \bar v(\bar{y}) = \int_{\bar{S}_{z}}
  \left( \bar{v}(\bar{x}) + \bar{\nabla} \bar{v}(\bar{x})\cdot (\bar{y} - \bar{x} ) \right) \psi(\bar{x}) \diff \bar{x},
\]
We observe that $Q^1_{z}v(y) = \bar{Q}^1\bar v(\bar{y})$,
where $Q^1_{z}v$ is defined by \eqref{averaged_taylor}. Since
$
  \partial_{\bar{y}_i} \bar{Q}^1\bar v(\bar{y}) =
  \int_{\bar{S}_{z}} \partial_{\bar{x}_i} \bar{v}(\bar{x})\psi(\bar{x}) \diff \bar{x}
$
is constant for $i \in \{1,\cdots, n\}$,
we have the vanishing mean value property
\[
  \int_{\bar{S}_{z}} \partial_{\bar{x}_i}
      \bigg( \bar{v}(\bar x)-\bar{Q}^1\bar v(\bar{x})
      \bigg) \psi(\bar x) \diff \bar x = 0.
\]
This, together with Lemma \ref{le:Poincareweighted}, leads to \eqref{v-v_vW1p}.
\end{proof}

The following result is an optimal error estimate in the $L^p$-weighted norm
for the averaged Taylor polynomial $Q_{z}^1v$, which is instrumental
to study $Q_{z}^m v$ ($m \geq 0$).

\begin{lemma}[weighted $L^p$-based error estimate for $Q_{z}^1$]
\label{LM:double_approximation}
Let $z \in \Nin(\T)$. If $v \in W^2_p(\omega,S_z)$, then
the following estimate holds
\begin{equation}
\label{v-Q_1Lp}
  \|v - Q^1_{z} v \|_{L^p(\omega,S_z)}
  \lesssim h_{z}^2 | v |_{W^2_p(\omega,S_z)},
  \end{equation}
where the hidden constant depends only on $C_{p,\omega}$, $\sigma$ and $\psi$.
\end{lemma}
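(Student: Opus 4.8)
The plan is to combine the first-order derivative estimate \eqref{v-v_vW1p} with the zeroth-order estimate \eqref{v-Q_vLp}, using the triangle inequality to split the error $v - Q^1_z v$ into a piece controlled by the gradient of $v - Q^1_z v$ and a piece that is itself an averaged polynomial of a function whose gradient is small. Concretely, write $w = v - Q^1_z v$. First I would apply the Poincar\'e-type argument: since $Q^0_z$ reproduces constants and, more to the point, $w - Q^0_z w$ has vanishing $\psi_z$-average, Lemma~\ref{LM:approximation} (the estimate \eqref{v-Q_vLp} applied to $w$) gives
\[
  \| w - Q^0_z w \|_{L^p(\omega,S_z)} \lesssim h_z \| \nabla w \|_{L^p(\omega,S_z)}.
\]

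Next I would estimate $\| \nabla w \|_{L^p(\omega,S_z)}$ componentwise. Using the commutation property \eqref{prop_der}, $\partial_{x_j} Q^1_z v = Q^0_z \partial_{x_j} v$, so $\partial_{x_j} w = \partial_{x_j} v - Q^0_z \partial_{x_j} v$; but actually the cleaner route is to invoke \eqref{v-v_vW1p} directly, which already states
\[
  \| \partial_{x_j}(v - Q^1_z v) \|_{L^p(\omega,S_z)} \lesssim h_z \| \partial_{x_j} \nabla v \|_{L^p(\omega,S_z)} \lesssim h_z |v|_{W^2_p(\omega,S_z)}.
\]
Summing over $j$ yields $\| \nabla w \|_{L^p(\omega,S_z)} \lesssim h_z |v|_{W^2_p(\omega,S_z)}$, hence $\| w - Q^0_z w \|_{L^p(\omega,S_z)} \lesssim h_z^2 |v|_{W^2_p(\omega,S_z)}$.

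It then remains to control $\| Q^0_z w \|_{L^p(\omega,S_z)}$, i.e. $\| Q^0_z(v - Q^1_z v) \|_{L^p(\omega,S_z)}$. Here I would use the stability of the averaging operator (Lemma~\ref{LM:Qm_stab} with $m = k = 0$, or directly Jensen/H\"older on \eqref{p0average}) together with the invariance $Q^0_z Q^1_z v = Q^1_z v$ evaluated appropriately — more carefully, since $Q^0_z$ applied to a function produces a constant, and since $Q^1_z$ is invariant on $\mathbb{P}_1 \supset \mathbb{P}_0$, one gets $Q^0_z(v - Q^1_z v)$ is a constant whose size is bounded by $h_z^{-n}\|1\|_{L^{p'}(\omega^{-p'/p},S_z)}\|v - Q^1_z v\|_{\ldots}$ — which is circular. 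The non-circular way: observe $Q^0_z(v - Q^1_z v) = Q^0_z v - Q^0_z Q^1_z v$; since $Q^1_z v$ is a first-degree polynomial, $Q^0_z Q^1_z v$ equals the value of the averaged constant, and one checks directly from the definitions \eqref{averaged_taylor}, \eqref{p0average} that $Q^0_z(v - Q^1_z v)$ can be rewritten, after integrating by parts, as an integral involving $\nabla v - (\text{its average})$ against a smooth kernel, which is again $O(h_z)$ in the gradient seminorm — but the slickest argument is simply: $\|Q^0_z(v-Q^1_z v)\|_{L^p(\omega,S_z)} \le \|Q^0_z(v-Q^1_z v) - (v - Q^1_z v)\|_{L^p(\omega,S_z)} + \|v - Q^1_z v\|_{L^p(\omega,S_z)}$ is still circular, so one must instead bound $\|Q^0_z w\|$ by $\|Q^0_z\|_{L^p\to L^p}\,\| w\|$ only after having an independent bound on a lower-order quantity.

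The cleanest fix, and the one I expect the authors use, is to \emph{not} pass through $Q^0_z w$ at all, but rather to write $v - Q^1_z v = (v - Q^0_z v) - (Q^1_z v - Q^0_z v)$ and note that $Q^1_z v - Q^0_z v$ has the explicit form $\int \bar\nabla\bar v(\bar x)\cdot(\bar y - \bar x)\psi(\bar x)\,d\bar x$ in rescaled variables, whose $L^p(\omega)$-norm on $S_z$ is $\lesssim h_z \| \nabla v - c\|_{L^p(\omega,S_z)}$ for a suitable constant vector $c$ (take $c = Q^0_z \nabla v$), and the latter is $\lesssim h_z^2 |v|_{W^2_p(\omega,S_z)}$ by \eqref{v-Q_vLp} applied to each $\partial_{x_j} v$. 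Meanwhile $\|v - Q^0_z v\|_{L^p(\omega,S_z)} \lesssim h_z \|\nabla v\|_{L^p(\omega,S_z)}$ is \emph{not} yet second order, so this decomposition alone is insufficient and one genuinely needs the gradient estimate \eqref{v-v_vW1p} to kill the leading term. So the real structure is: bound $\|\nabla(v - Q^1_z v)\|_{L^p(\omega,S_z)} \lesssim h_z|v|_{W^2_p(\omega,S_z)}$ from \eqref{v-v_vW1p}, then apply the Poincar\'e inequality \eqref{v-Q_vLp} to $w = v - Q^1_z v$ (whose $\psi_z$-mean need not vanish, but $w - Q^0_z w$ does), giving $\|w - Q^0_z w\| \lesssim h_z^2|v|_{W^2_p(\omega,S_z)}$, and finally observe $Q^0_z w = Q^0_z(v - Q^1_z v) = Q^0_z v - Q^1_z v|_{\text{const part}}$; since by construction $Q^1_z v(y) = Q^0_z v + \int \bar\nabla\bar v\cdot(\bar y - \bar x)\psi\,d\bar x$, the averaging $Q^0_z$ of the linear-in-$y$ term is computed against $\psi_z(y)$ and, because $\int(\bar y-\bar x)\psi(\bar y)\,d\bar y$ need not vanish, one gets $\|Q^0_z w\|_{L^p(\omega,S_z)} \lesssim h_z \|\nabla v - Q^0_z\nabla v\|_{L^p(\omega,S_z)} \lesssim h_z^2|v|_{W^2_p(\omega,S_z)}$ again by \eqref{v-Q_vLp}. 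Combining the two pieces via the triangle inequality $\|v - Q^1_z v\| \le \|w - Q^0_z w\| + \|Q^0_z w\|$ yields \eqref{v-Q_1Lp}. \textbf{The main obstacle} is organizing these pieces so as to avoid the circular bound $\|Q^0_z w\| \lesssim \|w\|$; the resolution is that $Q^0_z w$ involves only first derivatives of $v$ (the constant and linear Taylor data), so its size is governed by the \emph{oscillation} of $\nabla v$, which is second order. The change-of-variables bookkeeping (pulling back to $\bar S_z$, tracking that $\bar\omega_z \in A_p$ with the same constant, and the factors of $h_z$) is routine given Lemma~\ref{LM:approximation} and Proposition~\ref{pro:properties}\eqref{v}.
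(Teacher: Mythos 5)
Your final decomposition coincides with the paper's: with $w = v - Q^1_z v$, you split $v - Q^1_z v = (w - Q^0_z w) + Q^0_z w$, control the first piece by \eqref{v-Q_vLp} applied to $w$ together with $\nabla w = \nabla v - Q^0_z\nabla v$ (that is, \eqref{prop_der} plus \eqref{v-v_vW1p}), and then must show that the constant $Q^0_z w$ is second order. The first piece is fine. The gap is in the second piece: you assert $\|Q^0_z w\|_{L^p(\omega,S_z)} \lesssim h_z\|\nabla v - Q^0_z\nabla v\|_{L^p(\omega,S_z)}$, but the reason you offer (``because $\int(\bar y-\bar x)\psi(\bar y)\,d\bar y$ need not vanish'') is not a justification --- what licenses subtracting the constant vector $Q^0_z\nabla v$ from $\nabla v$ is a cancellation identity that you never state or prove. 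Concretely, $Q^0_z w = -\iint \nabla v(x)\cdot(y-x)\,\psi_z(x)\psi_z(y)\,dx\,dy$, and since $\int\psi_z = 1$ the $x$- and $y$-centroids of $\psi_z$ coincide, so
\begin{equation*}
\iint c\cdot(y-x)\,\psi_z(x)\,\psi_z(y)\,dx\,dy = 0 \qquad \forall\, c\in\R^n,
\end{equation*}
equivalently $R^1_z(q):=Q^0_z(Q^1_z q - q)=0$ for every $q\in\mathbb{P}_1$, hence $R^1_z(v)=R^1_z(v-Q^1_z v)$. This is exactly the ``crucial cancellation property'' on which the paper's proof rests; only after invoking it may you replace $\nabla v$ by $\nabla v - Q^0_z\nabla v$ inside the double integral, apply H\"older with $|y-x|\lesssim h_z$, use $\|\psi_z\|_{L^{p'}(\omega^{-p'/p},S_z)}\|1\|_{L^p(\omega,S_z)}\lesssim 1$ from \eqref{A_pclass}, and conclude with \eqref{v-Q_vLp} applied to the components of $\nabla v$.

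That this cancellation is not a cosmetic point is shown by your own intermediate claim, which is false: the singly averaged linear term $Q^1_z v - Q^0_z v = \int \nabla v(x)\cdot(y-x)\psi_z(x)\,dx$ is \emph{not} bounded by $h_z\|\nabla v - c\|_{L^p(\omega,S_z)}$ with $c=Q^0_z\nabla v$ (take $v$ linear: the right-hand side vanishes while the left-hand side is of order $h_z|\nabla v|\,\omega(S_z)^{1/p}$), precisely because $\int c\cdot(y-x)\psi_z(x)\,dx = c\cdot\bigl(y-\int x\psi_z(x)\,dx\bigr)$ does not vanish. Only the doubly averaged quantity $Q^0_z w$ annihilates constant gradients. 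Once the cancellation identity is stated and used as above, your argument becomes the paper's proof.
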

\begin{proof}
Since
\begin{equation*}
v -  Q^1_{z}v = (v - Q^1_{z}v) - Q^0_{z}(v - Q^1_{z}v)
- Q^0_{z}(Q^1_{z}v-v),
\end{equation*}
and $\nabla(v-Q^1_{z}v) = \nabla v - Q^0_{z}\nabla v$ from 
\eqref{prop_der}, we can apply \eqref{v-Q_vLp} twice to obtain
\begin{align*}
\|(v - Q^1_{z}v) - Q^0_{z}(v - Q^1_{z}v)\|_{L^p(\omega,S_z)} \lesssim 
h_{z}\|\nabla(v - Q^1_{z}v) \|_{ L^p(\omega,S_z) }
\lesssim
h^2_{z} | v |_{W^2_p(\omega,S_z)}.
\end{align*}
So it remains to estimate the term 
$R^1_z(v) := Q^0_{z}( Q^1_z v -  v)$. Since
$Q_{z}^0 v = Q_{z}^0 Q_{z}^0 v $, we notice that
$R^1_z(v) = Q^0_{z} ( Q^1_z v -  Q^0_{z} v )$. Then,
using the definition of the averaged Taylor polynomial 
given by \eqref{averaged_taylor}, we have
\begin{equation*}
\label{r1}
R^1_z(v) = \int_{S_{z}} \left(  \int_{S_{z}} 
 \nabla v(x) \cdot (y-x) \psi_{z} (x) \diff x \right) \psi_{z} (y)  \diff y.
\end{equation*}
We exploit the crucial \emph{cancellation
 property} $R^1_z(p) = 0$ for all $p \in \mathbb{P}_1$ as follows:
$R^1_z(v) = R^1_z(v - Q_{z}^1 v) = 0$. 
This yields
\begin{equation*}
\|R^1_z(v)\|^p_{L^p(\omega,S_z)} = 
\int_{S_{z}}\omega\left| \int_{S_{z}} \left(  \int_{S_{z}}  \nabla ( v(x) - Q^1_{z}v(x) ) \cdot (y-x)  \psi_{z} (x) \diff x \right) \psi_{z} (y)  \diff y
\right| ^p
\end{equation*}
Applying H\"older inequality to the innermost integral $I(y)$ leads to
\begin{equation*}
|I(y)|^p \lesssim h_{z}^p 
\left( \int_{S_{z}} \omega | \nabla(v(x) - Q^1_{z} v(x)) |^p \diff x \right)
\left( \int_{S_{z}} \omega^{-p'/p} \psi_{z}(x)^{p'} \diff x
\right)^{p/p'}.
\end{equation*}
This is combined with $\int_{S_z} \psi_z(y) \diff y = 1$ and
$
  \| \psi_{z}\|_{L^{p'}(\omega^{-\poverp},S_{z})} \| 1\|_{L^p(\omega,S_{z})} \lesssim 1,
$ 
which follows from
the definition of $\psi_{z}$ and the definition \eqref{A_pclass} of
the $A_p$-class, to arrive at
\begin{equation}\label{doble_aux_2}
\|R^1_z(v)\|^p_{L^p(\omega,S_z)} \lesssim h_{z}^{2p} \int_{S_{z}} \omega |D^2 v|^p. 
\end{equation}
This yields the desired estimate \eqref{v-Q_1Lp}.
\end{proof}

\subsection{Induction argument}
\label{sub:sec:induction}
In order to derive approximation properties of the averaged Taylor polynomial
$Q_{z}^m v$ for any $m \geq 0$, we apply an induction argument. We assume the 
following estimate as \emph{induction hypothesis:}
\begin{align}
\label{induction_hypo}
\| v - Q_{z}^{m-1} v\|_{L^p(\omega,S_{z})} \lesssim h_{z}^m
| v |_{W^m_p(\omega,S_{z})}.
\end{align}
Notice that, for $m=1$, the induction hypothesis is exactly \eqref{v-v_vW1p}, 
while for $m=2$ it is given by Lemma~\ref{LM:double_approximation}. 
We have the following general result for any $m \geq 0$.

\begin{lemma}[weighted $L^p$-based error estimate for $Q_{z}^m$]
\label{LM:m_approximation}
Let $z \in \Nin(\T)$ and $m \geq 0$. If $v \in W^{m+1}_p(\omega,S_z)$, then
we have the following approximation result
\begin{equation}
\label{v-Q_mLp}
  \|v - Q^m_{z} v \|_{L^p(\omega,S_z)}
  \lesssim h_{z}^{m+1} | v |_{W_p^{m+1}(\omega,S_z)},
  \end{equation}
where the hidden constant depends only on $C_{p,\omega}$, $\sigma$,
$\psi$ and $m$.
\end{lemma}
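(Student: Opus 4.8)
The plan is to establish \eqref{v-Q_mLp} by induction on $m$, mimicking the proof of Lemma~\ref{LM:double_approximation} with $Q_z^{m-1}$ in the role played there by $Q_z^0$. The base case $m=0$ is exactly \eqref{v-Q_vLp}, so I would assume $m\ge 1$ and that \eqref{v-Q_mLp} holds with $m$ replaced by $m-1$ --- this is the induction hypothesis \eqref{induction_hypo}. The starting point is the decomposition
\[
  v - Q_z^m v = \big[(v - Q_z^m v) - Q_z^{m-1}(v - Q_z^m v)\big] - Q_z^{m-1}(Q_z^m v - v) =: \mathrm{I} - R_z^m(v),
\]
which is an identity because $Q_z^{m-1}(Q_z^m v - v) = -\,Q_z^{m-1}(v - Q_z^m v)$ by linearity of $Q_z^{m-1}$.

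First I would bound $\mathrm{I}$. Setting $w := v - Q_z^m v\in W_p^m(\omega,S_z)$, the induction hypothesis \eqref{induction_hypo} applied to $w$ gives $\|\mathrm{I}\|_{L^p(\omega,S_z)}=\|w-Q_z^{m-1}w\|_{L^p(\omega,S_z)}\lesssim h_z^m\,|w|_{W_p^m(\omega,S_z)}$. For $|\alpha|=m$, property \eqref{prop_der} yields $D^\alpha Q_z^m v = Q_z^0 D^\alpha v$, so $D^\alpha w = D^\alpha v - Q_z^0 D^\alpha v$, and \eqref{v-Q_vLp} applied to $D^\alpha v$ gives $\|D^\alpha w\|_{L^p(\omega,S_z)}\lesssim h_z\,\|\nabla D^\alpha v\|_{L^p(\omega,S_z)}\le h_z\,|v|_{W_p^{m+1}(\omega,S_z)}$. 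Summing over $|\alpha|=m$ produces the auxiliary bound
\[
  |v - Q_z^m v|_{W_p^m(\omega,S_z)}\lesssim h_z\,|v|_{W_p^{m+1}(\omega,S_z)},
\]
hence $\|\mathrm{I}\|_{L^p(\omega,S_z)}\lesssim h_z^{m+1}\,|v|_{W_p^{m+1}(\omega,S_z)}$.

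Next I would treat the remainder $R_z^m(v)=Q_z^{m-1}(Q_z^m v - v)$. Subtracting $P^{m-1}v$ from $P^m v$ in \eqref{averaged_taylor} and using $Q_z^{m-1}(v-Q_z^{m-1}v)=0$ shows $R_z^m(v)=Q_z^{m-1}g_v$, where $g_v(y)=\sum_{|\alpha|=m}\tfrac{1}{\alpha!}\int_{S_z}D^\alpha v(x)\,(y-x)^\alpha\,\psi_z(x)\diff x$. Since $Q_z^m$ is invariant over $\mathbb{P}_m$, the cancellation $R_z^m(p)=Q_z^{m-1}(Q_z^m p-p)=0$ holds for every $p\in\mathbb{P}_m$, so by linearity $R_z^m(v)=R_z^m(v-Q_z^m v)=Q_z^{m-1}g_{v-Q_z^m v}$. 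Using $D^\alpha(v-Q_z^m v)=D^\alpha v - Q_z^0 D^\alpha v$ (again \eqref{prop_der}), the bound $|y-x|\lesssim h_z$ on $S_z$, and H\"older's inequality, I obtain $\|g_{v-Q_z^m v}\|_{L^\infty(S_z)}\lesssim h_z^{m}\,\|\psi_z\|_{L^{p'}(\omega^{-p'/p},S_z)}\,|v-Q_z^m v|_{W_p^m(\omega,S_z)}$. Finally, since $R_z^m(v)$ is a polynomial, $\|R_z^m(v)\|_{L^p(\omega,S_z)}\le\|Q_z^{m-1}g_{v-Q_z^m v}\|_{L^\infty(S_z)}\,\|1\|_{L^p(\omega,S_z)}$; combining this with the stability bound \eqref{Qm_stab} for $k=0$, the scaling $\|\psi_z\|_{L^\infty(S_z)}\lesssim h_z^{-n}$, and the bound $\|1\|_{L^p(\omega,S_z)}\,\|1\|_{L^{p'}(\omega^{-p'/p},S_z)}\lesssim h_z^n$ (a consequence of \eqref{A_pclass} and shape regularity, exactly as in the proof of Lemma~\ref{LM:double_approximation}) makes all weight factors and all powers $h_z^{\pm n}$ cancel, leaving $\|R_z^m(v)\|_{L^p(\omega,S_z)}\lesssim h_z^m\,|v-Q_z^m v|_{W_p^m(\omega,S_z)}\lesssim h_z^{m+1}\,|v|_{W_p^{m+1}(\omega,S_z)}$. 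Adding the estimates for $\mathrm{I}$ and $R_z^m(v)$ then gives \eqref{v-Q_mLp}.

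I expect the delicate point to be twofold. The argument must be kept from being circular: the remainder $R_z^m(v)$ has to be controlled by a quantity carrying one \emph{more} derivative than $v-Q_z^m v$ --- never by $\|v-Q_z^m v\|_{L^p(\omega,S_z)}$ itself --- and this is precisely what the representation $R_z^m(v)=Q_z^{m-1}g_{v-Q_z^m v}$, the cancellation over $\mathbb{P}_m$, and the reduction of $g$ through \eqref{prop_der} and \eqref{v-Q_vLp} are designed to secure. In addition, unlike in Lemma~\ref{LM:double_approximation} where $R_z^1(v)$ is a constant, here $R_z^m(v)$ is a genuine polynomial of degree $\le m-1$, so passing through $\|\cdot\|_{L^\infty(S_z)}$ via \eqref{Qm_stab} spawns several weighted $L^p$- and $L^{p'}$-norms of $1$ and of $\psi_z$; checking that these multiply to an $h_z$-independent constant --- which uses the $A_p$-condition exactly in the form $\|1\|_{L^p(\omega,S_z)}\|1\|_{L^{p'}(\omega^{-p'/p},S_z)}\lesssim|S_z|$ --- is the bookkeeping that has to be carried out carefully.
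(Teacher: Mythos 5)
Your proposal is correct and follows essentially the same route as the paper's proof: the same three-term decomposition, the same induction hypothesis \eqref{induction_hypo}, the auxiliary bound $|v-Q^m_z v|_{W^m_p(\omega,S_z)}\lesssim h_z|v|_{W^{m+1}_p(\omega,S_z)}$ via \eqref{prop_der} and \eqref{v-Q_vLp}, and the cancellation $R^m_z(p)=0$ for $p\in\mathbb{P}_m$ together with the $A_p$ bookkeeping $\|\psi_z\|_{L^{p'}(\omega^{-p'/p},S_z)}\|1\|_{L^p(\omega,S_z)}\lesssim 1$. The only (harmless) difference is that you bound the remainder by passing through $\|Q^{m-1}_z g\|_{L^\infty(S_z)}$ and the stability Lemma~\ref{LM:Qm_stab}, whereas the paper expands $R^m_z$ explicitly into the double integrals $I_{\alpha,\beta}$ and applies H\"older directly.
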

\begin{proof}
We proceed as in the proof of Lemma~\ref{LM:double_approximation}. 
Notice, first of all, that
\begin{align*}
v - Q^m_z v = (v - Q^m_z v) - Q^{m-1}_z(v - Q^m_z v) 
- Q^{m-1}_z (Q^m_z v-v).
\end{align*}
The induction hypothesis \eqref{induction_hypo} yields
\begin{equation*}
  \| (v - Q^m_z v) - Q^{m-1}_z(v - Q^m_z v) \|_{L^p(\omega,S_z)} 
  \lesssim h_z^m | v - Q^m_z v |_{W^m_p(\omega,S_z)}.
\end{equation*}
Since $D^\alpha Q^m_z v = Q^0_z D^\alpha v$ for all $|\alpha|=m$,
according to property \eqref{prop_der}, the estimate \eqref{v-Q_vLp} yields
$| v - Q^m_z v |_{W^m_p(\omega,S_z)} \lesssim h_z | v |_{W^{m+1}_p(\omega,S_z)}$, and then
\begin{equation*}
 \| (v - Q^m_z v) - Q^{m-1}_z(v - Q^m_z v) \|_{L^p(\omega,S_z)}
  \lesssim h_z^{m+1} | v |_{W^{m+1}_p(\omega,S_z)}.
\end{equation*}

It thus remains to bound the term
\[
 R^m_z(v) := Q_{z}^{m-1} (Q_{z}^{m} v - v).
\]
Since $Q_{z}^{m-1} Q_{z}^{m-1} v = Q_{z}^{m-1} v$, writing 
$Q^m_z = Q^{m-1}_z + \sum_{|\beta|=m} T_z^\beta$ with
\[
T_z^\beta (v) = \frac1{\beta!} \int_{S_z} D^\beta v(\zeta) (x - \zeta)^\beta \psi_z(\zeta) \diff \zeta,
\]
we obtain
\[
R^m_z(v) = \sum_{|\beta|=m} Q^{m-1}_z T_z^\beta (v).
\]
This representation allows us to write
\[
 R^m_z (v)(y) = \sum_{|\alpha|<m,|\beta|=m} I_{\alpha,\beta} v (y),
\]
with
\begin{align*}
  I_{\alpha,\beta} v (y) &= \frac1{\alpha!} \int_{S_z} \psi_z(x) D_x^\alpha
  T^\beta_z v(x) (y-x)^\alpha \diff x \\
  &= \frac1{\alpha!} \int_{S_z} \psi_z(x)
  \frac1{(\beta-\alpha)!} \int_{S_z} D_\zeta^\beta v(\zeta) (x - \zeta)^{\beta-\alpha} \psi_z(\zeta) \diff \zeta (y-x)^\alpha \diff x.
\end{align*}
Finally, we notice the following \emph{cancellation property}: 
$Q^m_z p = p$ for all $p\in \mathbb{P}_m$, whence $R^m_z (p) = 0$.
Consequently $R^m_z (v) = R^m_z (v - Q^m_z v)$ implies
\begin{align*}
  \| I_{\alpha,\beta} v \|_{L^p(\omega,S_z)}^p \lesssim h_z^{mp} \int_{S_z} \omega(y)
  \left|
    \int_{S_z} \psi_z(x) \int_{S_z} D_\zeta^\beta( v 
  - Q^m_z v)(\zeta) \psi_z(\zeta) \diff \zeta \diff x 
  \right|^p \diff y .
\end{align*}
Combining the identity $ D^\beta Q_{z}^m v =  Q_{z}^0 D^\beta v$, 
with \eqref{v-Q_vLp} and the bound
\[ 
  \| \psi_{z}\|_{L^{p'}(\omega^{-\poverp},S_{z})} \| 1\|_{L^p(\omega,S_{z})} \lesssim 1,
\]
we infer that
\begin{align*}
\| R^{m}_z v \|^p_{L^p(\omega,S_{z})}  & \lesssim h_{z}^{mp}\| 1 \|^p_{L^p(\omega,S_{z})} \|D^m v - D^m Q_{z}^m v \|^p_{L^p(\omega,S_{z})}
\| \psi_{z} \|_{L^p(\omega^{-p'/p},S_{z})}^p
\\
& \lesssim h_{z}^{(m+1)p} | v |^p_{W_p^{m+1}(\omega,S_{z})}.
\end{align*}
This concludes the proof.
\end{proof}

The following corollary is a simple consequence of Lemma~\ref{LM:m_approximation}.

\begin{corollary}[weighted $W_p^k$-based error estimate for $Q_{z}^m$]
\label{CO:k_approximation}
Let $z \in \Nin(\T)$. If $v \in W^{m+1}_p(\omega,S_z)$ with $m \geq 0$, then
\begin{equation}
\label{v-Q_mWkp}
  |v - Q^m_{z} v |_{W^k_p(\omega,S_z)}
  \lesssim h_{z}^{m + 1 -k } | v |_{W_p^{m+1}(\omega,S_z)},
\quad k = 0,1,\dots,m+1,
  \end{equation}
where the hidden constant depends only on $C_{p,\omega}$, $\sigma$, $\psi$ and $m$.
\end{corollary}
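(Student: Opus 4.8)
The plan is to read off Corollary~\ref{CO:k_approximation} from Lemma~\ref{LM:m_approximation} by using the commutation identity \eqref{prop_der} between differentiation and the averaged Taylor polynomial, handling the top-order case $k=m+1$ separately. First I would dispose of $k = m+1$: since $Q^m_z v$ is a polynomial of degree at most $m$ in the variable $y$, every derivative $D^\alpha Q^m_z v$ with $|\alpha| = m+1$ vanishes identically, so $D^\alpha(v - Q^m_z v) = D^\alpha v$ and \eqref{v-Q_mWkp} holds with $h_z^0$ on the right-hand side (with equality of the two seminorms, in fact).

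Next I would treat $0 \le k \le m$. Fix a multi-index $\alpha$ with $|\alpha| = k$. Since $v \in W^{m+1}_p(\omega,S_z) \subset W^{k}_1(S_z)$ by Proposition~\ref{pro:loc_int}, the commutation property \eqref{prop_der} applies and gives $D^\alpha Q^m_z v = Q^{m-k}_z D^\alpha v$, hence
\[
  D^\alpha(v - Q^m_z v) = D^\alpha v - Q^{m-k}_z (D^\alpha v).
\]
Now $D^\alpha v \in W^{m+1-k}_p(\omega,S_z) = W^{(m-k)+1}_p(\omega,S_z)$, so Lemma~\ref{LM:m_approximation}, invoked with polynomial degree $m-k \ge 0$ and function $D^\alpha v$ in place of $v$, yields
\[
  \| D^\alpha v - Q^{m-k}_z (D^\alpha v) \|_{L^p(\omega,S_z)}
    \lesssim h_z^{m-k+1}\, | D^\alpha v |_{W^{m+1-k}_p(\omega,S_z)}.
\]
Raising this to the $p$-th power, summing over all $|\alpha| = k$, and bounding $\sum_{|\alpha|=k} | D^\alpha v |_{W^{m+1-k}_p(\omega,S_z)}^p$ by $| v |_{W^{m+1}_p(\omega,S_z)}^p$ up to a combinatorial factor depending only on $n$ and $k$, I obtain \eqref{v-Q_mWkp}. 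The case $k=0$ is simply Lemma~\ref{LM:m_approximation} itself.

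I do not expect a genuine obstacle here; this is a short corollary. The only point requiring a word of care is the admissibility of \eqref{prop_der} at the regularity $v \in W^{m+1}_p(\omega,S_z)$, which is guaranteed by the remark immediately following \eqref{prop_der} (extending it to $W^{|\alpha|}_1(\omega,B)$) together with Proposition~\ref{pro:loc_int}. The dependence of the hidden constant on $C_{p,\omega}$, $\sigma$, $\psi$ and $m$ is inherited verbatim from Lemma~\ref{LM:m_approximation}, with only an additional multiplicative constant depending on $n$ and $k \le m+1$.
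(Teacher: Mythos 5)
Your proposal is correct and follows essentially the same route as the paper: dispose of $k=0$ and $k=m+1$ directly, and for intermediate $k$ use the commutation property \eqref{prop_der} to reduce to Lemma~\ref{LM:m_approximation} applied to $D^\alpha v$ with polynomial degree $m-k$. The only addition is your explicit remark about the combinatorial factor when summing over $|\alpha|=k$, which the paper leaves implicit.
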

\begin{proof} For $k=0$, the estimate \eqref{v-Q_mWkp} is given by Lemma \ref{LM:m_approximation}, while for $k= m +1 $,  
 \[ 
 |v - Q^m_{z} v |_{W^{m+1}_p(\omega,S_z)} = |v |_{W^{m+1}_p(\omega,S_z)}.
 \] 
For $ 0 < k < m+1$, we employ property \eqref{prop_der} of
  $D^\alpha Q_{z}^m v$ with $|\alpha|=k$ to write
\begin{equation*}
  |v - Q^m_{z} v |_{W^k_p(\omega,S_z)} 
   = \left ( \sum_{|\alpha| = k }  \| D^{\alpha} v -  Q^{m-k}_{z} D^{\alpha}v \|^p_{L^p(\omega,S_z)} \right)^{1/p}.
\end{equation*}
Therefore, applying estimate \eqref{v-Q_mLp} to 
$\| D^{\alpha} v -  Q^{m-k}_{z} D^{\alpha}v \|_{L^p(\omega,S_z)}$, we obtain
\begin{equation*}
|v - Q^m_{z} v |_{W^k_p(\omega,S_z)}
  \lesssim  h_{z}^{m+1-k} | v |_{W_p^{m+1}(\omega,S_z)},
\end{equation*}
which is the asserted estimate.
\end{proof}

\section{Weighted interpolation error estimates}
\label{sec:int}
In this section we construct a quasi-interpolation
operator $\Pi_{\T}$, based on local averages over stars.
This construction is well defined for functions in
$L^1(\Omega)$, and thus for functions in the weighted space $L^p(\omega,\Omega)$.
It is well known that this type of quasi-interpolation 
operator is important in the approximation of nonsmooth functions 
without point values because the Lagrange interpolation operator
is not even defined \cite{Clement,SZ:90}.
Moreover, averaged interpolation has better approximation
properties than the Lagrange interpolation for anisotropic
elements \cite{Acosta}. We refer the reader to \cite{NS,DL:05,NOS}
for applications of quasi-interpolation.

The construction of $\Pi_\T$ is based on the averaged Taylor
polynomial defined in \eqref{averaged_taylor}.
In \S~\ref{subsec:intP}, using the approximation estimates derived in section~\ref{sec:intweighted} together with an invariance property of $\Pi_{\T}$
over the space of polynomials, we derive optimal error estimates for 
$\Pi_{\T}$ in Muckenhoupt weighted Sobolev norms on 
simplicial discretizations. The case of rectangular discretizations
is considered in \S~\ref{subsec:intQ}.

Given $\omega \in A_p(\R^n)$ and $v \in L^p(\omega,\Omega)$,
we recall that $Q^m_z v$ is the averaged Taylor polynomial of order $m$ 
of $v$ over the node $z$; see \eqref{averaged_taylor}.
We define the quasi-interpolant $\Pi_{\T}v$ as the unique function
of $\V(\T)$ that satisfies $\Pi_{\T}v(z) = Q^m_z v(z)$ if $z \in \Nin(\T)$, and
$\Pi_{\T}v(z) = 0$ if $z \in \N(\T) \cap \partial \Omega$, \ie
\begin{equation}
\label{def:PI}
\Pi_{\T} v = \sum_{z \in \Ninn(\T)} Q^m_{z} v(z) \, \phi_z.
\end{equation}

Optimal error estimates for $\Pi_{\T}$ rely on its stability, which follows from the stability of $Q_z^m$ obtained in Lemma~\ref{LM:Qm_stab}.

\begin{lemma}[stability of $\Pi_\T$]
\label{LM:stab_PI}
Let $v \in W_p^{k}(\omega,S_T)$ with $0 \leq k \leq m+1$
and $T \in \T$. Then,
the quasi-interpolant operator $\Pi_{\T}$ defined by \eqref{def:PI} 
satisfies the following local stability bound
\begin{equation}
\label{stab_PI}
 | \Pi_{\T} v |_{W^k_p(\omega,T)} 
\lesssim  \sum_{ l = 0}^{k} h_T^{l-k}
|  v |_{W^l_p(\omega,S_T)}.
\end{equation}
\end{lemma}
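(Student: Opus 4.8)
The plan is to reduce the estimate to a combination of the local stability of the averaged Taylor polynomial $Q_z^m$ (Lemma~\ref{LM:Qm_stab}), an inverse estimate for the nodal shape functions $\phi_z$ in the weighted norm, and the fact that the number of nodes per element is bounded by a constant depending only on $m$ and $\sigma$. First I would fix $T \in \T$ and write, using the local representation \eqref{def:PI},
\[
 \Pi_\T v|_T = \sum_{z \in \N(T)} Q^m_z v(z)\, \phi_z,
\]
where for boundary nodes the coefficient is interpreted as zero. Differentiating, $|D^\kappa \Pi_\T v| \le \sum_{z \in \N(T)} |Q^m_z v(z)|\, |D^\kappa \phi_z|$ for $|\kappa|=k$, so taking weighted $L^p$ norms over $T$ gives
\[
 |\Pi_\T v|_{W^k_p(\omega,T)} \lesssim \sum_{z \in \N(T)} \|Q^m_z v\|_{L^\infty(S_z)} \, |\phi_z|_{W^k_p(\omega,T)}.
\]

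\textbf{Key steps.} The next step is a weighted inverse estimate for the shape functions: since $\phi_z$ is a fixed polynomial on the reference element scaled by $h_T$, one has $|D^\kappa \phi_z| \lesssim h_T^{-k}$ pointwise on $T$, hence $|\phi_z|_{W^k_p(\omega,T)} \lesssim h_T^{-k} \omega(T)^{1/p} = h_T^{-k}\|1\|_{L^p(\omega,T)}$. Combining this with the stability bound \eqref{Qm_stab} from Lemma~\ref{LM:Qm_stab}, namely $\|Q_z^m v\|_{L^\infty(S_z)} \lesssim h_z^{-n}\|1\|_{L^{p'}(\omega^{-p'/p},S_z)}\sum_{l=0}^k h_z^l |v|_{W^l_p(\omega,S_z)}$, and using shape regularity to replace $h_z$ by $h_T$ (they are comparable up to a factor depending on $\sigma$) and to bound $\#\N(T)$ by a constant, yields
\[
 |\Pi_\T v|_{W^k_p(\omega,T)} \lesssim h_T^{-n-k}\|1\|_{L^p(\omega,T)}\|1\|_{L^{p'}(\omega^{-p'/p},S_T)}\sum_{l=0}^k h_T^l |v|_{W^l_p(\omega,S_T)}.
\]
The final step is to absorb the factor $h_T^{-n}\|1\|_{L^p(\omega,T)}\|1\|_{L^{p'}(\omega^{-p'/p},S_T)}$ into a constant depending only on $C_{p,\omega}$ and $\sigma$: by the strong doubling property (Proposition~\ref{pro:double}) $\omega(S_T) \lesssim \omega(T)$ with constant depending on $C_{p,\omega}$ and $\sigma$, so $\|1\|_{L^p(\omega,T)} \approx \|1\|_{L^p(\omega,S_T)}$, and then the $A_p$-definition \eqref{A_pclass} applied to a ball containing $S_T$ and contained in a comparable dilate gives $\big(\fint_{S_T}\omega\big)\big(\fint_{S_T}\omega^{-1/(p-1)}\big)^{p-1} \lesssim C_{p,\omega}$, i.e. $\omega(S_T)^{1/p}\big(\int_{S_T}\omega^{-p'/p}\big)^{1/p'} \lesssim |S_T| \approx h_T^n$. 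This is exactly the quantity that cancels the $h_T^{-n}$, leaving the claimed bound \eqref{stab_PI}.

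\textbf{Main obstacle.} The routine parts are the inverse estimate for $\phi_z$ and the counting of nodes; the step requiring care is the passage from a patch $S_T$ (which need not be star-shaped or even convex) to a single ball in order to invoke the $A_p$-condition, together with the use of the strong doubling property to replace integrals over $S_T$ by integrals over $T$. One must check that $S_T$ is contained in a ball of radius comparable to $h_T$ and contains a ball of radius comparable to $h_T$, with constants depending only on $\sigma$; this is where shape regularity enters essentially. I expect this geometric bookkeeping, rather than any analytic difficulty, to be the main thing to get right.
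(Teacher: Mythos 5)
Your argument is correct and follows essentially the same route as the paper: the same node-wise decomposition via \eqref{def:PI}, the stability of $Q_z^m$ from Lemma~\ref{LM:Qm_stab}, the pointwise bound $|D^\kappa\phi_z|\lesssim h_T^{-k}$, and the $A_p$-condition applied on a ball of radius $\approx h_T$ containing the patch to cancel the factor $h_T^{-n}$, with shape regularity handling $h_z\approx h_T$ and the node count. The only (harmless) redundancy is the appeal to strong doubling to compare $\omega(T)$ and $\omega(S_T)$: since $T\subset S_T$ and both integrands are nonnegative, enlarging the domains of integration to the containing ball already gives the needed upper bound.
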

\begin{proof} Using the definition of $\Pi_{\T}$ given by \eqref{def:PI}, we have
\[
 | \Pi_{\T} v |_{W^k_p(\omega,T)} 
\leq 
 \sum_{z \in \Ninn(T)} \| Q^m_{z} v \|_{L^{\infty}(S_z)} ~ | \phi_z |_{W^k_p(\omega,T)}.
\] 
We resort to Lemma \ref{LM:Qm_stab} to derive
\begin{align*}
| \Pi_{\T} v |_{W^k_p(\omega,T)} \lesssim 
\sum_{z \in \Ninn(T)} h_z^{-n} | \phi_z |_{W^k_p(\omega,T)} 
\| 1 \|_{L^{p'}(\omega^{-p'/p},S_z)} \sum_{ l = 0}^{k} h_z^{l}
|  v |_{W_p^{l}(\omega,S_z)}.
\end{align*}
Since $|D^k \phi_z| \lesssim h_z^{-k}$ on $S_T$ and $\omega \in A_p(\R^{n})$, we obtain
\[
h_z^{-n} | \phi_z |_{W^k_p(\omega,T)}
  \| 1 \|_{L^{p'}(\omega^{-\poverp},\,S_z)}
  \lesssim 
\frac{h_z^{-k}}{h_{z}^n}
\left(\int_{S_z} \omega \right)^{1/p}
\left(\int_{S_z} \omega^{-p'/p} \right)^{1/p'}
\lesssim h_z^{-k},
\]
which, given the definition of $S_T$, the shape regularity of
$\T$, and the finite overlapping property of stars
imply \eqref{stab_PI}.
\end{proof}

\subsection{Interpolation error estimates on simplicial discretizations}
\label{subsec:intP}

The quasi-interpolant 
operator $\Pi_{\T}$ is invariant over the space of polynomials of
degree $m$ on simplicial meshes: 
$\Pi_\T v|_{S_z}  = v$ for $v \in \mathbb{P}_m(S_z)$ and $z \in
\Nin(\T)$ such that $\partial S_z \cap \partial \Omega = \emptyset$. Consequently,
\begin{equation}
 \label{invariance}
\Pi_{\T} Q^m_z \phi =  Q^m_z \phi.
\quad \forall \, \phi\in L^1(\omega,S_z).
\end{equation}
This property, together with \eqref{CO:k_approximation}, yields optimal interpolation estimates for $\Pi_{\T}$.

\begin{theorem}[interpolation estimate on interior simplices]
\label{thm:interpolintsimplex}
Given $T \in \T$ 
such that $\partial T \cap \partial \Omega = \emptyset$ 
and $v \in W_p^{m+1}(\omega,S_T)$, we have the following interpolation error estimate
\begin{equation}
\label{v-Pikp}
  |v - \Pi_{\T} v |_{W^k_p(\omega,T)}
  \lesssim h_{T}^{m + 1 -k } | v |_{W_p^{m+1}(\omega,S_T)},
\quad k = 0,1,\dots,m+1,
  \end{equation}
where the hidden constant depends only on $C_{p,\omega}$, $\sigma$,
$\psi$ and $m$.
\end{theorem}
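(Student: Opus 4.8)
The plan is to localize the error to the patch $S_T$ and exploit the polynomial invariance \eqref{invariance} of $\Pi_{\T}$ together with the approximation estimates of Corollary~\ref{CO:k_approximation}. First I would fix an arbitrary node $z \in \Nin(T)$ with $\partial S_z \cap \partial\Omega = \emptyset$ and set $w = v - Q^m_z v$; by the invariance property, $\Pi_{\T}(Q^m_z v) = Q^m_z v$ on $S_z$, so on $T \subset S_z$ we have $v - \Pi_{\T} v = w - \Pi_{\T} w$. Thus it suffices to bound $|w - \Pi_{\T} w|_{W^k_p(\omega,T)} \le |w|_{W^k_p(\omega,T)} + |\Pi_{\T} w|_{W^k_p(\omega,T)}$. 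The first term is controlled directly by Corollary~\ref{CO:k_approximation}: $|w|_{W^k_p(\omega,T)} \le |v - Q^m_z v|_{W^k_p(\omega,S_z)} \lesssim h_z^{m+1-k}|v|_{W^{m+1}_p(\omega,S_z)}$.

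For the second term, I would apply the local stability bound \eqref{stab_PI} of Lemma~\ref{LM:stab_PI} to $w$, giving $|\Pi_{\T} w|_{W^k_p(\omega,T)} \lesssim \sum_{l=0}^k h_T^{l-k}|w|_{W^l_p(\omega,S_T)}$. Here a mild subtlety arises: the stability bound is stated over the patch $S_T$ while the invariance correction was performed node-by-node over the stars $S_z$. I would handle this by noting that $S_T = \bigcup_{z \in T} S_z$, and that for each $z\in\N(T)$ the quantity $Q^m_z w(z)$ entering \eqref{def:PI} can be re-expressed: since $Q^m_z$ is invariant over $\mathbb{P}_m$ and $\Pi_{\T}$ reproduces $Q^m_{z'} v$ only when evaluated consistently, the cleanest route is to redo the proof of Lemma~\ref{LM:stab_PI} applied to the function $v - Q^m_{z_0} v$ for a single fixed $z_0 \in T$, using that $Q^m_z(Q^m_{z_0} v)$ need not equal $Q^m_{z_0} v$ but the difference $Q^m_z v - Q^m_z Q^m_{z_0} v = Q^m_z(v - Q^m_{z_0} v)$ is again an averaged Taylor polynomial of $v - Q^m_{z_0} v$, to which Lemma~\ref{LM:Qm_stab} applies. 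This yields $|\Pi_{\T}(v - Q^m_{z_0}v)|_{W^k_p(\omega,T)} \lesssim \sum_{l=0}^k h_T^{l-k}|v - Q^m_{z_0} v|_{W^l_p(\omega,S_T)}$, and then Corollary~\ref{CO:k_approximation} bounds each seminorm on the right by $h_T^{m+1-l}|v|_{W^{m+1}_p(\omega,S_T)}$.

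Combining the two estimates and using shape regularity ($h_z \approx h_T$ for all $z \in T$, and the finite overlap of stars so that seminorms over $S_z$ are controlled by seminorms over $S_T$), I get
\[
  |v - \Pi_{\T} v|_{W^k_p(\omega,T)} \lesssim h_T^{m+1-k}|v|_{W^{m+1}_p(\omega,S_T)} + \sum_{l=0}^k h_T^{l-k} h_T^{m+1-l}|v|_{W^{m+1}_p(\omega,S_T)},
\]
and since every term $h_T^{l-k}h_T^{m+1-l} = h_T^{m+1-k}$ is exactly the claimed power, this collapses to the desired bound \eqref{v-Pikp}, with the hidden constant depending only on $C_{p,\omega}$, $\sigma$, $\psi$ and $m$ as required.

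I expect the main obstacle to be the bookkeeping in the second step: matching the node-wise polynomial subtraction $Q^m_{z_0} v$ against the patch-based stability estimate, and verifying that subtracting a single global averaged Taylor polynomial (rather than a different one at each node) is legitimate. The resolution is the observation that $\Pi_{\T}$, being linear and reproducing polynomials of degree $m$ locally on each star, satisfies $\Pi_{\T}(v - Q^m_{z_0}v) = \Pi_{\T} v - Q^m_{z_0} v$ pointwise on $T$ whenever $z_0 \in T$ and $\partial S_{z_0}\cap\partial\Omega=\emptyset$; once this identity is in hand, everything else is a routine combination of Lemma~\ref{LM:stab_PI} and Corollary~\ref{CO:k_approximation} with the standard shape-regularity and finite-overlap arguments.
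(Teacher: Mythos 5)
Your proposal is correct and follows essentially the same route as the paper's proof: fix one node $z$ of $T$, use the invariance property \eqref{invariance} to reduce to $v-Q^m_z v$, then combine the stability bound \eqref{stab_PI} with the approximation estimate \eqref{v-Q_mWkp} over the patch $S_T$ and conclude by shape regularity. The ``subtlety'' you spend time resolving --- that $\Pi_{\T}(v-Q^m_{z}v)=\Pi_{\T}v-Q^m_{z}v$ on $T$ for a single fixed node $z$ --- is exactly what the paper's one-line invocation of \eqref{invariance} (plus linearity of $\Pi_\T$) already gives, so no additional argument is required.
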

\begin{proof}
Given $T \in \T$, choose a node $z \in \Nin(T)$.
Property \eqref{invariance} yields, 
\[
  |v - \Pi_{\T} v |_{W^k_p(\omega,T)} \leq
   |v - Q_{z}^m v |_{W^k_p(\omega,T)} + 
  | \Pi_{\T} ( Q_{z}^m v - v ) |_{W^k_p(\omega,T)}.
\]
Combining the stability of $\Pi_{\T}$ given by \eqref{stab_PI} 
together with \eqref{v-Q_mWkp} implies
\[
  |v - \Pi_{\T} v |_{W^k_p(\omega,T)} 
\lesssim
  \sum_{l=0}^k h_T^{l-k}|v - Q_{z}^m v |_{W^l_p(\omega,S_T)}
\lesssim
  h_T^{m+1-k}|v |_{W^{m+1}_p(\omega,S_T)},
\]
which is exactly \eqref{v-Pikp}.
\end{proof}

By using the fact that, $v \in W^{m+1}_p(\omega, \Omega) \cap \Wp(\omega,\Omega)$ implies $\Pi_\T v_{|\partial\Omega} = 0$
we can extend the results of Theorem~\ref{thm:interpolintsimplex} to boundary elements. The proof is an adaption of standard techniques and, 
in order to deal with the weight, those of the aforementioned Theorem~\ref{thm:interpolintsimplex}. 
See also Theorem~\ref{TH:v - PivH1boundary} below.

\begin{theorem}[interpolation estimates on Dirichlet simplices]
Let $ v \in \Wp(\omega,\Omega) \cap W^{m+1}_p(\omega, \Omega)$. If
$T \in \T$ is a boundary simplex, then \eqref{v-Pikp} holds with a 
constant that depends only on $C_{p,\omega}$, $\sigma$ and $\psi$.
\end{theorem}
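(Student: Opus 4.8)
The plan is to reduce the boundary case to the interior case already treated in Theorem~\ref{thm:interpolintsimplex}, using a standard partition of the nodes of $T$ into interior and boundary nodes, but keeping careful track of the weight throughout. First I would fix a boundary simplex $T$ and write
\[
  v - \Pi_{\T} v = (v - Q_z^m v) + (Q_z^m v - \Pi_{\T} v),
\]
where $z$ is chosen to be an \emph{interior} node of $T$ if $T$ has one; the second term is a polynomial supported on $T$, so it suffices to bound it in $L^p(\omega,T)$ and then use an inverse-type estimate, or better, to bound it directly in the $W^k_p(\omega,T)$-seminorm via the local stability of $\Pi_\T$ from Lemma~\ref{LM:stab_PI} applied to $Q_z^m v - v$. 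The key point is that $\Pi_\T(Q_z^m v - v)$ is determined by the nodal values $(Q_y^m v)(y) - (Q_z^m v)(y)$ at interior nodes $y$ of $T$ and by $-(Q_z^m v)(y)$ at boundary nodes $y \in \N(T)\cap\partial\Omega$, since $v$ vanishes on $\partial\Omega$ and hence so do its averaged Taylor polynomials contribution to those boundary nodes once we invoke $v\in\Wp(\omega,\Omega)$.

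The main obstacle is handling the boundary nodes where $\Pi_\T v(y)=0$ but $Q_z^m v(y)\neq0$ in general. The device is the one alluded to in the remark preceding the statement: because $v\in\Wp(\omega,\Omega)$, the trace of $v$ on $\partial\Omega$ vanishes, and one can replace $v$ on the patch $S_z$ by a reflection/extension, or more simply observe that for a boundary node $y$, the star $S_y$ meets $\partial\Omega$, and a weighted Poincar\'e inequality on $S_y$ with the vanishing-trace boundary condition (in place of vanishing mean) gives $\|v\|_{L^p(\omega,S_y)}\lesssim h_y\|\nabla v\|_{L^p(\omega,S_y)}$; combined with the stability bound \eqref{Qm_stab} for $Q_z^m$ this controls $|Q_z^m v(y)|$ by $h_z^{1-n}\|1\|_{L^{p'}(\omega^{-p'/p},S_z)}|v|_{W^1_p(\omega,S_z)}$ plus higher-order terms. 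Then, exactly as in the proof of Lemma~\ref{LM:stab_PI}, multiplying by $|\phi_y|_{W^k_p(\omega,T)}\lesssim h_T^{-k}|T|_\omega^{1/p}$ and using the $A_p$-condition to absorb $\|1\|_{L^{p'}(\omega^{-p'/p},S_z)}\|1\|_{L^p(\omega,S_z)}\lesssim h_z^n$ yields a bound of the form $h_T^{m+1-k}|v|_{W^{m+1}_p(\omega,S_T)}$ after invoking the interior estimate \eqref{v-Pikp} on the difference $Q_z^m v - v$.

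Concretely, the steps in order are: (1) split $\N(T)=\Nin(T)\cup(\N(T)\cap\partial\Omega)$ and write $\Pi_\T v = \sum_{y\in\Nin(T)}Q_y^m v(y)\phi_y$; (2) for interior nodes, proceed verbatim as in Theorem~\ref{thm:interpolintsimplex} using the invariance property \eqref{invariance} restricted to those nodes whose stars stay interior, plus Corollary~\ref{CO:k_approximation}; (3) for boundary nodes, establish the Poincar\'e-with-vanishing-trace estimate on $S_y$ — this is where the geometry enters, and the argument mirrors Corollary~\ref{C:Poincareweighted-2} with the constant function replaced by a Hardy-type bound, or one simply cites that $S_y$ is a finite union of star-shaped domains one of which abuts $\partial\Omega$ so that a weighted Friedrichs inequality holds; (4) assemble the two contributions and sum over the finitely overlapping stars using shape regularity. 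The hardest part is genuinely step (3): ensuring the weighted Poincar\'e/Friedrichs constant on the boundary patch depends only on $C_{p,\omega}$, $\sigma$, and $\psi$, independently of how the patch meets $\partial\Omega$; this is secured by the scaling invariance in property \eqref{v} of Proposition~\ref{pro:properties} together with the fact that after rescaling by $h_y$ the rescaled patch has diameter $\approx 1$ and a uniformly controlled shape, so Lemma~\ref{le:Poincareweighted} (applied to $v$ minus a suitable average that is forced to be small by the vanishing trace) applies with uniform constant.
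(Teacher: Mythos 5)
The paper itself does not write this proof out: it only remarks that the result follows by adapting the interior argument of Theorem~\ref{thm:interpolintsimplex} and the (technical) boundary arguments of the type in [DL:05, Thm.~3.1] and [NOS, Thm.~4.8]. Your overall skeleton matches that intended route: split off $v-Q^m_z v$, treat the interior-node differences $Q^m_y v(y)-Q^m_z v(y)=Q^m_y(v-Q^m_z v)(y)$ exactly as in the interior case via Lemma~\ref{LM:Qm_stab} and \eqref{v-Q_mWkp}, and isolate the boundary-node contribution $\sum_{y\in\N(T)\cap\partial\Omega}Q^m_z v(y)\,\phi_y$. The problem is your treatment of that last term, which is the whole content of the boundary case, and as written it does not deliver the asserted order.

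Concretely, you bound $|Q^m_z v(y)|$ through $\|Q^m_z v\|_{L^\infty(S_z)}$ via \eqref{Qm_stab}, upgrading only the $l=0$ term by a first-order vanishing-trace (Friedrichs) inequality $\|v\|_{L^p(\omega,S_z)}\lesssim h_z\|\nabla v\|_{L^p(\omega,S_z)}$. After multiplying by $|\phi_y|_{W^k_p(\omega,T)}\lesssim h_T^{-k}\omega(T)^{1/p}$ and using the $A_p$-condition, this gives at best $\sum_{l=1}^{m}h_T^{l-k}|v|_{W^l_p(\omega,S_T)}$ — a stability-type bound with a single power of $h$ gained — not $h_T^{m+1-k}|v|_{W^{m+1}_p(\omega,S_T)}$ as claimed; the phrase ``after invoking the interior estimate \eqref{v-Pikp} on the difference $Q^m_z v - v$'' is exactly where the missing orders are asserted but never produced. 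Moreover, the mechanism cannot be repaired: if $v$ has zero trace but $O(1)$ normal derivative, then $\|Q^m_z v\|_{L^\infty(S_z)}\approx h_z$, so any estimate routed through the sup of $Q^m_z v$ over the patch is intrinsically first order. The smallness one needs is only at the boundary node itself, i.e.\ a pointwise (trace-sense) bound for $(Q^m_z v - v)(y)$ of order $h^{m+1}$, which is not available from the $L^p(\omega)$-based estimates of Corollary~\ref{CO:k_approximation} and cannot be obtained by a $C^0$ embedding in the weighted setting; it requires the technical argument the paper alludes to (integration along segments from the boundary face on which $v$ vanishes, or an average over that face, keeping the weight via H\"older and the $A_p$-condition). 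Two smaller points: your claim that the averaged Taylor polynomials ``vanish at boundary nodes'' because $v\in\Wp(\omega,\Omega)$ is false (and unnecessary, since $\Pi_\T$ zeroes those nodes by definition of \eqref{def:PI}); and you do not address boundary simplices with no node in $\Nin(\T)$, where $\Pi_\T v|_T=0$, $Q^m_z$ must be centered at a node of a neighboring element, and only the trace-based argument can give the full order $h^{m+1-k}$. Your step (3) would suffice for $m=0$, but the theorem concerns the finite element degree $m\ge 1$, so this is a genuine gap.
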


We are now in the position to write a global interpolation estimate. 

\begin{theorem}[global interpolation estimate over simplicial meshes]
\label{TH:v-Pikp}
Given $\T \in \Tr$ and $v \in W_p^{m+1}(\omega,\Omega)$,
we have the following global interpolation error estimate
\begin{equation}
\label{v-Pikpglob}
\left( \sum_{T\in\T} h_T^{-(m+1-k)p}|v - \Pi_{\T} v |_{W^k_p(\omega,T)}^{p} \right)^{1/p}
  \lesssim  | v |_{W_p^{m+1}(\omega,\Omega)},
\end{equation}
for $k = 0,\ldots,m+1$, where the hidden constant depends only on $C_{p,\omega}$, $\sigma$,
$\psi$ and $m$.
\end{theorem}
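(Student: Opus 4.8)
The plan is to derive the global estimate \eqref{v-Pikpglob} by summing the local interpolation estimate from Theorem~\ref{thm:interpolintsimplex} over all elements, exploiting shape regularity and the finite overlapping property of the stars $\{S_z\}$. First I would raise the local bound \eqref{v-Pikp} to the $p$-th power on each $T\in\T$; for interior simplices this is immediate, and for boundary simplices it follows from the companion theorem on Dirichlet simplices, which requires $v\in\Wp(\omega,\Omega)\cap W^{m+1}_p(\omega,\Omega)$. This yields
\[
  h_T^{-(m+1-k)p}|v-\Pi_{\T}v|_{W^k_p(\omega,T)}^p \lesssim |v|_{W^{m+1}_p(\omega,S_T)}^p
\]
for every element $T$, with a constant depending only on $C_{p,\omega}$, $\sigma$, $\psi$ and $m$.

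Next I would sum over $T\in\T$ and observe that, by the definition $S_T=\bigcup_{z\in T}S_z$ together with the shape regularity \eqref{shape_reg}, each element $T'$ appears in only a bounded number $N=N(\sigma,n)$ of patches $S_T$; equivalently, $\sum_{T\in\T}\chi_{S_T}\le N$ pointwise. Hence
\[
  \sum_{T\in\T}|v|_{W^{m+1}_p(\omega,S_T)}^p
  = \sum_{T\in\T}\sum_{|\alpha|=m+1}\int_{S_T}\omega|D^\alpha v|^p
  \le N\sum_{|\alpha|=m+1}\int_{\Omega}\omega|D^\alpha v|^p
  = N\,|v|_{W^{m+1}_p(\omega,\Omega)}^p.
\]
Combining the two displays and taking $p$-th roots gives \eqref{v-Pikpglob}.

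There is no serious obstacle here: the only point requiring a little care is the finite-overlap bookkeeping, i.e.\ verifying that the number of patches $S_T$ containing a fixed element is controlled purely by the shape-regularity constant $\sigma$ (and the dimension $n$), so that the hidden constant does not degenerate. This is a standard consequence of shape regularity — the patch $S_T$ is contained in a ball of radius comparable to $h_T$, and shape regularity forces neighbouring element sizes to be comparable — and it is already implicit in the local stability argument of Lemma~\ref{LM:stab_PI}. The case $k=m+1$ is trivial since then $|v-\Pi_{\T}v|_{W^{m+1}_p(\omega,T)}=|v|_{W^{m+1}_p(\omega,T)}$ on each $T$ and summing directly gives the bound with constant one.
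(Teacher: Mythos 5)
Your proposal is correct and follows essentially the same route as the paper: raise the local estimate \eqref{v-Pikp} to the $p$-th power, sum over all $T\in\T$, and invoke the finite overlapping property of the patches (guaranteed by shape regularity) to absorb the repeated counting of $S_T$ into the constant. You merely spell out the overlap bookkeeping and the boundary-simplex case that the paper's one-line proof leaves implicit.
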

\begin{proof}
 Raise \eqref{v-Pikp} to the $p$-th power and add over all $T \in \T$. 
The finite overlapping property of stars of $\T$ yields the result.
\end{proof}

\subsection{Anisotropic interpolation estimates on rectangular meshes}
\label{subsec:intQ}

Narrow or anisotropic elements are those with disparate sizes in 
each direction. They are necessary,
for instance, for the optimal approximation of functions with a 
strong directional-dependent behavior such as line and edge
singularities, boundary layers, and shocks (see \cite{DL:05,DLP:12,NOS}).

Inspired by \cite{DL:05}, here we derive interpolation error estimates assuming only 
that neighboring elements have comparable sizes, thus obtaining results which are valid for a rather general
family of anisotropic meshes. Since symmetry is essential, 
we assume that $\Omega = (0,1)^n$, or that $\Omega$ is any 
domain which can be decomposed into $n$-rectangles.
We use below the notation introduced in \cite{DL:05}.

We assume that the mesh $\T$ is composed of rectangular elements $R$, with sides parallel to the coordinate axes. 
By $\vero \in \N(\T)$ we denote a node or vertex of the triangulation $\T$ and by
$S_{\vero}$, $S_R$ the associated patches; see \S~\ref{sub:sec:preliminars}.
Given $R \in \T$,
we define $h_{R}^i$ as the length of $R$ in the $i$-th direction and, if $\vero \in \N(\T)$, we define
$h_{\vero}^i = \min\{ h_{R}^i: \vero \in R\}$ for $i=1,\cdots,n$. 
The finite element space is defined by \eqref{eq:defFESpace} with $\mathcal{P} = \Q_1$.

We assume the following weak shape regularity condition: there exists a constant $\sigma > 1$, such that
if $R,S \in \T$ are neighboring elements, we have
\begin{equation}
 \label{shape_reg_weak}
 \frac{h_R^i}{h_S^i} \leq \sigma, \qquad i=1,\dots,n.
\end{equation}
Whenever $\vero$ is a vertex of $R$ the shape regularity assumption \eqref{shape_reg_weak} implies that
$h_{\vero}^i$ and $h_{R}^i$ are equivalent up to a constant that depends only on $\sigma$. We define
\[
  \psi_{\vero}(x) =  \frac{1}{ h_{\vero}^1 \dots h_{\vero}^n } \psi
      \left(\frac{\vero_1-x_1}{h_{\vero}^1}, \dots, \frac{\vero_n-x_n}{h_{\vero}^n}\right),
\]
which, owing to \eqref{shape_reg_weak} and $r \leq 1/\sigma$, satisfies $\supp \psi_{\vero} \subset S_{\vero}$. 
Notice that this function incorporates a different length scale on each direction $x_i$, which
will prove useful in the study of anisotropic estimates. 

Given $\omega \in A_p(\R^n)$, and $v \in L^p(\omega,\Omega)$, we define $Q^1_{\vero} v$, the first degree regularized Taylor 
polynomial of $v$ about the vertex $\vero$ as in \eqref{averaged_taylor}. We also define
the quasi-interpolation operator $\Pi_{\T}$ as in
\eqref{def:PI}, \ie upon denoting by $\lambda_{\vero}$ the Lagrange
nodal basis function of $\V(\T)$, $\Pi_{\T} v$ reads
\begin{equation}
\label{Pi_lambda}
  \Pi_{\T} v := \sum_{\vero \in \Ninn(\T)} Q^1_{\vero}v(\vero) \lambda_{\vero}.
\end{equation}

The finite element space $\V(\T)$ is not invariant under the operator defined in \eqref{Pi_lambda}. Consequently, we cannot use the techniques for 
simplicial meshes developed in \S~\ref{subsec:intP}. This, as the results below show, is not a limitation to obtain interpolation error estimates. 

\begin{lemma}[anisotropic $L^p$-weighted error estimates I]
\label{LM:aniso_approximation}
Let $\verot \in \Nin(\T)$. If $v \in W^1_p(\omega,S_{\verot})$, then we have
\begin{equation}
\label{aniso_v-Q_vLp}
  \|v - Q^0_{\verot} v \|_{L^p(\omega,S_{\verot})}
  \lesssim    \sum_{i=1}^n h_{\verot}^i
  \| \partial_{x_i} v\|_{L^p(\omega,S_{\verot})}.
\end{equation}
If $v \in W^2_p(\omega,S_{\verot})$ instead, then the following estimate holds
\begin{equation}
\label{aniso_v-v_vW1p}
  \|\partial_{x_j}(v - Q^1_{\verot}v) \|_{L^p(\omega,S_{\verot})}
  \lesssim  \sum_{i=1}^n h_{\verot}^i
        \| \partial_{x_i}\partial_{x_j} v\|_{L^p(\omega,S_{\verot})},
  \end{equation}
for $j=1,\dots,n$. In both inequalities, the hidden constants depend only on $C_{p,\omega}$, $\sigma$ and $\psi$.
\end{lemma}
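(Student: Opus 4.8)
The plan is to deduce \eqref{aniso_v-v_vW1p} from \eqref{aniso_v-Q_vLp} and to prove the latter directly on the patch, \emph{without} the rescaling employed in Lemma~\ref{LM:approximation}. For the first reduction, note that $Q^1_{\verot}$ is built from the anisotropic mollifier $\psi_{\verot}$ exactly as $Q^1_z$ was built from $\psi_z$ in \S~\ref{sub:sec:average}, and that the derivation of property \eqref{prop_der} uses only that the mollifier integrates to one and vanishes, together with its derivatives, on the boundary of its support; hence $\partial_{x_j} Q^1_{\verot} v = Q^0_{\verot}(\partial_{x_j} v)$. Consequently $\partial_{x_j}(v - Q^1_{\verot} v) = \partial_{x_j} v - Q^0_{\verot}(\partial_{x_j} v)$, and \eqref{aniso_v-v_vW1p} follows by applying \eqref{aniso_v-Q_vLp} to $\partial_{x_j} v \in W^1_p(\omega, S_{\verot})$, $j = 1, \dots, n$. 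It thus suffices to establish \eqref{aniso_v-Q_vLp}.

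The obvious approach---mimic Lemma~\ref{LM:approximation} by mapping $S_{\verot}$ onto a unit-diameter domain via $x_i \mapsto (\verot_i - x_i)/h^i_{\verot}$ and invoking the weighted Poincar\'e inequality---does not work, because this change of variables is anisotropic: the pulled-back weight leaves $A_p(\R^n)$, with an $A_p$-constant that degenerates as the aspect ratios $h^i_{\verot}/h^j_{\verot}$ do, so that Lemma~\ref{le:Poincareweighted}, which admits only isotropic dilations, no longer applies. Nor can one reduce to one-dimensional weighted Poincar\'e inequalities on coordinate lines, since a restriction of an $A_p(\R^n)$ weight to such a line need not lie in $A_p(\R)$ at all: for instance $\omega(x) = |x|^\gamma \in A_p(\R^n)$ for every $\gamma < n(p-1)$, yet when $n \ge 2$ and $\gamma \ge p-1$ none of its coordinate slices is an $A_p(\R)$ weight.

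I would instead argue directly. Since $\int \psi_{\verot} = 1$ and $\supp \psi_{\verot} \subset S_{\verot}$, for smooth $v$ (the general case then follows by density, Proposition~\ref{PR:banach}) write
\[
 v(x) - Q^0_{\verot} v = \int_{S_{\verot}} \bigl(v(x) - v(\zeta)\bigr)\,\psi_{\verot}(\zeta)\,\diff\zeta
 = \sum_{i=1}^n \int_{S_{\verot}} (x_i - \zeta_i)\,\psi_{\verot}(\zeta) \int_0^1 \partial_{x_i} v\bigl(\zeta + t(x - \zeta)\bigr)\,\diff t\,\diff\zeta .
\]
By the geometry of the patch---$S_{\verot}$ is a union of rectangles meeting at $\verot$, near which $\psi_{\verot}$ is supported---the segments $\zeta + t(x - \zeta)$ remain in $S_{\verot}$ for $\zeta \in \supp\psi_{\verot}$ and $x \in S_{\verot}$; moreover $|x_i - \zeta_i| \lesssim h^i_{\verot}$ by the weak shape regularity \eqref{shape_reg_weak}. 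Hence $|v(x) - Q^0_{\verot} v| \lesssim \sum_{i=1}^n h^i_{\verot}\,G_i(x)$, where
\[
 G_i(x) := \int_{S_{\verot}} \psi_{\verot}(\zeta) \int_0^1 \bigl|\partial_{x_i} v\bigl(\zeta + t(x - \zeta)\bigr)\bigr|\,\diff t\,\diff\zeta
\]
is a local average of $|\partial_{x_i} v|$. It then remains to prove that $\|G_i\|_{L^p(\omega, S_{\verot})} \lesssim \|\partial_{x_i} v\|_{L^p(\omega, S_{\verot})}$ with a constant depending only on $C_{p,\omega}$, $\sigma$ and $\psi$, after which \eqref{aniso_v-Q_vLp} follows upon summing over $i$ and using the equivalence of norms on $\R^n$.

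This weighted bound is the crux, and the step I expect to consume most of the work. The change of variables $\eta = \zeta + t(x - \zeta)$ in the inner integral, together with $\|\psi_{\verot}\|_{L^\infty} \lesssim |\supp\psi_{\verot}|^{-1}$ and an integration in $t$, exhibits $G_i$ as an integral operator on $S_{\verot}$ whose kernel behaves like $|\supp\psi_{\verot}|^{-1}\rho(x,\eta)^{-(n-1)}$, with $\rho(x,\eta) = \max_j |x_j - \eta_j|/h^j_{\verot}$---a local fractional integral of order one with respect to the anisotropic quasi-metric $\rho$. The difficulty is that, under the (anisotropic) normalization $\xi_j = x_j/h^j_{\verot}$ that turns $\rho$ into the Euclidean metric, this becomes a standard local fractional integral but the weight becomes $\omega \circ \diag(h^1_{\verot}, \dots, h^n_{\verot})$, which is no longer an $A_p$ weight with a controlled constant; the Hardy--Littlewood maximal theorem is therefore unavailable in the normalized variables, and one must exploit the Muckenhoupt $A_p$-condition for $\omega$ in the original coordinates instead. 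Carrying this through with a constant depending only on $C_{p,\omega}$, $\sigma$ and $\psi$---using the structure of the $A_p$-class rather than the particular form of $\omega$, and absorbing the geometry of $S_{\verot}$ (a non-convex union of rectangles of comparable size by \eqref{shape_reg_weak}, with controlled overlap) together with the localization of $\psi_{\verot}$ at $\verot$---is the genuinely delicate point; here I would follow \cite{DL:05} (and \cite{NOS} in the weighted setting). Everything else, including the reduction of \eqref{aniso_v-v_vW1p} to \eqref{aniso_v-Q_vLp} and the extraction of the factors $h^i_{\verot}$, is routine.
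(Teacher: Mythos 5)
Your reduction of \eqref{aniso_v-v_vW1p} to \eqref{aniso_v-Q_vLp} via $\partial_{x_j}Q^1_{\verot}v=Q^0_{\verot}(\partial_{x_j}v)$ is correct, and your objection to the rescaling route is not a side remark: the paper's own proof \emph{is} that route --- it maps $S_{\verot}$ by $\bar x_i=(\verot_i-x_i)/h^i_{\verot}$ and ``proceeds exactly as in Lemma~\ref{LM:approximation}'' --- and this silently requires the pulled-back weight $\omega\circ\Fm_{\verot}^{-1}$ to have an $A_p$-constant controlled by $C_{p,\omega}$, which a diagonal (anisotropic) scaling does not guarantee; it does hold for weights compatible with the scaling directions (e.g.\ $y^\alpha$ in \cite{NOS}, which a diagonal scaling changes only by a multiplicative constant), and this compatibility is exactly what the restriction $\mathbf{A}=a\mathbf{I}$ in Lemma~\ref{le:Poincareweighted} is protecting.

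However, your proposal does not prove the lemma: all the analytic content has been moved into the unproven bound $\|G_i\|_{L^p(\omega,S_{\verot})}\lesssim\|\partial_{x_i}v\|_{L^p(\omega,S_{\verot})}$ with a constant depending only on $C_{p,\omega}$, $\sigma$, $\psi$, and you yourself flag it as the step to be borrowed from \cite{DL:05,NOS}. This is a genuine gap, and it cannot be closed in the generality in which you are working: such a bound would yield \eqref{aniso_v-Q_vLp} with a constant independent of the aspect ratio of $S_{\verot}$, and for a general Muckenhoupt weight this fails. Indeed, take $n=2$, $\verot$ at the origin with its four cells of size $1\times\epsilon$ (a legitimate configuration under \eqref{shape_reg_weak}), $\omega(x)=|x|^{\gamma}$ with $p-1<\gamma<2(p-1)$, so $\omega\in A_p(\R^2)$, and $v$ a smoothed sign function of $x_1$ transitioning on $|x_1|\le\epsilon$: then $\|v-Q^0_{\verot}v\|_{L^p(\omega,S_{\verot})}\gtrsim\epsilon^{1/p}$ because each half of the patch has $\omega$-measure $\approx\epsilon$, while $h^1_{\verot}\|\partial_{x_1}v\|_{L^p(\omega,S_{\verot})}\approx\epsilon^{(2+\gamma-p)/p}$ and $\partial_{x_2}v=0$, so the ratio of the two sides blows up like $\epsilon^{-(\gamma-p+1)/p}$ as $\epsilon\to0$. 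The references you invoke do not supply the missing step in this generality: \cite{NOS} treats $y^\alpha$, a power of the single coordinate being scaled, and \cite{DL:05} likewise exploits the specific structure of its weights. So either one assumes a compatibility between $\omega$ and the diagonal scalings (as in the applications, and as the paper's one-line rescaling proof implicitly does), or a genuinely different argument is required; your kernel/fractional-integral reduction stalls at exactly the obstruction you yourself identified in the rescaling approach, and the proposal therefore does not establish \eqref{aniso_v-Q_vLp} as stated.
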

\begin{proof}
 To exploit the symmetry of the elements we define the map
\begin{equation}
\label{aniso_map}
\Fm_\vero: x \mapsto \bar{x},
\qquad 
\bar{x}_i = \frac{\vero_i-x_i}{h_{\vero}^i}, \qquad i=1,\dots,n,
\end{equation}
and proceed exactly as in the proof of Lemma~\ref{LM:approximation}.
\end{proof}

Lemma~\ref{LM:aniso_approximation}, in conjunction with
the techniques developed in Lemma~\ref{LM:double_approximation} give rise
the second order anisotropic error estimates in the weighted $L^p$-norm.

\begin{lemma}[anisotropic $L^p$-weighted error estimate II]
\label{LM:aniso_approximation_2}
Let $\verot \in \Nin(\T)$. If $v \in W^2_p(\omega,S_{\verot})$, then we have
\begin{equation}
\label{aniso_v-v_vLp_double}
  \|v - Q^1_{\verot}v \|_{L^p(\omega,S_{\verot})}
  \lesssim  \sum_{i,j=1}^n h_{\verot}^i h_{\verot}^j
        \| \partial_{x_i}\partial_{x_j} v\|_{L^p(\omega,S_{\verot})},
  \end{equation}
where the hidden constant in the inequality above depends only on $C_{p,\omega}$, $\sigma$ and $\psi$.
\end{lemma}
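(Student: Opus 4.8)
The plan is to mimic the proof of Lemma~\ref{LM:double_approximation}, tracking the directional scales $h_{\verot}^i$ separately rather than the single scale $h_z$. The starting identity is
\[
  v - Q^1_{\verot} v = (v - Q^1_{\verot}v) - Q^0_{\verot}(v - Q^1_{\verot}v) - Q^0_{\verot}(Q^1_{\verot}v - v),
\]
so that, using $\nabla(v - Q^1_{\verot}v) = \nabla v - Q^0_{\verot}\nabla v$ from \eqref{prop_der}, I would apply the anisotropic estimate \eqref{aniso_v-Q_vLp} to the first bracketed term and then \eqref{aniso_v-v_vW1p} (applied componentwise in $j$) to $\partial_{x_j}(v - Q^1_{\verot}v)$. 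This yields
\[
  \|(v - Q^1_{\verot}v) - Q^0_{\verot}(v - Q^1_{\verot}v)\|_{L^p(\omega,S_{\verot})}
  \lesssim \sum_{j=1}^n h_{\verot}^j \|\partial_{x_j}(v - Q^1_{\verot}v)\|_{L^p(\omega,S_{\verot})}
  \lesssim \sum_{i,j=1}^n h_{\verot}^i h_{\verot}^j \|\partial_{x_i}\partial_{x_j} v\|_{L^p(\omega,S_{\verot})},
\]
which is the desired right-hand side.

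It then remains to control the remainder $R^1_{\verot}(v) := Q^0_{\verot}(Q^1_{\verot}v - v)$. As in Lemma~\ref{LM:double_approximation}, using $Q^0_{\verot} = Q^0_{\verot}Q^0_{\verot}$ we rewrite $R^1_{\verot}(v) = Q^0_{\verot}(Q^1_{\verot}v - Q^0_{\verot}v)$, so by the definition \eqref{averaged_taylor} of the averaged Taylor polynomial,
\[
  R^1_{\verot}(v)(y) = \int_{S_{\verot}}\!\left(\int_{S_{\verot}} \nabla v(x)\cdot(y-x)\,\psi_{\verot}(x)\diff x\right)\psi_{\verot}(y)\diff y.
\]
The crucial cancellation property $R^1_{\verot}(p)=0$ for all $p\in\mathbb{P}_1$ gives $R^1_{\verot}(v) = R^1_{\verot}(v - Q^1_{\verot}v)$. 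The key point, different from the isotropic case, is that the factor $(y-x)$ must be handled coordinate-by-coordinate: since $\psi_{\verot}$ is supported in $S_{\verot}$ and the $i$-th extent of $S_{\verot}$ is $\lesssim h_{\verot}^i$, one has $|y_i - x_i| \lesssim h_{\verot}^i$ for $x,y \in S_{\verot}$, hence $|(y-x)_i| \lesssim h_{\verot}^i$ in the expansion $\nabla(v - Q^1_{\verot}v)\cdot(y-x) = \sum_i \partial_{x_i}(v - Q^1_{\verot}v)(y_i - x_i)$. Applying H\"older's inequality to the innermost integral, combined with the bound $\|\psi_{\verot}\|_{L^{p'}(\omega^{-\poverp},S_{\verot})}\|1\|_{L^p(\omega,S_{\verot})} \lesssim 1$ (which follows from the definition of $\psi_{\verot}$, the equivalence of $h_{\verot}^i$ and $h_R^i$, and the $A_p$-condition \eqref{A_pclass}) and $\int_{S_{\verot}}\psi_{\verot}(y)\diff y = 1$, produces
\[
  \|R^1_{\verot}(v)\|_{L^p(\omega,S_{\verot})} \lesssim \sum_{i=1}^n h_{\verot}^i \|\partial_{x_i}(v - Q^1_{\verot}v)\|_{L^p(\omega,S_{\verot})}.
\]
Finally $\partial_{x_i}(v - Q^1_{\verot}v) = \partial_{x_i}v - Q^0_{\verot}\partial_{x_i}v$ by \eqref{prop_der}, and applying \eqref{aniso_v-Q_vLp} to each $\partial_{x_i}v$ gives $\|\partial_{x_i}(v - Q^1_{\verot}v)\|_{L^p(\omega,S_{\verot})} \lesssim \sum_{j=1}^n h_{\verot}^j\|\partial_{x_i}\partial_{x_j}v\|_{L^p(\omega,S_{\verot})}$, which combines with the previous display to close the estimate.

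The main obstacle I anticipate is bookkeeping rather than any deep difficulty: one must be careful that the anisotropic $A_p$-type bound $\|\psi_{\verot}\|_{L^{p'}(\omega^{-\poverp},S_{\verot})}\|1\|_{L^p(\omega,S_{\verot})}\lesssim 1$ really does hold with a constant depending only on $C_{p,\omega}$, $\sigma$ and $\psi$, despite the weight $\omega$ not being stretched. This is where one uses that $S_{\verot}$ is contained in a ball of radius $\approx \max_i h_{\verot}^i$ on which the $A_p$-average inequality \eqref{A_pclass} applies directly, and that $\psi_{\verot} \lesssim (\prod_i h_{\verot}^i)^{-1}$ uniformly; the ratios $h_{\verot}^i/h_{\verot}^j$ need not be controlled because the estimate \eqref{aniso_v-v_vLp_double} keeps every product $h_{\verot}^i h_{\verot}^j$ explicitly. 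I would present this as a direct computation, deferring to the already-proven Lemmas~\ref{LM:aniso_approximation} and \ref{LM:double_approximation} for the structural steps.
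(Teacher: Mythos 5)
Your proof is correct and follows essentially the same route as the paper: the same three-term decomposition, the first two terms handled by \eqref{aniso_v-Q_vLp} and \eqref{aniso_v-v_vW1p}, and the remainder $R^1_{\verot}(v)$ treated via the cancellation property $R^1_{\verot}(v)=R^1_{\verot}(v-Q^1_{\verot}v)$, a coordinate-by-coordinate bound $|y_i-x_i|\lesssim h^i_{\verot}$, H\"older's inequality, the bound $\|\psi_{\verot}\|_{L^{p'}(\omega^{-\poverp},S_{\verot})}\|1\|_{L^p(\omega,S_{\verot})}\lesssim 1$, and a final application of \eqref{aniso_v-v_vW1p}, exactly as in the paper's adaptation of Lemma~\ref{LM:double_approximation}.

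The one point to revise is your suggested justification of that weighted bound in the last paragraph. Enclosing $S_{\verot}$ in a ball $B$ of radius $\approx\max_i h^i_{\verot}$ and applying \eqref{A_pclass} on $B$ only gives $\bigl(\int_{S_{\verot}}\omega\bigr)^{1/p}\bigl(\int_{S_{\verot}}\omega^{-1/(p-1)}\bigr)^{1/p'}\lesssim |B|\approx(\max_i h^i_{\verot})^n$, whereas the factor you divide by (coming from $\|\psi_{\verot}\|_{L^\infty}$) is $\prod_i h^i_{\verot}\approx |S_{\verot}|$; the discrepancy is the aspect ratio of the cell, which is \emph{not} controlled under the weak shape regularity \eqref{shape_reg_weak}. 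So the circumscribed-ball argument is lossy precisely in the anisotropic regime the lemma is meant for; what is actually needed (and what the paper asserts directly from the definition of $\psi_{\verot}$ and \eqref{A_pclass}) is an $A_p$-type inequality over the anisotropic patch $S_{\verot}$ itself. State the bound as the paper does, or justify it over $S_{\verot}$ rather than over an enclosing ball; with that repaired, the rest of your argument closes the estimate exactly as in the paper.
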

\begin{proof}
Recall that, if $R^1_{\vero}(v) = Q^0_{\vero}( Q^1_\vero v -  v)$,
  then we can write
\begin{equation*}
v -  Q^1_{\vero}v = 
(v - Q^1_{\vero}v) - Q^0_{\vero}(v - Q^1_{\vero}v) - R^1_{\vero} (v).
\end{equation*}
Applying estimates \eqref{aniso_v-Q_vLp} and \eqref{aniso_v-v_vW1p}
successively, we see that
\begin{align*}
\|(v - Q^1_{\vero}v) - Q^0_{\vero}(v - Q^1_{\vero}v)\|_{L^p(\omega,S_\vero)} 
& \lesssim
\sum_{i=1}^n h_{\vero}^i
  \| \partial_{x_i} (v - Q^1_{\vero}v)\|_{L^p(\omega,S_{\vero})}
\\
& \lesssim
\sum_{i,j=1}^n h_{\vero}^i h_{\vero}^j
  \| \partial_{x_i}\partial_{x_j} v \|_{L^p(\omega,S_{\vero})}.
\end{align*}
It remanins then to bound $R^1_{\vero}(v)$. We proceed as
in the proof of \eqref{doble_aux_2} in Lemma~\ref{LM:double_approximation}.
The definition \eqref{averaged_taylor} of the 
averaged Taylor polynomial, together
with the cancellation property
$R^1_{\vero}(v) = R^1_{\vero}(v - Q_{\vero}^1 v)$, implies
\begin{equation*}
 \|R^1_{\vero}(v)\|^p_{L^p(\omega,S_\vero)} \lesssim \sum_{i=1}^n (h^i_{\vero})^p 
\| \partial_{x_i}(v - Q^1_{\vero} v)\|^p_{L^p(\omega,S_{\vero})}
\| 1\|^p_{L^p(\omega,S_{\vero})}
\| \psi_{\vero}\|^p_{L^{p'}(\omega^{-p'/p},S_{\vero})} 
\end{equation*}
Combining \eqref{aniso_v-v_vW1p} with the inequality
$
  \| \psi_{\vero}\|_{L^{p'}(\omega^{-\poverp},S_{\vero})} \| 1\|_{L^p(\omega,S_{\vero})} \lesssim 1,
$ 
which follows  from the
the definition of $\psi_{\vero}$ and the definition \eqref{A_pclass} 
of the $A_p$-class, yields
\begin{equation*}
 \|R^1_\vero(v)\|_{L^p(\omega,S_\vero)} \lesssim
\sum_{i,j=1}^n h^i_{\vero} h^j_{\vero} 
\|\partial_{x_i}\partial_{x_j} v\|_{L^p(\omega,S_{\vero})},
\end{equation*}
and leads to the asserted estimate \eqref{aniso_v-v_vLp_double}.
\end{proof}

The anisotropic error estimate \eqref{aniso_v-Q_vLp} together with the weighted $L^p$ stability
of the interpolation operator $\Pi_{\T}$, enables us to obtain anisotropic weighted $L^p$ interpolation
estimates, as shown in the following Theorem.

\begin{theorem}[anisotropic $L^p$-weighted interpolation estimate I]
\label{full_aniso_Lp}
Let $\T$ satisfy \eqref{shape_reg_weak} and $R \in \T$.
If $v \in L^p(\omega,S_{R})$, we have
\begin{equation}
\label{PiLpboundedaniso}
  \|  \Pi_{\T} v \|_{L^p(\omega,R)} \lesssim \| v \|_{L^p(\omega,S_{R})}.
\end{equation}
If, in addition, $w \in W^1_p(\omega,S_{R})$
and $\partial R \cap \partial \Omega = \emptyset$, then
\begin{equation}
\label{v - PivLp}
  \| v - \Pi_{\T} v \|_{L^p(\omega,R)} \lesssim \sum_{i=1}^n h_{R}^i
  \| \partial_{x_i} v\|_{L^p(\omega,S_{R})}.
\end{equation}
The hidden constants in both inequalities depend only on $C_{p,\omega}$, $\sigma$ and $\psi$.
\end{theorem}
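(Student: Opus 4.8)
The plan is to establish the two estimates separately, mimicking the structure of the simplicial case but relying on the anisotropic building blocks already in place. For the stability bound \eqref{PiLpboundedaniso}, I would start from the definition \eqref{Pi_lambda}, $\Pi_\T v = \sum_{\vero \in \Ninn(\T)} Q^1_\vero v(\vero) \lambda_\vero$, and bound the $L^p(\omega,R)$-norm by $\sum_{\vero \in R} \|Q^1_\vero v\|_{L^\infty(S_\vero)} \|\lambda_\vero\|_{L^p(\omega,R)}$. The nodal basis functions satisfy $\|\lambda_\vero\|_{L^\infty} \lesssim 1$, so $\|\lambda_\vero\|_{L^p(\omega,R)} \lesssim \omega(R)^{1/p}$. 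For the $L^\infty$ bound on $Q^1_\vero v$ I would use the $k=0$ case of the stability Lemma~\ref{LM:Qm_stab} (adapted to the anisotropic scaling), which gives $\|Q^1_\vero v\|_{L^\infty(S_\vero)} \lesssim (h^1_\vero \cdots h^n_\vero)^{-1} \|1\|_{L^{p'}(\omega^{-p'/p},S_\vero)} \|v\|_{L^p(\omega,S_\vero)}$. Then the product $(h^1_\vero \cdots h^n_\vero)^{-1} \omega(R)^{1/p} \|1\|_{L^{p'}(\omega^{-p'/p},S_\vero)}$ is controlled by the $A_p$-condition \eqref{A_pclass} together with the strong doubling property (Proposition~\ref{pro:double}) and \eqref{shape_reg_weak}, which ensures $|S_\vero| \approx |R| \approx h^1_\vero \cdots h^n_\vero$. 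Finally the finite overlapping of the patches $S_\vero$ for $\vero \in R$ closes the estimate.

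For the interpolation error estimate \eqref{v - PivLp}, the key is that $\Pi_\T$ reproduces constants on interior patches: since $Q^1_\vero c = c$ for any constant $c$ and $\sum_{\vero} \lambda_\vero \equiv 1$ locally, we have $\Pi_\T c = c$ on $R$ when $\partial R \cap \partial\Omega = \emptyset$. Picking any vertex $\vero_0$ of $R$ and writing $v - \Pi_\T v = (v - Q^0_{\vero_0} v) - \Pi_\T(v - Q^0_{\vero_0} v)$, I would apply the triangle inequality, then bound the first term directly by the anisotropic Poincar\'e-type estimate \eqref{aniso_v-Q_vLp} and the second term by the stability estimate \eqref{PiLpboundedaniso} applied to $v - Q^0_{\vero_0} v$ (note $Q^0_{\vero_0}v$ is a constant, so its interpolant on $R$ equals itself, consistent with the subtraction). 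Both contributions are then $\lesssim \sum_i h^i_R \|\partial_{x_i} v\|_{L^p(\omega,S_R)}$ after using \eqref{shape_reg_weak} to replace $h^i_{\vero_0}$ by $h^i_R$ and the finite overlap of stars to pass from $\bigcup_{\vero \in R} S_\vero$ to $S_R$.

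The main obstacle I anticipate is making the anisotropic scaling in the $L^\infty$-stability of $Q^1_\vero$ rigorous: Lemma~\ref{LM:Qm_stab} was stated with the isotropic scaling $h_z$, whereas here $\psi_\vero$ carries a separate length scale $h^i_\vero$ in each direction. One must verify that integration by parts in the definition \eqref{averaged_taylor} still goes through — the point is that $D^\alpha \psi_\vero$ picks up factors $\prod_i (h^i_\vero)^{-\alpha_i}$, while $(y-x)^\alpha$ on $S_\vero$ contributes $\prod_i (h^i_\vero)^{\alpha_i}$, so the two cancel and one is left with exactly the isotropic-looking bound $\|Q^1_\vero v\|_{L^\infty(S_\vero)} \lesssim |S_\vero|^{-1} \|1\|_{L^{p'}(\omega^{-p'/p},S_\vero)} \|v\|_{L^p(\omega,S_\vero)}$ (for $k=0$; higher-order terms are not needed since $m=1$ here and we only invoke the $k=0$ instance in the stability argument, the $k=1$ refinement entering only when $w \in W^1_p$). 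Once this scaling bookkeeping is done, the rest is a routine combination of the already-established anisotropic approximation lemmas and the $A_p$/doubling machinery.
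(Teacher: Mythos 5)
Your plan follows the same skeleton as the paper's proof: the stability bound \eqref{PiLpboundedaniso} is, in the paper, just Lemma~\ref{LM:stab_PI} with $k=0$, and your re-derivation (nodal expansion, $L^\infty$-bound on $Q^1_{\vero}$, $A_p$-condition) is essentially the proof of that lemma; your remark about redoing the scaling of Lemma~\ref{LM:Qm_stab} with the directional lengths $h^i_{\vero}$ is a legitimate and welcome point, since that lemma was stated with the isotropic scale $h_z$. Likewise, the error bound starts, as in the paper, from the reproduction of constants $\Pi_{\T}Q^0_{\vero_0}v=Q^0_{\vero_0}v$ on an interior $R$ and the identity $v-\Pi_\T v=(I-\Pi_\T)(v-Q^0_{\vero_0}v)$ followed by \eqref{PiLpboundedaniso}.

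There is, however, a gap in the last step. After applying \eqref{PiLpboundedaniso} to $v-Q^0_{\vero_0}v$ you must estimate $\|v-Q^0_{\vero_0}v\|_{L^p(\omega,S_R)}$ over the \emph{whole patch} $S_R=\bigcup_{\vero\in R}S_{\vero}$, whereas \eqref{aniso_v-Q_vLp} only controls $v-Q^0_{\vero}v$ on the single star $S_{\vero}$ attached to the \emph{same} node $\vero$; in general $S_R\supsetneq S_{\vero_0}$. Your appeal to ``finite overlap of stars'' does not repair this: on a neighboring star $S_{\vero}$ with $\vero\neq\vero_0$, Lemma~\ref{LM:aniso_approximation} bounds $v-Q^0_{\vero}v$, not $v-Q^0_{\vero_0}v$, and the difference of the two averaged constants must itself be estimated through the overlap region (e.g.\ $R\subset S_{\vero}\cap S_{\vero_0}$), which brings in the ratio $\mu(S_{\vero})/\mu(S_{\vero}\cap S_{\vero_0})$ and the doubling property. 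This chaining argument is precisely the content of the weighted Poincar\'e inequality~II, Corollary~\ref{C:Poincareweighted-2}, applied after the anisotropic change of variables \eqref{aniso_map} so that the constant remains independent of the aspect ratios; the paper's proof invokes exactly that corollary at this point rather than \eqref{aniso_v-Q_vLp}. With that substitution (and keeping your use of \eqref{shape_reg_weak} to replace $h^i_{\vero_0}$ by $h^i_R$), your argument closes and coincides with the paper's.
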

\begin{proof}
The local stability \eqref{PiLpboundedaniso}
of $\Pi_{\T}$ follows from Lemma \ref{LM:stab_PI} with $k=0$. 
Let us now prove \eqref{v - PivLp}. Choose a node
$\vero \in \Nin(R)$. Since $Q^0_{\vero}v$ is constant,
and $\partial R \cap \partial \Omega = \emptyset$, $ \Pi_{\T} Q^0_{\vero}v = Q^0_{\vero}v$
over $R$. This, in conjunction with estimate \eqref{PiLpboundedaniso}, allows us 
to write
\[
  \| v -  \Pi_{\T} v \|_{L^p(\omega,R)} =
  \| (I- \Pi_{\T})(v - Q^0_{\vero}v) \|_{L^p(\omega,R)}
  \lesssim \| v - Q^0_{\vero}v \|_{L^p(\omega,S_R)}.
\]
The desired estimate \eqref{v - PivLp} now follows from Corollary~\ref{C:Poincareweighted-2}.
\end{proof}

To prove interpolation error estimates on the first derivatives for interior elements we follow 
\cite[Theorem~2.6]{DL:05} and use the symmetries of a cube, thus handling the anisotropy 
in every direction separately. We start by studying the case of 
\emph{interior elements.}

\begin{figure}[h!]
  \begin{center}
    \includegraphics[scale=0.3]{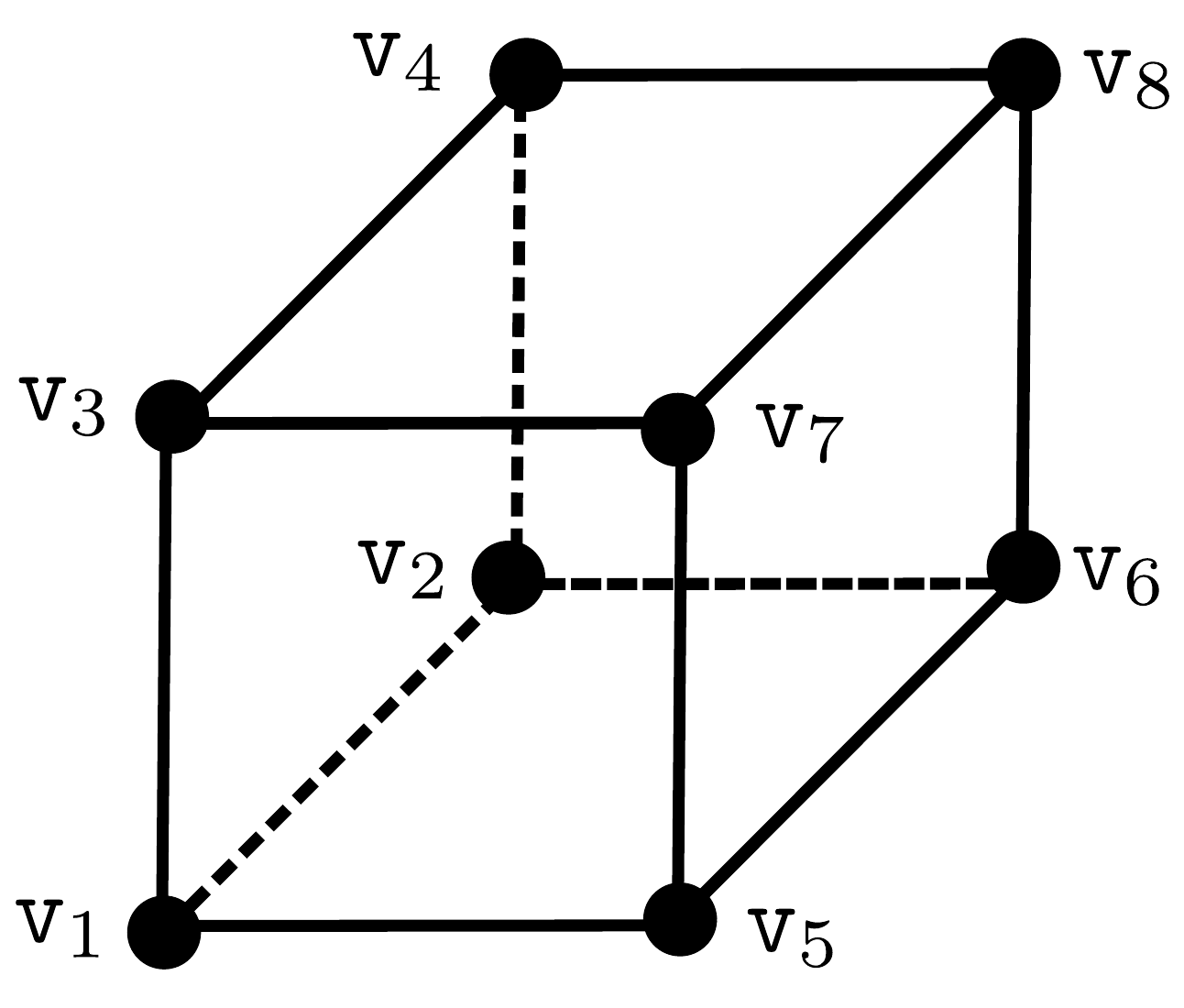}
  \end{center}
  \caption{\small
  An anisotropic cube with sides parallel to the coordinate axes and the labeling of 
  its vertices. The numbering of the vertices proceeds recursively as follows: a cube in dimension
  $m$ is obtained as the Cartesian product of an $(m-1)$-dimensional
  cube with vertices $\{\vero_i\}_{i=1}^{2^{m-1}}$ and an interval, and
  the new vertices are $\{\vero_{i+2^{m-1}}\}_{i=1}^{2^{m-1}}$. }
\label{fig:cube}
\end{figure}

\begin{theorem}[anisotropic $W^1_p$-weighted interpolation estimates]
\label{full_aniso_W1p}
Let
$R \in \T$ be such that $\partial R \cap \partial \Omega = \emptyset$.
If $v \in W^1_p(\omega,S_R)$ we have the stability bound
\begin{equation}
\label{aniso_stab_grad}
 \| \nabla \Pi_\T v \|_{ L^p(\omega,R) } \lesssim \| \nabla v \|_{ [L^p(\omega,S_R) }.
\end{equation}
If, in addition, $v \in W^2_p(\omega,S_R)$ we have, for $j=1,\cdots,n$,
\begin{equation}
\label{aniso_v - PivW1p}
  \| \partial_{x_j} ( v - \Pi_{\T} v )\|_{L^p(\omega,R)} \lesssim  
      \sum_{i=1}^n h_{R}^i  \| \partial_{x_j}\partial_{x_i} v\|_{L^p(\omega,S_R)}.
\end{equation}
The hidden constants in the inequalities above depend only on $C_{p,\omega}$, $\sigma$ and $\psi$.
\end{theorem}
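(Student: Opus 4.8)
The stability bound \eqref{aniso_stab_grad} follows immediately from Lemma~\ref{LM:stab_PI} with $k=1$, using the weak shape regularity \eqref{shape_reg_weak} to replace $h_R$ by the direction-dependent scales $h_R^i$ where needed; the essential content of the theorem is the anisotropic estimate \eqref{aniso_v - PivW1p}. The plan is to follow the strategy of \cite[Theorem~2.6]{DL:05}: fix $j \in \{1,\dots,n\}$ and a direction to differentiate in; because $\V(\T)$ is \emph{not} invariant under $\Pi_{\T}$ here, I cannot simply subtract an averaged Taylor polynomial globally. Instead, I would work vertex by vertex on $R$ and exploit the tensor-product structure of $\Q_1$ together with the symmetries of the cube (see Figure~\ref{fig:cube}).

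The key step is to write $\partial_{x_j}(\Pi_{\T} v)$ on $R$ explicitly in terms of nodal values. Since $\Pi_{\T} v = \sum_{\vero \in \N(R)} Q^1_{\vero} v(\vero)\, \lambda_{\vero}$ on $R$ and $\partial_{x_j}\lambda_{\vero}$ is, on $R$, the product of $\pm 1/h_R^j$ with the one-dimensional $\Q_1$ basis functions in the remaining variables, the derivative $\partial_{x_j}(\Pi_{\T} v)$ is a sum of \emph{differences} $Q^1_{\vero} v(\vero) - Q^1_{\vero'} v(\vero')$ over pairs of vertices $\vero,\vero'$ of $R$ that differ only in the $j$-th coordinate, divided by $h_R^j$, interpolated in the other directions. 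The cancellation property $R^1_{\vero}(p) = 0$ for $p \in \Pol_1$ — equivalently $Q^1_{\vero} p = p$ — lets me subtract an arbitrary affine function: replacing $v$ by $v - \ell$ with $\ell \in \Pol_1$ chosen so that $\partial_{x_j}\ell$ matches the relevant first-order behavior. Concretely, I would subtract the first-degree averaged Taylor polynomial $Q^1_{\vero_0} v$ about one fixed vertex $\vero_0 \in \N(R)$; then $\partial_{x_j}(v - \Pi_{\T} v) = \partial_{x_j}\big( (I-\Pi_{\T})(v - Q^1_{\vero_0} v)\big)$ on $R$, and each nodal difference above becomes a difference of values of $Q^1_{\vero}(v - Q^1_{\vero_0} v)$, which is \emph{small}.

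To estimate these differences I would bound, for each vertex $\vero$ of $R$,
\[
  \big| Q^1_{\vero}(v - Q^1_{\vero_0} v)(\vero) - (v - Q^1_{\vero_0} v)\text{-type terms} \big|
\]
by an $L^\infty$ bound on $Q^1_{\vero}$ of a function whose first derivatives are controlled by $\partial_{x_i}(v - Q^1_{\vero_0} v)$, i.e.\ by $\|\partial_{x_i}\partial_{x_j} v\|$ through \eqref{aniso_v-v_vW1p}, together with the key $A_p$ bound $\|\psi_{\vero}\|_{L^{p'}(\omega^{-p'/p},S_{\vero})}\|1\|_{L^p(\omega,S_{\vero})} \lesssim 1$ used as in Lemmas~\ref{LM:double_approximation} and \ref{LM:aniso_approximation_2}. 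Passing to the reference cube via the anisotropic map \eqref{aniso_map} $\bar x_i = (\vero_i - x_i)/h_{\vero}^i$ turns the weight $\omega$ into an $A_p$-weight with the same constant (Proposition~\ref{pro:properties}\eqref{v}), and on the reference configuration $\diam \bar S_{\vero} \approx 1$, so the hidden constants are uniform. Raising to the $p$-th power, integrating the $\lambda$-factors over $R$ (which contributes the missing $h_R^i$ in the non-$j$ directions while the $1/h_R^j$ cancels one power of $h_R^j$), and summing over the $O(1)$ vertices of $R$ and the finitely overlapping stars yields \eqref{aniso_v - PivW1p}.

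The main obstacle I anticipate is purely bookkeeping rather than conceptual: organizing the combinatorics of the cube's vertices so that the telescoping of nodal values of $\Pi_{\T} v$ along each coordinate direction is transparent, and making sure the correct anisotropic scale $h_R^i$ (rather than $h_R$) is attached to each term — this is exactly where the tensor-product structure of $\Q_1$ and the recursive vertex labeling of Figure~\ref{fig:cube} are used, and where the argument genuinely differs from the simplicial case in \S~\ref{subsec:intP}. One must also verify at the end that $\partial R \cap \partial\Omega = \emptyset$ is what guarantees all vertices of $R$ lie in $\Nin(\T)$, so that the boundary modification in \eqref{Pi_lambda} does not enter and the invariance $\Pi_{\T} Q^1_{\vero_0} v = $ (its nodal interpolant) is exact on $R$.
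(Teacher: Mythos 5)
Your overall skeleton for \eqref{aniso_v - PivW1p} coincides with the paper's: subtract $Q^1_{\vero_0}v$ (the paper takes $\vero_0=\vero_1$ and uses that $\Pi_\T$ reproduces $\mathbb{P}_1$ on elements with $\partial R\cap\partial\Omega=\emptyset$), handle $v-Q^1_{\vero_0}v$ by Lemma~\ref{LM:aniso_approximation}, and telescope $\partial_{x_j}$ of the remaining $\Q_1$ function over vertex pairs differing only in the $j$-th coordinate. The gap is in how you estimate the resulting nodal differences. Bounding them through ``an $L^\infty$ bound on $Q^1_{\vero}$ of a function whose first derivatives are controlled via \eqref{aniso_v-v_vW1p}'' treats the two averaged Taylor polynomials (centered at $\vero$ and $\vero'$) essentially separately, and a short computation shows what that yields: with $w=v-Q^1_{\vero_0}v$ one gets $|Q^1_{\vero}w(\vero)|\lesssim \|\psi_{\vero}\|_{L^{p'}(\omega^{-p'/p},S_\vero)}\sum_{i,k}h^i_Rh^k_R\|\partial_{x_i}\partial_{x_k}v\|_{L^p(\omega,S_R)}$, and after multiplying by $\|\partial_{x_j}\lambda_{\vero}\|_{L^p(\omega,R)}\approx (h_R^j)^{-1}\|1\|_{L^p(\omega,R)}$ the cross terms carry coefficients $h^i_Rh^k_R/h^j_R$ with $i,k\neq j$. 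These derivatives do not even appear in \eqref{aniso_v - PivW1p}, and their coefficients are unbounded under the weak regularity \eqref{shape_reg_weak}, which controls only ratios of the same direction across neighboring elements, not aspect ratios within an element. So your claim that ``the $1/h_R^j$ cancels one power of $h_R^j$'' has no mechanism behind it. The mechanism in the paper is the directional cancellation between the two mollifiers: since $\psi_{\vero}$ and $\psi_{\vero'}$ have identical scales and centers differing only in $x_j$, the difference \eqref{deltaq1} is written, after a change of variables, as $\int_0^1\!\int F_z'(t)\psi(z)\,\mathrm{d}z\,\mathrm{d}t$ with $F_z(t)=P^1v(\cdot+t\theta,\cdot)$ and $\theta$ supported in the $j$-th component with $|\theta|\lesssim h^j_{\vero}$; differentiating $P^1v$ from \eqref{taylor} then produces only $\partial_{x_j}\partial_{x_i}v$ terms, each with the factor $h^j h^i$, and the $h^j$ cancels the $1/h^j$ from $\partial_{x_j}\lambda_{\vero}$. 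Without this step (or an equivalent mean-value argument along the $x_j$-segment joining the two centers) the proof does not close.

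The stability bound \eqref{aniso_stab_grad} is also not ``immediate from Lemma~\ref{LM:stab_PI} with $k=1$'': that lemma gives $|\Pi_\T v|_{W^1_p(\omega,T)}\lesssim h_T^{-1}\|v\|_{L^p(\omega,S_T)}+\|\nabla v\|_{L^p(\omega,S_T)}$, which contains a zeroth-order term absent from \eqref{aniso_stab_grad}; removing it by subtracting a constant and invoking the anisotropic Poincar\'e estimate \eqref{aniso_v-Q_vLp} again produces aspect-ratio factors $h^i_R/h^j_R$, incompatible with a constant depending only on $C_{p,\omega}$, $\sigma$ and $\psi$. In the paper \eqref{aniso_stab_grad} requires its own argument: split off $Q^1_{\vero_1}v$, bound $\|\nabla Q^1_{\vero_1}v\|_{L^p(\omega,R)}$ directly using $\|\psi_{\vero_1}\|_{L^{p'}(\omega^{-p'/p},S_R)}\|1\|_{L^p(\omega,S_R)}\lesssim 1$, and for the discrete part integrate by parts in \eqref{deltaq1} so that the nodal differences involve $v$ (against $\psi$ and $\nabla\psi$) rather than $\nabla v$, and then run the same directional argument to gain the factor $h^j$. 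You should either reproduce that argument or find a substitute; quoting Lemma~\ref{LM:stab_PI} will not do. Your final observation — that $\partial R\cap\partial\Omega=\emptyset$ guarantees all vertices of $R$ are interior nodes so that $\Pi_\T$ reproduces affine functions on $R$ — is correct and is indeed where that hypothesis enters.
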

\begin{proof}
Let us bound the derivative with respect to the first argument
$x_1$. The other ones follow from similar considerations.
As in \cite[Theorem~2.5]{DL:05}, to exploit the geometry of $R$, we label its vertices
in an appropriate way: vertices that differ only in the first
component are denoted $\vero_i$ and $\vero_{i+2^{n-1}}$ for 
$i = 1,\dots , 2^{n-1}$; see Figure~\ref{fig:cube} for the three-dimensional case.

Clearly $v - \Pi_{\T} v =(v - Q^1_{\vero_1}v) + (Q^1_{\vero_1}v-\Pi_{\T} v)$,  
and the difference $v - Q^1_{\vero_1}v$
is estimated by Lemma~\ref{LM:aniso_approximation}.
Consequently,
it suffices to consider $q =  Q^1_{\vero_1}v-\Pi_{\T} v \in \Q_1(R)$. 
Thanks to the special labeling of the vertices we have that 
$\partial_{x_1} \lambda_{\vero_{i+2^{n-1}}} = -\partial_{x_1}\lambda_{\vero_{i}}$. Therefore
\[
  \partial_{x_1} q = \sum_{i=1}^{2^{n}} q(\vero_i) \partial_{x_1} \lambda_{\vero_i} =
  \sum_{i=1}^{2^{n-1}} ( q(\vero_i) - q(\vero_{i+2^{n-1}})) \partial_{x_1} \lambda_{\vero_i},
\]
so that
\begin{equation}
\label{partial_x1}
  \| \partial_{x_1} q \|_{L^p(\omega,R)} \leq
  \sum_{i=1}^{2^{n-1}} | q(\vero_i) - q(\vero_{i+2^{n-1}}) |
  \| \partial_{x_1} \lambda_{\vero_i}\|_{ L^p(\omega,R) }.
\end{equation}
This shows that it suffices to estimate $\delta q(\vero_1) = q(\vero_1) - q(\vero_{1+2^{n-1}})$.
The definitions of $\Pi_{\T}$, $q$, and the averaged Taylor polynomial \eqref{averaged_taylor}, imply that
\begin{equation}
\label{deltaq1}
    \delta q(\vero_1)=
  \int P^1v(x,\vero_{1+2^{n-1}}) \psi_{\vero_{1+2^{n-1}}}(x) \diff x 
  - \int P^1 v(x,\vero_{1+2^{n-1}}) \psi_{\vero_{1}}(x) \diff x,
\end{equation}
whence employing
the operation $\circ$ defined in \eqref{eq:defcirc} and changing variables, we get
\begin{multline*}
  \delta q(\vero_1)  = \int \bigg( P^1 v(\vero_{1+2^{n-1}} - h_{\vero_{1+2^{n-1}}} \circ z, \vero_{1+2^{n-1}})
  \\
   - P^1 v(\vero_{1} - h_{\vero_{1}} \circ z,\vero_{1+2^{n-1}}) \bigg)  \psi(z) \diff z.
\end{multline*}
Define 
\[
 \theta_1 = \vero_{1 +2^{n-1}}^1  - \vero_1^1 + (h_{\vero_1}^1 -  h_{\vero_{1 +2^{n-1}} }^1)z_1,
\]
$\theta = (\theta_1,0,\dots,0)$ and, for $t \in [0,1]$, the function
$F_z(t) = P^1 v(\vero_{1} - h_{\vero_{1}} \circ z + t \theta,\vero_{1+2^{n-1}})$.
Since, for $i=2,\cdots,n$ we have that $h_{\vero_1}^i = h_{\vero_{1+2^{n-1}}}^i$ and 
$\vero_1^i = \vero_{1+2^{n-1}}^i$, by using the definition of $\theta$ we arrive at 
\[
 P^1 v(\vero_{1+2^{n-1}} - h_{\vero_{1+2^{n-1}}} \circ z, \vero_{1+2^{n-1}})
     - P^1 v(\vero_{1} - h_{\vero_{1}} \circ z,\vero_{1+2^{n-1}}) = F_z(1) - F_z(0),
\]
and consequently
\[
  \delta q(\vero_1)= \int (F_z(1) - F_z(0))\psi(z)\diff z  = 
  \int_{0}^1 \int F_{z}'(t)\psi(z) \diff z \diff t. 
\]
Since $\psi$ is bounded and $B = \supp \psi \subset B(0,1)$, it suffices to bound
the integral
\[
 I(t) = \int_{B} |F'_z(t)| \diff z .
\]
Invoking the definition of
$F_z$, we get
$
 F_z'(t) = \nabla P^1 v(\vero_1 - h_{\vero_1} \circ z + t \theta,\vero_{1+2^{n-1}}) \cdot\theta,
 $
which, together with the definition of the polynomial $P^1 v$ given by \eqref{taylor}, yields
\begin{align*}
I(t) & \lesssim \int_{B} | \partial^2_{x_1} v(\vero_1 - h_{\vero_1} \circ z + t \theta)| \,
|\vero_{1+2^{n-1}}^1 -\vero_1^1 + h^1_{\vero_1} z_1 - t \theta_1| \, |\theta_1|\diff z
\\
&+ \sum_{i=2}^n \int_{B}| \partial^2_{x_ix_1} v(\vero_1 - h_{\vero_1} \circ z + t \theta)|
\, |\vero_{1+2^{n-1}}^i - \vero_{1}^i + h^i_{\vero_1} z_i | \, |\theta_1|\diff z
\end{align*}
Now, using that $|z| \leq 1$, $0\leq t \leq 1$, and
the definition of $\theta$, we easily see that
  $|\theta|=|\theta_1|\lesssim h_{\vero_1}^1$ as well as
$|\vero_{1+2^{n-1}}^1 -\vero_1 + h^1_{\vero_1} z_1 - t \theta_1| \lesssim h^1_{\vero_1}$ and
$|\vero_{1+2^{n-1}}^i - \vero_{1}^i - h^i_{\vero_1} z_i | \lesssim h^i_{\vero_1}$ 
for $i=2,\dots n$, whence
\[
I(t)  \lesssim \sum_{i=1}^n h_{\vero_1}^{1} h_{\vero_1}^{i}
\int_{B} | \partial^2_{x_ix_1} v(\vero_1 - h_{\vero_1} \circ z + t \theta)| \diff z.
\]
Changing variables via $y = \vero_1 - h_{\vero_1} \circ z + t \theta$, we obtain
\[
I(t)  \lesssim \frac{1}{h_{\vero_1}^{2} \dots h_{\vero_1}^{n} }
\sum_{i=1}^n h_{\vero_1}^{i}
\int_{S_R}| \partial^2_{x_ix_1} v(y)| \diff y,
\]
where we have used that the support of $\psi$ is mapped into $S_{\vero_1} \subset S_{R}$. 
H{\"o}lder's inequality implies
\[
I(t)  \lesssim \frac{1}{h_{\vero_1}^{2} \dots h_{\vero_1}^{n} } \| 1 \|_{L^{p'}(\omega^{-p'/p},S_R)}\sum_{i=1}^n h_{\vero_1}^{i}
\| \partial^2_{x_ix_1} v \|_{L^p(\omega,S_R)}, 
\]
which combined with $\| \partial_{x_1} \lambda_{\vero_1}\|_{L^p(\omega,R) } 
\| 1 \|_{L^{p'}(\omega^{-\poverp},S_R)}\lesssim h_{\vero_1}^{2} \dots h_{\vero_1}^{n}$,
because $\omega \in A_p(\R^n)$, gives the following
bound for the first term in \eqref{partial_x1}
\begin{equation*}
\label{qv2aux2}
  \delta q(\vero_1) \| \partial_{x_1} \lambda_{\vero_1}\|_{ L^p(\omega,R) } 
\lesssim 
\sum_{i=1}^n h_{\vero_1}^{i}
\| \partial^2_{x_ix_1} v \|_{L^p(\omega,S_R)}.
\end{equation*}
This readily yields \eqref{aniso_v - PivW1p}.

The estimate \eqref{aniso_stab_grad} follows along the same arguments
as in \cite[Theorem 4.7]{NOS}. 
In fact, by the triangle inequality
\begin{equation}
\label{dv-Piv}
 \| \nabla\Pi_{\T} v\|_{ L^p(\omega,R) } \leq 
  \| \nabla Q_{\vero_1}^1 v\|_{ L^p(\omega,R) } 
  + \| \nabla(Q_{\vero_1}^1 v- \Pi_{\T} v)\|_{ L^p(\omega,R) }.
\end{equation}
The estimate of the first term on the right hand side of \eqref{dv-Piv} begins by noticing that
the definition of $\psi_{\vero_1}$ and the
definition \eqref{A_pclass} of the $A_p$ class imply
\[
  \| \psi_{\vero_1}\|_{L^{p'}(\omega^{-\poverp},S_R)} \| 1\|_{L^p(\omega,S_R)} \lesssim 1.
\] 
This, together with the definition \eqref{averaged_taylor} of regularized Taylor polynomial $Q^1_{\vero_1}v$, 
yields
\begin{align*}
 \|\nabla Q^1_{\vero_1}v\|_{L^p(\omega,R)} &\leq \| \nabla v\|_{L^p(\omega,S_R) }
  \| \psi_{\vero_1} \|_{L^{p'}(\omega^{-\poverp},S_R)} \| 1 \|_{L^{p}(\omega,S_R)} \\
  &\lesssim \| \nabla v\|_{L^p(\omega,S_R)}.
\end{align*}
To estimate the second term of the right hand side of
\eqref{dv-Piv}, we integrate by parts \eqref{deltaq1}, using that 
$\psi_{\vero_i} =0$ on $\partial S_{\vero_i}$ for $i=1,\dots,n$, to get
\begin{equation*}
\label{I_1+I_2}
  \begin{aligned}
    \delta q(\vero_1) & = (n+1)\left(\int v(x) \psi_{\vero_{1+2^{n-1}}}(x)\diff x - 
    \int v(x) \psi_{\vero_{1}}(x)\diff x\right) \\
    & - \int v(x)( \vero_{1+2^{n-1}}- x)\cdot \nabla \psi_{{\vero_{1+2^{n-1}}}}(x)\diff x
    + \int v(x)( \vero_{1}- x) \cdot \nabla \psi_{\vero_1}(x)\diff x  .
  \end{aligned}
\end{equation*}
In contrast to \eqref{deltaq1}, we have now created differences which
involve $v(x)$ instead of $\nabla v(x)$. However,
the same techniques used to derive \eqref{aniso_v - PivW1p} yield
\[
 |\delta q(\vero_1)| \lesssim \frac{1}{h_{\vero_1}^2 \dots h_{\vero_1}^n} 
\| \nabla v \|_{ L^{p}(\omega,S_R) } \| 1\|_{L^{p'}(\omega^{-p'/p},S_R)},
\]
which, since 
$\| \partial_{x_1} \lambda_{\vero_1}
\|_{L^{p'}(\omega^{-\poverp},S_R)} \| 1\|_{L^p(\omega,S_R)} \lesssim
h_{\vero_1}^{2} \dots h_{\vero_1}^{n}$, 
results in
\begin{equation*}
 | \delta q(\vero_1)|\| \partial_{x_1} \lambda_{\vero_1}\|_{ L^p(\omega,R) }  \lesssim \| \nabla v \|_{L^{p}(\omega,S_R)]^n }. 
\end{equation*}
Replacing this estimate in \eqref{partial_x1}, we get 
\[
\| \nabla( Q^1_{\vero_1} v- \Pi_{\T} v)\|_{ L^p(\omega,R)]^n } \lesssim \| \nabla v\|_{ L^p(\omega,S_R) },
\]
which implies the desired result \eqref{aniso_stab_grad}.
This completes the proof.
\end{proof}

Let us now derive a second order anisotropic interpolation error estimates for
the weighted $L^p$-norm, which is novel even for unweighted norms.
For the sake of simplicity,
and because the arguments involved are rather technical (as in
Theorem~\ref{full_aniso_W1p}), we 
prove the result in two dimensions.
However, analogous results can be obtained in three
and more dimensions by using similar arguments.

\begin{theorem}[anisotropic $L^p$-weighted interpolation estimate II]
\label{full_aniso_Lp_II}
Let $\T$ satisfy \eqref{shape_reg_weak} and $R \in \T$
such that $\partial R \cap \partial \Omega = \emptyset$.
If $v \in W^2_p(\omega,S_{R})$, then we have
\begin{equation}
\label{double:v - PivLp}
  \| v - \Pi_{\T} v \|_{L^p(\omega,R)} \lesssim \sum_{i,j=1}^n h_{R}^ih_{R}^j
  \| \partial_{x_i} \partial_{x_j} v\|_{L^p(\omega,S_{R})},
\end{equation}
where the hidden constant in the inequality above depends only on $C_{p,\omega}$, $\sigma$ and $\psi$.
\end{theorem}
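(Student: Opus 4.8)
The plan is to adapt the strategy behind Theorem~\ref{full_aniso_Lp}, replacing $Q^0_{\vero}$ by $Q^1_{\vero}$ and the first-order estimate by the second-order Lemma~\ref{LM:aniso_approximation_2}, while compensating for the fact that $\V(\T)$ is not $\Pi_\T$-invariant by estimating nodal differences directly, in the spirit of the proof of Theorem~\ref{full_aniso_W1p}. As announced, I would take $n=2$. Fix a vertex $\vero_1\in\Nin(R)$; since $\partial R\cap\partial\Omega=\emptyset$ every vertex of $R$ lies in $\Nin(\T)$, and $R\subset S_{\vero_1}\subset S_R$ with $h^i_{\vero_1}\approx h^i_R$ by \eqref{shape_reg_weak}. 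The first step is the splitting
\[
  v-\Pi_\T v=(v-Q^1_{\vero_1}v)+(Q^1_{\vero_1}v-\Pi_\T v).
\]
For the first summand, since $R\subset S_{\vero_1}$ I would simply invoke Lemma~\ref{LM:aniso_approximation_2} at $\vero_1$ and enlarge the patch to $S_R$, obtaining $\|v-Q^1_{\vero_1}v\|_{L^p(\omega,R)}\lesssim\sum_{i,j}h^i_Rh^j_R\|\partial_{x_i}\partial_{x_j}v\|_{L^p(\omega,S_R)}$.

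The work is in the second summand $q:=Q^1_{\vero_1}v-\Pi_\T v$. Since $Q^1_{\vero_1}v$ is affine, hence in $\Q_1(R)$, and $\Pi_\T v|_R\in\Q_1(R)$, the function $q|_R$ equals its $\Q_1$-Lagrange interpolant, so $\|q\|_{L^p(\omega,R)}\le\sum_{\vero\in\N(R)}|q(\vero)|\,\|\lambda_\vero\|_{L^p(\omega,R)}\le\omega(R)^{1/p}\sum_{\vero\in\N(R)}|q(\vero)|$. By the definition of $\Pi_\T$ one has $q(\vero)=Q^1_{\vero_1}v(\vero)-Q^1_{\vero}v(\vero)$ for every $\vero\in\N(R)$, so $q(\vero_1)=0$ and only three nodal differences survive. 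Using \eqref{averaged_taylor} and the changes of variables $x=\vero-h_{\vero}\circ z$ and $x=\vero_1-h_{\vero_1}\circ z$ I would write
\[
  q(\vero)=\int_B\bigl(P^1v(\vero-h_{\vero}\circ z,\vero)-P^1v(\vero_1-h_{\vero_1}\circ z,\vero)\bigr)\psi(z)\diff z,
\]
and connect the two arguments by the segment $x_{z,t}=\vero_1-h_{\vero_1}\circ z+t\,\theta(z)$, $t\in[0,1]$, where $\theta(z)=(\vero-\vero_1)-(h_{\vero}-h_{\vero_1})\circ z$ is chosen precisely so that $x_{z,0}=\vero_1-h_{\vero_1}\circ z$ and $x_{z,1}=\vero-h_{\vero}\circ z$. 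With $F_z(t)=P^1v(x_{z,t},\vero)$ this gives $q(\vero)=\int_0^1\!\int_B F_z'(t)\psi(z)\diff z\diff t$ and $F_z'(t)=\nabla_xP^1v(x_{z,t},\vero)\cdot\theta(z)$.

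The point that makes the argument work — and the reason no extra cancellation step is needed here — is that the first-order part of $P^1$ is reproduced exactly, so that $\nabla_xP^1v(x,y)=D^2v(x)(y-x)$ is purely second order; hence $F_z'(t)=\sum_{i,j}\partial_{x_i}\partial_{x_j}v(x_{z,t})\,(\vero-x_{z,t})_i\,\theta(z)_j$. Since $|z|\le1$, $t\in[0,1]$ and $h^k_{\vero},h^k_{\vero_1}\approx h^k_R$, each factor satisfies $|(\vero-x_{z,t})_i|\lesssim h^i_R$ and $|\theta(z)_j|\lesssim h^j_R$. Changing variables back through $y=x_{z,t}$ — whose Jacobian is $\approx(h^1_Rh^2_R)^{-1}$ and whose image lies in $S_R$ — and using Hölder's inequality yields
\[
  |q(\vero)|\lesssim\frac{\|1\|_{L^{p'}(\omega^{-p'/p},S_R)}}{h^1_Rh^2_R}\sum_{i,j}h^i_Rh^j_R\,\|\partial_{x_i}\partial_{x_j}v\|_{L^p(\omega,S_R)}.
\]
Multiplying by $\omega(R)^{1/p}$ and using $\omega(R)^{1/p}\|1\|_{L^{p'}(\omega^{-p'/p},S_R)}\lesssim h^1_Rh^2_R$ — a consequence of $\omega\in A_p(\R^n)$ and \eqref{shape_reg_weak}, exactly the type of bound already exploited in the proof of Theorem~\ref{full_aniso_W1p} — and summing over the boundedly many vertices of $R$ controls $\|q\|_{L^p(\omega,R)}$ by the right-hand side of \eqref{double:v - PivLp}. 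Adding the two summands finishes the two-dimensional case, and the general case is identical up to notation. I expect the main obstacle to be purely bookkeeping: choosing $\theta(z)$ so that the endpoints match, checking that each of the resulting factors in $F_z'$ carries exactly one power of the correct anisotropic length $h^i_R$, and handling the $A_p$-type estimates on the anisotropic patch $S_R$ as in the earlier anisotropic results.
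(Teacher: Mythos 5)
Your proposal is correct and follows the paper's strategy almost verbatim: the same splitting $v-\Pi_\T v=(v-Q^1_{\vero_1}v)+(Q^1_{\vero_1}v-\Pi_\T v)$, Lemma~\ref{LM:aniso_approximation_2} for the first piece, reduction of $q=Q^1_{\vero_1}v-\Pi_\T v$ to its nodal values $q(\vero)=Q^1_{\vero_1}v(\vero)-Q^1_{\vero}v(\vero)$, and the line-integral argument with $F_z(t)=P^1v(x_{z,t},\vero)$, anisotropic change of variables, H\"older and the $A_p$-type bound $\omega(R)^{1/p}\|1\|_{L^{p'}(\omega^{-p'/p},S_R)}\lesssim h_R^1\cdots h_R^n$, exactly as in Theorem~\ref{full_aniso_W1p}. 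The one genuine deviation is the ``far'' vertex (e.g.\ $\vero_4$ in two dimensions): the paper telescopes through an intermediate vertex, $q(\vero_4)=\bigl(Q^1_{\vero_1}v(\vero_4)-Q^1_{\vero_3}v(\vero_4)\bigr)+\bigl(Q^1_{\vero_3}v(\vero_4)-Q^1_{\vero_4}v(\vero_4)\bigr)$, so that each difference involves vertices differing in a single coordinate and $\theta$ has one nonzero component (using the exact symmetries $\vero_1^i=\vero_2^i$, $h^i_{\vero_1}=h^i_{\vero_2}$ in the remaining directions), whereas you connect $\vero_1$ and $\vero$ in one shot with the vector-valued $\theta(z)=(\vero-\vero_1)-(h_\vero-h_{\vero_1})\circ z$. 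Your variant is legitimate: since $\nabla_xP^1v(x,y)=D^2v(x)(y-x)$ is purely second order, the only inputs needed are $|(\vero-x_{z,t})_i|\lesssim h_R^i$, $|\theta_j(z)|\lesssim h_R^j$, the nondegenerate diagonal Jacobian $\prod_i\bigl((1-t)h^i_{\vero_1}+th^i_\vero\bigr)\approx\prod_i h_R^i$, and the containment of the image in $S_R$, all of which follow from \eqref{shape_reg_weak}; this slightly simplifies the bookkeeping for the corner vertex and makes the $n$-dimensional extension more transparent, at no loss compared with the paper's argument.
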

\begin{proof}
To exploit the symmetry of $R$, 
we label its vertices of $R$ according to Figure~\ref{fig:cube}:
$\vero_2=\vero_1+(a,0), \vero_3=\vero_1+(0,b),\vero_4=\vero_1+(a,b)$.
We write $v - \Pi_{\T} v = (v - Q^1_{\vero_1} v) + (Q^1_{\vero_1} v - \Pi_{\T} v)$.
The difference $v - Q^1_{\vero_1} v$ is estimated by Lemma
\ref{LM:aniso_approximation_2}. Consequently,
it suffices to estimate $q = Q^1_{\vero_1} v - \Pi_{\T} v$.

Since $q \in \V(\T)$,
\begin{equation}
\label{q}
 q = \sum_{i=1}^4 q(\vero_i) \lambda_{\vero_i} \implies
 \| q \|_{L^p(\omega,R)} \leq \sum_{i=1}^4 |q(\vero_i)|  \| \lambda_{\vero_i} \|_{L^p(\omega,R)},
\end{equation}
and we only need to deal with $q(\vero_i)$ for $i=1,\dots,4$.
Since $q(\vero_1) = 0$, in accordance with the definition \eqref{Pi_lambda} of
$\Pi_{\T}$, we just consider $i =2$.
Again, by \eqref{Pi_lambda}, we have
\begin{align*}
 q(\vero_2) & = Q^1_{\vero_1} v(\vero_2) - Q^1_{\vero_2}v(\vero_2) 
\end{align*}
which, together with the definition of the averaged Taylor 
polynomial \eqref{averaged_taylor} and a change of variables, yields
\begin{align*}
q(\vero_2) & = \int  \left( P^1 v(\vero_1 - h_{\vero_1} \circ z, \vero_2) - 
P^1 v(\vero_2 - h_{\vero_2} \circ z, \vero_2) \right) \psi(z) \diff z. 
\end{align*}
To estimate this integral, we define $\theta = (\theta_1,0)$, where $\theta_1 = \vero_1^1 - \vero_2^1 + 
(h^1_{\vero_2} - h^1_{\vero_1})z_1$, and the function
$F_z(t) = P^1v(\vero_2 - h_{\vero_2} \circ z + t \theta , \vero_2)$. Exploiting
the symmetries of $R$, \ie using that $\vero_1^2 = \vero_2^2$ and $h_{\vero_1}^2 = h_{\vero_2}^2$, we arrive at
\[
 q(\vero_2) = \int \big( F_z(1) - F_z(0) \big) \psi(z) \diff z = 
\int_0^1 \int F_z'(t) \psi(z) \diff z \diff t.
\]
By using the definition of the Taylor polynomial $P^1 v$ given in \eqref{taylor}, we obtain
\[
F_z'(t) = \theta D^2 v (\vero_2 - h_{\vero_2} \circ z + t \theta) ( h_{\vero_2} \circ z - t \theta)
\]
which, together with the definition of $\theta$ and the inequalities
$ | \theta_1 | \lesssim h_{\vero_2}^1$, $|  h^1_{\vero_2} z_1 - t \theta_1  | \lesssim h^1_{\vero_2} $
and $|  h^2_{\vero_2} z_2 | \lesssim h_{\vero_2}^2$, implies
\begin{align*}
 \int F_z'(t) \psi(z) \diff z 
 & \leq \int | \partial_{x_1 x_1}  v(\vero_2 - h_{\vero_2} \circ z + t \theta)|
 \, |  h^1_{\vero_2} z_1 - t \theta_1| \, | \theta_1 | \, | \psi(z) | \diff z
\\
 & + \int | \partial_{x_2 x_1}  v(\vero_2 - h_{\vero_2} \circ z + t \theta)|
 \, | h^2_{\vero_2} z_2 | \, | \theta_1 | \, | \psi(z) | \diff z
 \\
 & \lesssim h_{\vero_2}^1h_{\vero_2}^1 \int | \partial_{x_1 x_1}
 v(\vero_2 - h_{\vero_2} \circ z + t \theta)| \, | \psi(z) | \diff z
\\
 & + h_{\vero_2}^2 h_{\vero_2}^1 \int |\partial_{x_2 x_1}
 v(\vero_2 - h_{\vero_2} \circ z + t \theta)| \, | \psi(z) | \diff z.
\end{align*}
The change of variables $y = \vero_2 - h_{\vero_2} \circ z + t \theta $ yields
\[
  \int F_z'(t) \psi(z) \diff z \lesssim 
  \left( \frac{h_{\vero_2}^1}{h_{\vero_2}^2}\| \partial_{x_1x_1} v\|_{L^p(\omega,S_R)}  +  \| \partial_{x_2x_1} v\|_{L^p(\omega,S_R)} \right) 
  \| 1 \|_{L^{p'}(\omega^{-p'/p},S_R)},
\]
where we used H{\"o}lder inequality, that the support of $\psi$ is mapped into $S_R$, and $\psi \in L^{\infty}(\R^n)$. Finally,
using the $A_p$-condition, we conclude
\[
 |q(\vero_2)| \| \lambda_{\vero_2}\|_{L^p(\omega,R)}
 \lesssim ( h_{\vero_2}^1)^2 \| \partial_{x_1x_1}
   v\|_{L^p(\omega,S_R)} + 
    h_{\vero_2}^1 h_{\vero_2}^2 \| \partial_{x_2x_1} v\|_{L^p(\omega,S_R)}.
\]

The same arguments above apply to the remaining terms in \eqref{q}.
For the term labeled $i=3$, we obtain
\[
 |q(\vero_3)| \| \lambda_{\vero_3}\|_{L^p(\omega,R)}
 \lesssim (h_{\vero_3}^2)^2 \| \partial_{x_2x_2} v\|_{L^p(\omega,S_R)} + h_{\vero_3}^1 h_{\vero_3}^2 \| \partial_{x_1x_2} v\|_{L^p(\omega,S_R)},
\]
whereas for the term labeled $i=4$, rewritten first in the form
\begin{align*}
q(\vero_4) 
 = \big( Q_{\vero_1}^1 v (\vero_4) -  Q_{\vero_3}^1 v (\vero_4) \big)
+ \big( Q_{\vero_3}^1 v (\vero_4) - Q_{\vero_4}^1 v (\vero_4) \big),
\end{align*}
we deduce
\[
 |q(\vero_4)| \| \lambda_{\vero_4}\|_{L^p(\omega,R)}
\lesssim \sum_{i,j=1}^2 h_{\vero_4}^ih_{\vero_4}^j
  \| \partial_{x_i} \partial_{x_j} v\|_{L^p(\omega,S_{R})}.
\]

Finally, replacing the previous estimates back into \eqref{q}, 
and using the shape regularity properties
$h_{\vero_i}^j \approx h_{R}^j $ for $i=1,\dots,4$ and $j=1,2$, which
result from \eqref{shape_reg_weak}, shows the 
desired anisotropic estimate \eqref{double:v - PivLp}.
\end{proof}

Let us comment on the extension of  
the interpolation estimates of Theorem~\ref{full_aniso_W1p} to elements that 
intersect the Dirichlet boundary, where the functions to be approximated vanish. 
The proof is very technical and is an adaptation of the 
arguments of \cite[Theorem 3.1]{DL:05} and \cite[Theorem~4.8]{NOS}, together
with the ideas involved in the proof of Theorem \ref{full_aniso_W1p}
to deal with the Muckenhoupt weight $\omega \in A_p(\R^n)$.

\begin{theorem}[stability and local interpolation: Dirichlet elements]
\label{TH:v - PivH1boundary}
Let $R \in \T$ be a boundary element. If $ v \in W^1_p(\omega,S_R)$ 
and $v = 0$ on $\partial R \cap \partial \Omega$, then we have 
\begin{equation}
\label{dPIcontinuous_boundary}
  \|\nabla \Pi_{\T} v \|_{L^p(\omega,R) } \lesssim 
  \| \nabla v\|_{L^p(\omega,S_R) }.
\end{equation}
Moreover, if $ v \in W^2_p(\omega,S_R)$, then
\begin{equation}
\label{v - PivW1p_boundary}
  \|\partial_{x_j}( v - \Pi_{\T} v) \|_{L^p(\omega,R)} \lesssim 
  \sum_{i=1}^n h_R^i \| \partial_{x_j} \partial_{x_i} v \|_{L^p(\omega,S_R)}. 
\end{equation}
for $j=1,\dots,n$. 
The hidden constants in both inequalities depend only on $C_{p,\omega}$, $\sigma$ and $\psi$.
\end{theorem}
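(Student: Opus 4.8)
The plan is to follow the proof of Theorem~\ref{full_aniso_W1p} and of \cite[Theorem~3.1]{DL:05}, estimating one partial derivative $\partial_{x_j}$ at a time and exploiting the symmetry of the cube $R$. Relabel the vertices of $R$ as in Figure~\ref{fig:cube} so that $\vero_i$ and $\vero_{i+2^{n-1}}$, $1\le i\le 2^{n-1}$, differ only in the $x_j$-component; then $\partial_{x_j}\lambda_{\vero_{i+2^{n-1}}}=-\partial_{x_j}\lambda_{\vero_i}$ and, for every $q\in\Q_1(R)$,
\[
 \|\partial_{x_j}q\|_{L^p(\omega,R)}\le\sum_{i=1}^{2^{n-1}}|\delta q(\vero_i)|\,\|\partial_{x_j}\lambda_{\vero_i}\|_{L^p(\omega,R)},\qquad \delta q(\vero_i):=q(\vero_i)-q(\vero_{i+2^{n-1}}).
\]
For \eqref{dPIcontinuous_boundary} I would apply this with $q=\Pi_{\T}v$, and for \eqref{v - PivW1p_boundary} I would first split $v-\Pi_{\T}v=(v-Q^1_{\vero_1}v)+(Q^1_{\vero_1}v-\Pi_{\T}v)$, bound $\partial_{x_j}(v-Q^1_{\vero_1}v)$ by Lemma~\ref{LM:aniso_approximation}, and apply the display above to the $\Q_1$-piece $q=Q^1_{\vero_1}v-\Pi_{\T}v$; by relabeling it then suffices to estimate $\delta q(\vero_1)$. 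The only new feature relative to the interior case of Theorem~\ref{full_aniso_W1p} is that a vertex of $R$ may lie on $\partial\Omega$, where $\Pi_{\T}v$ vanishes by \eqref{Pi_lambda} and, by hypothesis, so does the trace of $v$.

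Next I would split the vertex pairs into three cases. If both $\vero_1$ and $\vero_{1+2^{n-1}}$ are interior, then $\delta q(\vero_1)$ is exactly the quantity estimated in Theorem~\ref{full_aniso_W1p} (resp.\ \cite[Theorem~3.1]{DL:05}) and contributes the asserted bounds without modification. If both lie on $\partial\Omega$, then $\delta q(\vero_1)=0$. The genuinely new case is when exactly one of them, say $\vero_{1+2^{n-1}}$, lies on $\partial\Omega$; since the two vertices differ only in $x_j$, the face of $R$ lying on $\partial\Omega$ that contains $\vero_{1+2^{n-1}}$ is necessarily perpendicular to the $x_j$-axis, say $\{x_j=c\}$, and $\delta q(\vero_1)$ collapses to the value of a single averaged Taylor polynomial of $v$ taken about the interior vertex $\vero_1$, because the vanishing $\Pi_{\T}v$-contribution at $\vero_{1+2^{n-1}}$ drops out.

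To handle this last case I would use that $v=0$ on $\{x_j=c\}$. For \eqref{dPIcontinuous_boundary}, $\delta q(\vero_1)=Q^1_{\vero_1}v(\vero_1)$; writing this in the integrated-by-parts form used in the proof of Theorem~\ref{full_aniso_W1p} (so that only $v$, and not $\nabla v$, multiplies $\psi_{\vero_1}$ and $\nabla\psi_{\vero_1}$), representing $v(x)=\int_c^{x_j}\partial_{x_j}v\,\diff s$ and using $|x_j-c|\lesssim h^j_{\vero_1}$ on $\supp\psi_{\vero_1}$, a Fubini argument followed by a \emph{single} H\"older inequality against $\omega$ over the whole $n$-dimensional patch — introducing the weight only at this last step — gives $|\delta q(\vero_1)|\lesssim h^j_{\vero_1}(h^1_{\vero_1}\cdots h^n_{\vero_1})^{-1}\|\partial_{x_j}v\|_{L^p(\omega,S_R)}\|1\|_{L^{p'}(\omega^{-\poverp},S_R)}$; combined with the $A_p$-identity for $\|\partial_{x_j}\lambda_{\vero_1}\|_{L^p(\omega,R)}\|1\|_{L^{p'}(\omega^{-\poverp},S_R)}$ from the proof of Theorem~\ref{full_aniso_W1p}, this yields $\lesssim\|\nabla v\|_{L^p(\omega,S_R)}$. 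For \eqref{v - PivW1p_boundary} I would run the same reduction on $w:=v-\ell$ with $\ell(x):=(x_j-c)\,Q^0_{\vero_1}(\partial_{x_j}v)$ the linear polynomial vanishing on $\{x_j=c\}$: since $\vero_{1+2^{n-1}}$ has $x_j$-coordinate $c$, replacing $v$ by $w$ leaves $\delta q(\vero_1)$ unchanged; $Q^0_{\vero_1}(\partial_{x_j}w)=0$ removes the first-order term; and $\|\partial_{x_j}w\|_{L^p(\omega,S_R)}$ is controlled, by Lemma~\ref{LM:aniso_approximation}, by $\sum_k h^k_{\vero_1}\|\partial_{x_j}\partial_{x_k}v\|_{L^p(\omega,S_R)}$. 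Summing over $i$, invoking \eqref{shape_reg_weak} to replace $h^i_{\vero_k}$ by $h^i_R$, and using the finite overlap of the patches $S_R$ then gives \eqref{dPIcontinuous_boundary} and \eqref{v - PivW1p_boundary}.

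I expect the main obstacle to be organizational rather than a single hard estimate: one must enumerate the ways a rectangular boundary element can meet $\partial\Omega$ (along one or several faces, including corners) and, for each direction $x_j$, sort the vertex pairs correctly into the three cases above; and one must introduce the Muckenhoupt weight only at the final H\"older step, so that the one-dimensional ``vanishing trace'' identity $v(x)=\int_c^{x_j}\partial_{x_j}v$ and the $n$-dimensional $A_p$-structure cooperate — it is exactly here that $\omega\in A_p(\R^n)$, rather than a merely one-dimensional Muckenhoupt condition, is used, which is why the arguments of \cite[Theorem~3.1]{DL:05} and \cite[Theorem~4.8]{NOS}, together with the treatment of the weight in the proof of Theorem~\ref{full_aniso_W1p}, carry over.
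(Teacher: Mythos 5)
Your overall route is the intended one: the paper does not actually write out this proof (it only says it is an adaptation of \cite[Theorem~3.1]{DL:05} and \cite[Theorem~4.8]{NOS} combined with the weighted techniques of Theorem~\ref{full_aniso_W1p}), and your outline is precisely that adaptation. In particular, the vertex-pairing reduction, the integrated-by-parts form of $\delta q(\vero_1)$ so that only $v$ (not $\nabla v$) meets $\psi_{\vero_1}$ and $\nabla\psi_{\vero_1}$, the identity $v(x)=\int_c^{x_j}\partial_{x_j}v(x',s)\,\diff s$ on a boundary face $\{x_j=c\}$ perpendicular to the differentiation direction, the single weighted H\"older step at the end, and the correction $\ell(x)=(x_j-c)\,Q^0_{\vero_1}(\partial_{x_j}v)$ all check out: since $Q^1_{\vero_1}\ell=\ell$ vanishes at the boundary vertex, $\delta q(\vero_1)$ is indeed unchanged, $w=v-\ell$ still vanishes on the face, and $\partial_{x_j}w=\partial_{x_j}v-Q^0_{\vero_1}\partial_{x_j}v$ is second-order small by Lemma~\ref{LM:aniso_approximation}. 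So the ``genuinely new'' mixed case is handled correctly.

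There is, however, one concrete flaw in your case list: the assertion that $\delta q(\vero_1)=0$ when \emph{both} vertices of a pair lie on $\partial\Omega$ is true only for the stability bound, where you chose $q=\Pi_{\T}v$ and both nodal values vanish. For \eqref{v - PivW1p_boundary} you work with $q=Q^1_{\vero_1}v-\Pi_{\T}v$, and at such a pair $\delta q=Q^1_{\vero_1}v(\vero_i)-Q^1_{\vero_1}v(\vero_{i+2^{n-1}})=\pm\,h^j\,Q^0_{\vero_1}(\partial_{x_j}v)$, the difference of an affine polynomial at two points separated by $h^j$ in the $x_j$ direction. This is a first-order quantity, and inserting it into your display only yields $\lesssim\|\partial_{x_j}v\|_{L^p(\omega,S_R)}$, not the asserted second-order bound; the configuration genuinely occurs (e.g.\ in 2D, the two vertices on a boundary face $\{x_i=c\}\subset\partial\Omega$ form such a pair when $j\neq i$). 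The repair stays inside your framework but requires one more use of the boundary condition: since $v\in W^2_p$ vanishes identically on that face and $x_j$ is tangential to it, the trace of $\partial_{x_j}v$ vanishes there as well, so $\partial_{x_j}v(x)=\int_c^{x_i}\partial_{x_i}\partial_{x_j}v\,\diff s$ and your Fubini/H\"older step gives $|Q^0_{\vero_1}(\partial_{x_j}v)|\lesssim (h^1_{\vero_1}\cdots h^n_{\vero_1})^{-1}h^i_{\vero_1}\|\partial_{x_i}\partial_{x_j}v\|_{L^p(\omega,S_R)}\|1\|_{L^{p'}(\omega^{-p'/p},S_R)}$, which together with the $A_p$ bound for $\|\partial_{x_j}\lambda_{\vero_1}\|_{L^p(\omega,R)}$ restores the admissible term $h^i_R\|\partial_{x_i}\partial_{x_j}v\|_{L^p(\omega,S_R)}$ --- exactly the device of \cite{DL:05,NOS} for Dirichlet faces. (Two smaller caveats: your one-dimensional identity also uses $v=0$ on the part of $\{x_j=c\}$ covered by the projection of $\supp\psi_{\vero_1}$, which can exceed $\partial R\cap\partial\Omega$; and a boundary vertex not lying on a face of $R$ contained in $\partial\Omega$ --- possible at re-entrant corners of domains decomposed into $n$-rectangles --- is not covered by your trichotomy nor by the hypothesis as stated. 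Both are harmless in the intended setting $\Omega=(0,1)^n$ with $v$ vanishing on $\partial\Omega$, but should be flagged.)
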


\section{Interpolation estimates for different metrics}
\label{sub:sec:diff_weight}

Given $v \in W^1_p(\omega,S_T)$ with $\omega \in A_{p}(\R^n)$ and $p \in (1,\infty)$,
the goal of this section is to derive local interpolation estimates
for $v$ in the space $L^q(\rho,T)$, with weight $\rho \neq \omega $ 
and Lebesgue exponent $q \ne p$. To derive such 
an estimate, it is necessary to ensure that the function $v$ belongs to $L^q(\rho,T)$, that is we need to
discuss embeddings between weighted Sobolev spaces with different
weights and Lebesgue exponents.

Embedding results in spaces of weakly differentiable functions are fundamental in the analysis of partial 
differential equations. They provide some basic tools in the study of existence, uniqueness and 
regularity of solutions. To the best of our knowledge, the first to prove such a result
was S.L.~Sobolev in 1938 \cite{Sobolev:38}. Since then, a great deal
of effort has been devoted to studying 
and improving such inequalities; see, for instance, \cite{MR1450401,MR0374877,MR1014685}.
In the context of weighted Sobolev spaces, there is an abundant literature that studies the dependence
of this result on the properties of the weight; 
see \cite{FGW:94,GU,GO:88,H:96,HK:00,HS:08,Haroske}.

Let us first recall the embedding results in the classical case,
which will help us draw an analogy 
for the weighted case. We recall the \emph{Sobolev number} of $W^m_p(\Omega)$
\[
 \Sob(W^m_p) = m - \frac{n}{p},
\]
which governs the scaling properties of the seminorm $| v |_{W^m_p(\Omega)}$:
the change of variables $\hat{x} = x/h$ transforms 
$\Omega$ into $\hat\Omega$ and $v$ into $\hat v$, while 
the seminorms scale as
\[
 |\hat v |_{W^m_p(\hat \Omega)} = 
h^{\Sob({W^m_p})} | v|_{W^m_p(\Omega)}.
\]
With this notation classical embeddings \cite[Theorem 7.26]{GT} can be
written in a concise way: if
$\Omega$ denotes an open and bounded domain with Lipschitz boundary,
$1 \leq p < n$ and $\Sob(W^1_p) \geq \Sob(L^q)$, then 
$\Wonepo(\Omega) \hookrightarrow L^{q}(\Omega)$ and
\begin{equation}
\label{unweighted_embedding}
\| v\|_{L^q(\Omega)} \lesssim 
\diam(\Omega)^{\Sob(W^1_p) - \Sob(L^q) }
 \| \nabla v \|_{L^p(\Omega)}
\end{equation}
for all $v \in \Wonepo(\Omega)$. When $\Sob(W^1_p) > \Sob(L^q)$ the embedding is compact.
Results analogous to \eqref{unweighted_embedding} in the weighted setting have been studied
in \cite{CW:85,FGW:94,MR2454455,MR1052009} for $n>1$.
For $n=1$, if $\Omega = (0,a)$, $v \in W^1_p(\omega,\Omega)$, and 
$\omega \in A_p(\R^n)$, Proposition \ref{pro:loc_int} yields $v \in W_1^1(\Omega)$.
Consequently $v \in L^{\infty}(\Omega)$, and then $v \in L^q(\rho,\Omega)$ for any weight $\rho$
and $q \in (1,\infty)$. However, to gain intuition on the explicit
dependence of the embbedding constant in terms of the weights and the 
Lebesgue measure of the domain, let us consider the trivial case 
$n=1$ in more detail.
To simplify the discussion assume that $v(0) = v(a) = 0$. We thus have
\begin{align*}
 \int_{0}^{a} | v(x) |^q \rho(x) \diff x & = \int_0^{a} \rho(x) \left| \int_0^x v'(s) \omega(s)^{1/p} \omega(s)^{-1/p} \diff s\right|^q \diff x 
\\
& \leq \int_0^{a} \rho(x) \left( \int_0^x \omega(s) |v'(s)|^p \diff s \right)^{q/p}
\left( 
\int_0^x \omega(s)^{-\poverp } \diff s \right)^{q/p'} 
\diff x
\end{align*}
whence invoking the definition of the the Muckenhoupt class
\eqref{A_pclass} we realize that
\begin{equation*}
\int_{0}^{a} | v(x) |^q \rho(x) \diff x
\lesssim \| v' \|^q_{L^p(\omega,\Omega)}  
|\Omega|^{q} \rho(\Omega)\omega(\Omega)^{-q/p}.
\end{equation*}
The extension of this result to the $n$-dimensional
case has been studied in \cite{CW:85,FGW:94,MR2454455} and is reported
in the next two theorems; see \cite{CW:85} for a discussion.

\begin{theorem}[embeddings in weighted spaces]
\label{thm:embeddingdiffmetricsI}
Let $\omega \in A_p(\R^n)$, $p \in (1,q]$,
and $\rho$ be a weight that satisfies the strong doubling property
\eqref{strong_double}. Let the pair $(\rho,\omega)$ satisfy the
compatibility condition
\begin{equation}
\label{weight_scaling}
 \frac{r}{R}\left( \frac{\rho(B(x,r))}{\rho(B(x,R))} \right)^{1/q} \leq C_{\rho,\omega}\left( 
 \frac{\omega(B(x,r))}{\omega(B(x,R))}\right)^{1/p},
\end{equation}
for all $x \in \Omega$ and $r \leq R$. If $v \in \Wonepo(\omega,\Omega)$, then
$v\in L^q(\rho,\Omega)$ and
\begin{equation}
\label{embeddingnI}
 \|v\|_{L^q(\rho,\Omega)}  \lesssim \diam(\Omega) 
 \rho(\Omega)^{1/q} 
\omega(\Omega)^{-1/p} 
\| \nabla v \|_{ L^p(\omega,\Omega) },
\end{equation}
where the hidden constant depends on the quotient between 
the radii of the balls inscribed and circumscribed in $\Omega$.
\end{theorem}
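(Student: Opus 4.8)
\textit{Overall strategy.} The plan is to deduce \eqref{embeddingnI} from a two-weight norm inequality for the truncated Riesz potential of order one, and to read off the explicit constant from the compatibility condition \eqref{weight_scaling}; this is the route of \cite{CW:85,FGW:94,MR2454455}, which I would follow, adding the bookkeeping that exposes the dependence on $\rho(\Omega)$, $\omega(\Omega)$ and $\diam(\Omega)$. \emph{First}, since $C_0^\infty(\Omega)$ is dense in $\Wonepo(\omega,\Omega)$ by definition (see Definition~\ref{H_1_weighted} and Proposition~\ref{PR:banach}), it suffices to prove \eqref{embeddingnI} for $v\in C_0^\infty(\Omega)$, extended by zero to $\R^n$. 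Writing $d=\diam(\Omega)$, fixing $x\in\Omega$, integrating $\nabla v$ along rays from $x$ and using that $v$ vanishes outside $B(x,d)$, one gets the classical pointwise estimate
\begin{equation}
\label{eq:rieszrep}
  |v(x)| \lesssim \int_{B(x,d)}\frac{|\nabla v(y)|}{|x-y|^{n-1}}\diff y =: Tf(x), \qquad f:=|\nabla v|,
\end{equation}
with a dimensional constant. Thus everything reduces to the bound $\|Tf\|_{L^q(\rho,\Omega)}\lesssim d\,\rho(\Omega)^{1/q}\omega(\Omega)^{-1/p}\|f\|_{L^p(\omega,\Omega)}$.

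\textit{Reducing $\Omega$ to a ball and localizing \eqref{weight_scaling}.} Because the constant may depend on the ratio of the radii of the balls inscribed in and circumscribed about $\Omega$, there are $x_0$ and $R\approx d$ with $\Omega\subset B(x_0,R)$ and with $\Omega$ containing a ball of radius $\approx R$. Then $\Omega\subset B(x,2R)$ for every $x\in\Omega$, and iterating the doubling property of $\rho$ (hypothesis \eqref{strong_double}) and the strong doubling property of $\omega$ (Proposition~\ref{pro:double}, since $\omega\in A_p(\R^n)$) gives $\rho(B(x,2R))\approx\rho(\Omega)$ and $\omega(B(x,2R))\approx\omega(\Omega)$, uniformly in $x\in\Omega$. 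Feeding the outer radius $2R$ into \eqref{weight_scaling} yields, for every $B=B(x,r)$ with $x\in\Omega$ and $r\le 2R$, the localized balance inequality
\begin{equation}
\label{eq:loc_balance}
  r\,\rho(B)^{1/q} \lesssim d\,\rho(\Omega)^{1/q}\,\omega(\Omega)^{-1/p}\,\omega(B)^{1/p},
\end{equation}
which already carries the three scale factors on the right of \eqref{embeddingnI}.

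\textit{The two-weight estimate for $T$, and the main obstacle.} This is the analytic core, and the step I expect to be hardest: obtaining the two-weight bound for $T$ with the \emph{sharp} constant $\diam(\Omega)\,\rho(\Omega)^{1/q}\omega(\Omega)^{-1/p}$, which is exactly where \eqref{weight_scaling} --- and not merely $\omega\in A_p(\R^n)$ --- is needed. I would expand $Tf(x)$ over the dyadic annuli $A_j=B(x,2^{-j}\cdot 2R)\setminus B(x,2^{-j-1}\cdot 2R)$; on each annulus H\"older's inequality and the $A_p$-condition \eqref{A_pclass} give $\int_{A_j}|x-y|^{1-n}f(y)\diff y\lesssim r_j\,\omega(B(x,r_j))^{-1/p}\|f\|_{L^p(\omega,B(x,r_j))}$. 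Summing the annuli contained in $B(x,\delta)$ bounds that part of $Tf(x)$ by $\delta\,(M_\omega(f^p)(x))^{1/p}$, where $M_\omega$ is the maximal operator for the measure $\omega\diff y$; summing the remaining annuli and using the strong doubling of $\omega$ together with $\omega(B(x,2R))\approx\omega(\Omega)$ bounds that part by $\lesssim d^n\delta^{1-n}\omega(\Omega)^{-1/p}\|f\|_{L^p(\omega,\Omega)}$. A Hedberg-type optimization in $\delta\in(0,2R]$ then gives a pointwise bound for $Tf(x)$; converting the scale factors through \eqref{eq:loc_balance}, integrating against $\rho$ over $\Omega$, and invoking the boundedness of $M_\omega$ on $L^{q/p}(\omega\diff y)$ (legitimate since $q\ge p$ and $\omega\diff y$ is doubling) yields the required two-weight estimate, hence \eqref{embeddingnI}. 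A cleaner write-up would instead cite the two-weight boundedness of fractional integrals under a balance condition of the type \eqref{weight_scaling} --- proved in \cite{CW:85,FGW:94,MR2454455} --- since \eqref{weight_scaling} is precisely of Sawyer--Wheeden type, and the scale factors in \eqref{eq:loc_balance} are what produce the explicit constant. Throughout I assume $n\ge2$; the case $n=1$ is the elementary one already handled by the one-dimensional computation preceding the statement.
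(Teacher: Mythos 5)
Your outline matches the standard route, and your suggested ``cleaner write-up'' is in fact what the paper does: it extends $v$ by zero to a circumscribed ball $B_R$ with $R\le 2\diam(\Omega)$, quotes the two-weight Sobolev inequality on $B_R$ under the balance condition \eqref{weight_scaling} --- \cite[Theorem 1.5]{CW:85} for $p<q$ and \cite[Corollary 2.1]{MR2454455} for $p=q$ --- and then converts $\rho(B_R)$ into $\rho(\Omega)$ by the strong doubling property \eqref{strong_double} (through the inscribed ball, which is where the dependence on the ratio of radii enters) and $\omega(B_R)^{-1/p}$ into $\omega(\Omega)^{-1/p}$ by monotonicity. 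Your reduction to $C_0^\infty(\Omega)$, the Riesz-potential representation, and your localized balance inequality reproduce exactly this bookkeeping, so up to that point you are aligned with the paper.

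The gap is in the step you yourself flag as the core: your self-contained proof of the two-weight bound for the truncated Riesz potential is not complete as sketched. Two concrete problems. First, the endpoint $q=p$: $M_\omega$ is \emph{not} bounded on $L^{q/p}(\omega\,dy)=L^1(\omega\,dy)$ (only weak type $(1,1)$), so the case $p=q$ --- which the theorem includes and which is used later in the paper, e.g.\ in Lemma~\ref{lem:h1l2weight} --- is not covered by your argument. Second, even for $p<q$, the final assembly is not a one-line consequence of the Hedberg bound: your pointwise estimate controls $Tf(x)$ by $\delta\,(M_\omega(f^p)(x))^{1/p}$ plus a global term, but the norm you must produce is taken with respect to $\rho$, not $\omega$, so boundedness of $M_\omega$ on $L^{q/p}(\omega\,dy)$ does not directly yield an $L^q(\rho)$ bound; the transfer from $\omega$-averages to $\rho$-integration is precisely where \eqref{weight_scaling} must be used scale by scale (e.g.\ inserted into each annulus term before summing), and carrying this out with the stated constant is the substance of the Chanillo--Wheeden/Sawyer--Wheeden-type theorems, whose proofs are considerably more involved than the sketch. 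So either cite those results (as the paper does, and as your alternative suggests), or expect the ``hardest step'' to require a genuine argument --- in particular a different treatment of the case $q=p$ --- rather than the maximal-function shortcut as written.
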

\begin{proof}
Given $v \in \Wonepo(\omega, \Omega)$ we denote by $\tilde{v}$ its extension
by zero to a ball $B_R$ of radius $R$ containing $\Omega$
such that $R \leq 2 \diam(\Omega)$.
We then apply \cite[Theorem 1.5]{CW:85} if $p<q$, or \cite[Corollary 2.1]{MR2454455} if $p=q$, to conclude
\begin{align*}
 \| \tilde{v}\|_{L^q(\rho,B_R)} 
  & \lesssim R \rho(B_R)^{1/q} \omega(B_R)^{-1/p}
    \| \nabla \tilde{v} \|_{ L^p(\omega,B_R) }.
\end{align*}
By assumption $\rho$ satisfies the strong doubling property \eqref{strong_double} and so, for $B_r \subset \Omega \subset \bar{\Omega} \subset B_R$, we have
$\rho (B_R) \lesssim \rho(B_r) \leq \rho (\Omega)$ with a constant
that only depends on $R/r$. 
Applying this property, together with
$\omega(\Omega) \le \omega(B_R)$, we derive \eqref{embeddingnI}.
\end{proof}

\begin{theorem}[Poincar\'e inequality]
\label{thm:embeddingdiffmetrics}
Let $p \in (1,q]$, $\rho$ be a weight that satisfies the strong doubling property \eqref{strong_double},
and $\omega \in A_p(\R^n)$, and let the pair $(\rho, \omega)$   satisfy \eqref{weight_scaling}. If
$v \in W^1_p(\omega, \Omega)$, then
there is a constant $v_\Omega$ such that
\begin{equation}
\label{embeddingn}
 \|v - v_\Omega \|_{L^q(\rho,\Omega)}  \lesssim \diam(\Omega) 
 \rho(\Omega)^{1/q} 
\omega(\Omega)^{-1/p} 
\| \nabla v \|_{ L^p(\omega,\Omega) },
\end{equation}
where the hidden constant depends on the quotient between 
the radii of the balls inscribed and circumscribed in $\Omega$.
\end{theorem}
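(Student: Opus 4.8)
The plan is to reproduce the proof of Theorem~\ref{thm:embeddingdiffmetricsI} almost verbatim. The only new ingredient is that extension by zero --- which was legitimate there because $v$ had vanishing trace --- must be replaced by a bounded Sobolev extension, and correspondingly the \emph{Sobolev} inequality on the enclosing ball must be replaced by the \emph{Poincar\'e} inequality on that ball; both are available from \cite{CW:85} (for $p<q$) and \cite{MR2454455} (for $p=q$) under the balance condition \eqref{weight_scaling} on $(\rho,\omega)$ and the strong doubling property of $\rho$.

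Concretely, first I would fix the inscribed ball $B_r\subset\Omega$ and a ball $B_R\supset\Omega$ with $R\le 2\diam(\Omega)$; by hypothesis $R/r$ is controlled by the quotient between the radii of the balls inscribed in and circumscribed about $\Omega$, and every constant below is allowed to depend on it. Since $\Omega$ is Lipschitz and $\omega\in A_p(\R^n)$, there is a bounded extension operator $E\colon W^1_p(\omega,\Omega)\to W^1_p(\omega,B_R)$ that reproduces constants, with a gradient estimate of the form $\|\nabla(Ev)\|_{L^p(\omega,B_R)}\lesssim\|\nabla v\|_{L^p(\omega,\Omega)}+\diam(\Omega)^{-1}\|v-v_{B_r}\|_{L^p(\omega,\Omega)}$. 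The single-weight Poincar\'e inequality of Corollary~\ref{C:Poincareweighted-2} (applied to $v$ over $\Omega$, after rescaling to diameter one) absorbs the last term, leaving
\begin{equation*}
 \|\nabla(Ev)\|_{L^p(\omega,B_R)}\lesssim\|\nabla v\|_{L^p(\omega,\Omega)}.
\end{equation*}

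Next I would apply the two-weight Poincar\'e inequality on $B_R$ to $Ev$,
\begin{equation*}
 \|Ev-(Ev)_{B_R}\|_{L^q(\rho,B_R)}\lesssim R\,\rho(B_R)^{1/q}\,\omega(B_R)^{-1/p}\,\|\nabla(Ev)\|_{L^p(\omega,B_R)},
\end{equation*}
set $v_\Omega:=(Ev)_{B_R}$, and restrict the left-hand side to $\Omega$, where $Ev=v$, to obtain $\|v-v_\Omega\|_{L^q(\rho,\Omega)}\le\|Ev-(Ev)_{B_R}\|_{L^q(\rho,B_R)}$. Finally, exactly as in Theorem~\ref{thm:embeddingdiffmetricsI}, the strong doubling property \eqref{strong_double} of $\rho$ gives $\rho(B_R)\lesssim\rho(B_r)\le\rho(\Omega)$, while $\omega(\Omega)\le\omega(B_R)$ and $R\le 2\diam(\Omega)$; combining these with the two displays above yields \eqref{embeddingn}.

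The step I expect to be the main obstacle is the extension: obtaining a \emph{gradient-only} bound, with constants depending only on $C_{p,\omega}$ and the geometry of $\Omega$, rather than the crude bound involving the full norm $\|v\|_{W^1_p(\omega,\Omega)}$ --- this is precisely where the normalization $v\mapsto v-v_{B_r}$ and Corollary~\ref{C:Poincareweighted-2} are used. One can bypass the extension entirely by combining the pointwise representation $|v(x)-v_{B_r}|\lesssim\int_\Omega|x-y|^{1-n}|\nabla v(y)|\diff y$, valid on star-shaped domains and extended to finite unions of such as in Corollary~\ref{C:Poincareweighted-2}, with the two-weight estimate for the fractional integral of order one of \cite{CW:85}; this is in fact the machinery underlying the cited results and delivers \eqref{embeddingn} directly, with the same dependence on the inscribed/circumscribed ball ratio.
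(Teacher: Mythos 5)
Your proposal is essentially the paper's own proof: it likewise fixes $\bar B_r\subset\Omega\subset\bar\Omega\subset B_R$, extends $v$ to $B_R$, applies the two-weight Poincar\'e inequality on $B_R$ from \cite{FGW:94,CW:85} (for $p<q$) or \cite{MR2454455} (for $p=q$) with $v_\Omega$ a (weighted) mean of the extension, restricts to $\Omega$, and concludes with the strong doubling of $\rho$ together with $\omega(\Omega)\le\omega(B_R)$ and $R\lesssim\diam(\Omega)$. The only difference is the step you single out as the main obstacle: the paper simply invokes Chua's weighted extension theorem \cite[Theorem 1.1]{Chau:92}, which already provides the gradient-only bound $\|\nabla\tilde v\|_{L^p(\omega,B_R)}\lesssim\|\nabla v\|_{L^p(\omega,\Omega)}$, so your normalization via Corollary~\ref{C:Poincareweighted-2} (or the Riesz-potential bypass) is a valid but unnecessary detour.
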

\begin{proof}
Since $\Omega$ is open and bounded, we can choose $0<r<R$ such that
$\bar{B}_r \subset \Omega \subset \bar{\Omega} \subset B_R$,
where $B_\delta$ is a ball of radius $\delta$.
The extension theorem on weighted Sobolev spaces proved in
\cite[Theorem 1.1]{Chau:92} shows that there exists
$ \tilde{v} \in W^1_p(\omega, B_R)$ such that $\tilde{v}_{|\Omega} = v$ and
\begin{equation}
\label{eq:cont-extension}
  \| \nabla \tilde{v} \|_{L^p(\omega,B_R)} \lesssim \| \nabla v \|_{ L^p(\omega,\Omega) },
\end{equation}
where the hidden constant does not depend on $v$. If $p<q$, 
then we invoke \cite[Theorem 1]{FGW:94} and 
\cite[Theorem 1.3]{CW:85} to show that inequality \eqref{embeddingn}
holds over $B_R$ with $v_\Omega$ being a weighted mean of
$\tilde v$ in $B_R$. If $p=q$ instead, we appeal to 
\cite[Remark 2.3]{MR2454455} and arrive at the same conclusion. Consequently, we have
\begin{align*}
  \| \tilde{v} - v_\Omega \|_{L^q(\rho,\Omega)} & \leq  
    \|\tilde{v} - v_\Omega \|_{L^q(\rho,B_R)}
    \lesssim R \rho(B_R)^{1/q} \omega(B_R)^{-1/p} \| \nabla \tilde{v} \|_{ L^p(\omega,B_R) }.
\end{align*}
The strong doubling property
$\rho(B_R)\lesssim\rho(\Omega)$ and $\omega(\Omega)\le\omega(B_R)$
yield
\begin{equation*}
  \| \tilde{v} - v_\Omega \|_{L^q(\rho,\Omega)} \lesssim
    \diam(\Omega) \rho(\Omega)^{1/q} \omega(\Omega)^{-1/p}
    \| \nabla \tilde{v} \|_{L^p(\omega,B_R) }.
\end{equation*}
Employing \eqref{eq:cont-extension} we finally conclude \eqref{embeddingn}.
\end{proof}

Inequalities \eqref{embeddingnI} and \eqref{embeddingn} are
generalizations of several classical results. We first consider
$\omega = \rho \equiv 1$, for which an easy manipulation shows that \eqref{weight_scaling} holds if 
$\Sob(W^1_p) \geq \Sob(L^q)$, whence \eqref{embeddingn} reduces to 
\eqref{unweighted_embedding}. We next consider $\rho = \omega \in A_p(\R^n)$, for which \eqref{weight_scaling} becomes
\[
 \omega(B(x,R)) \lesssim \left( \frac{R}{r}\right)^{pq/(q-p)} \omega(B(x,r)).
\]
This is a consequence of the strong doubling property 
\eqref{strong_double} for $\omega$
in conjunction with $|B_R| \approx R^n$, provided 
the restriction $q \leq pn/(n-1)$ between $q$ and $p$ is valid.
Moreover, owing to the so-called \emph{open ended property}
  of the Muckenhoupt classes \cite{Muckenhoupt}: if $\omega \in
  A_p(\R^n)$, then
$\omega \in A_{p-\epsilon}(\R^n)$ for some $\epsilon > 0$, we conclude that $q \leq pn/(n-1) + \delta$ for some $\delta >0$,
thus recovering the embedding results proved by Fabes, Kenig and Serapioni 
\cite[Theorem 1.3]{FKS:82} and \cite[Theorem 1.5]{FKS:82}; 
see \cite{CW:85} for details.

The embedding result of Theorem~\ref{thm:embeddingdiffmetrics} allows us to obtain
polynomial interpolation error estimates in $L^q(\rho,T)$ for functions in $W^1_p(\omega,S_T)$.

\begin{theorem}[interpolation estimates for different metrics I]
\label{TH:v - PivLq}
Let $\T$ be a simplicial mesh and $\mathcal{P} = \mathbb{P}_1$ in  \eqref{eq:defFESpace}.
Let the pair $(\rho,\omega) \in A_{q}(\R^n)\times A_p(\R^n)$
 satisfy \eqref{weight_scaling}.
If $v \in W^1_p(\omega,S_{T})$ for any $T \in \T$, then
\begin{equation}
\label{eq:v - PivLq}
  \| v - \Pi_{\T} v \|_{L^q(\rho,T)} \lesssim h_{T} \rho(S_T)^{1/q} \omega(S_T)^{-1/p}
      \| \nabla v\|_{ L^p(\omega,S_{T})},
\end{equation}
where the hidden constant depends only on $\sigma$, $\psi$, $C_{p,\omega}$ and $C_{\rho,\omega}$.
\end{theorem}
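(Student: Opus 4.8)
The plan is to combine the interior interpolation estimate from Theorem~\ref{thm:interpolintsimplex} (in the case $m=1$, $k=0$) with the weighted Poincar\'e inequality for different metrics from Theorem~\ref{thm:embeddingdiffmetrics}. First I would reduce to the case of an interior element, i.e.\ $\partial T \cap \partial\Omega = \emptyset$; the boundary case follows by the usual modifications already invoked for the Dirichlet versions of the simplicial estimates. Pick a node $z \in \Nin(T)$ and use the invariance property \eqref{invariance}, namely $\Pi_{\T} Q^1_z v = Q^1_z v$ on $T$, to write
\[
  v - \Pi_{\T} v = (v - Q^1_z v) - \Pi_{\T}(v - Q^1_z v) \quad \text{on } T.
\]

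Next I would estimate the two pieces separately in the $L^q(\rho,T)$ norm. For the first piece $v - Q^1_z v$, the key observation is that $Q^1_z v$ is the averaged Taylor polynomial built from $\psi_z$ supported in $S_z$, so that $Q^1_z v$ is (up to the linear correction) a weighted average of $v$ over $S_z$; applying Theorem~\ref{thm:embeddingdiffmetrics} over the star-shaped patch $S_T$ (or over $S_z$, then enlarging), with $v_\Omega$ chosen to be exactly the weighted mean, yields
\[
  \| v - Q^1_z v \|_{L^q(\rho,S_T)} \lesssim h_T\, \rho(S_T)^{1/q} \omega(S_T)^{-1/p} \| \nabla v \|_{L^p(\omega,S_T)},
\]
after absorbing $\diam(S_T) \approx h_T$ and noting that $S_T$ is a finite union of star-shaped subdomains so the inequality transfers via an argument like Corollary~\ref{C:Poincareweighted-2}. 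One must check that the constant $v_\Omega$ in Theorem~\ref{thm:embeddingdiffmetrics} can be taken compatible with the cancellation structure of $Q^1_z$; since $R^1_z$ kills $\mathbb{P}_1$, the difference $v - Q^1_z v$ already has the right vanishing behavior and the Poincar\'e-type estimate applies directly to $\nabla v$ rather than to $v$ itself.

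For the second piece, $\Pi_{\T}(v - Q^1_z v)$ lies in $\V(\T)$ and is a polynomial of degree $\le 1$ on $T$; I would bound its $L^q(\rho,T)$ norm by an inverse-type / stability estimate. Using \eqref{def:PI}, $\Pi_{\T}(v - Q^1_z v)$ is a linear combination of the shape functions $\lambda_{y}$, $y \in \Nin(T)$, with coefficients $Q^1_y(v - Q^1_z v)(y)$; each coefficient is controlled by $\|Q^1_y(v - Q^1_z v)\|_{L^\infty(S_y)}$, which by the stability Lemma~\ref{LM:Qm_stab} (with $k=0$) is bounded by $h_y^{-n}\|1\|_{L^{p'}(\omega^{-p'/p},S_y)} \|v - Q^1_z v\|_{L^p(\omega,S_y)}$. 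Combining this with $\|\lambda_y\|_{L^q(\rho,T)} \lesssim \rho(T)^{1/q}$ and the $A_p$-compatibility relating $h_y^{-n}\|1\|_{L^{p'}(\omega^{-p'/p},S_y)}$ to $\omega(S_y)^{-1/p}$, one arrives at a bound of the same form with $\|v - Q^1_z v\|_{L^p(\omega,S_T)}$ on the right; a final application of the unweighted-to-weighted Poincar\'e inequality \eqref{v-Q_vLp} (or \eqref{v-Q_1Lp}) converts this into $h_T \|\nabla v\|_{L^p(\omega,S_T)}$, and the $\rho,\omega$ volume factors reassemble correctly using the doubling property \eqref{strong_double} for $\rho$. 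Adding the two pieces gives \eqref{eq:v - PivLq}.

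The main obstacle I expect is the bookkeeping of the weight factors $\rho(S_T)^{1/q}\omega(S_T)^{-1/p}$: one must ensure that every local quantity ($\rho(T)$ vs.\ $\rho(S_y)$ vs.\ $\rho(S_T)$, and likewise for $\omega$) can be swapped for the patch-level quantity using only the strong doubling property of $\rho$, the $A_p$-membership of $\omega$, and shape regularity, with constants depending only on $\sigma$, $C_{p,\omega}$, $C_{\rho,\omega}$. The compatibility condition \eqref{weight_scaling} enters precisely to control $\rho(S_T)^{1/q}\omega(S_T)^{-1/p}$ against the same quantity on smaller balls, so the delicate point is verifying that the chain of inequalities never picks up a factor depending on the (possibly very anisotropic, but here simplicial and shape-regular) geometry beyond the allowed constants. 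A secondary technical point is that Theorem~\ref{thm:embeddingdiffmetrics} is stated for domains comparable to balls, so applying it on $S_T$ requires the star-shaped decomposition argument, exactly as in the passage from Lemma~\ref{le:Poincareweighted} to Corollary~\ref{C:Poincareweighted-2}; I would either invoke that corollary in the different-metrics setting or note that $S_T$ for a shape-regular simplicial mesh is a finite union of uniformly star-shaped pieces with controlled overlap.
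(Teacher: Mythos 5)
The central step of your plan has a genuine gap. After the (correct) reduction $v-\Pi_\T v=(v-Q^1_z v)-\Pi_\T(v-Q^1_z v)$, you need the mixed-metric bound $\|v-Q^1_z v\|_{L^q(\rho,S_T)}\lesssim h_T\,\rho(S_T)^{1/q}\omega(S_T)^{-1/p}\|\nabla v\|_{L^p(\omega,S_T)}$, but Theorem~\ref{thm:embeddingdiffmetrics} does not give this: it provides the estimate with one specific \emph{constant} $v_\Omega$ (a weighted mean of an extension of $v$) subtracted, and $Q^1_z v$ is a genuine first-degree polynomial, not that constant. Your justification — that the cancellation $R^1_z(\mathbb{P}_1)=0$ gives $v-Q^1_z v$ ``the right vanishing behavior'' — is not relevant here: $R^1_z$ does not occur in your decomposition, and reproducing $\mathbb{P}_1$ does not by itself yield a Poincar\'e inequality between different weights and exponents. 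To repair this you would have to insert $v_T$ and control $Q^1_z(v-v_T)=Q^1_z v-v_T$ in $L^q(\rho,S_T)$ via an $L^\infty$ bound: H\"older in the $(\omega,p)$ metric, the $A_p$ condition in the form $h_z^{-n}\|1\|_{L^{p'}(\omega^{-p'/p},S_z)}\lesssim\omega(S_z)^{-1/p}$, multiplication by $\rho(S_T)^{1/q}$, and doubling — none of which appears in your write-up. A second, smaller gap occurs in your treatment of $\Pi_\T(v-Q^1_z v)$: you propose to convert $\|v-Q^1_z v\|_{L^p(\omega,S_y)}$ into $h_T\|\nabla v\|_{L^p(\omega,S_T)}$ by \eqref{v-Q_vLp} ``or \eqref{v-Q_1Lp}'', but \eqref{v-Q_1Lp} requires $v\in W^2_p(\omega,\cdot)$, which you do not have, and \eqref{v-Q_vLp} concerns $Q^0_z$, not $Q^1_z$; the first-order term of $Q^1_z v$ must again be handled separately by the same H\"older/$A_p$ argument.

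For comparison, the paper's proof avoids all of this by subtracting a \emph{constant} rather than the linear averaged Taylor polynomial: with $v_T$ the constant for which \eqref{embeddingn} holds on $S_T$, one has $\Pi_\T v_T=v_T$ on an interior element, so $v-\Pi_\T v=(I-\Pi_\T)(v-v_T)$, and the stability bound \eqref{stab_PI} with $k=0$, applied with the pair $(\rho,q)$ (legitimate since $\rho\in A_q(\R^n)$), gives $\|v-\Pi_\T v\|_{L^q(\rho,T)}\lesssim\|v-v_T\|_{L^q(\rho,S_T)}$; a single application of \eqref{embeddingn} together with $\diam S_T\approx h_T$ then yields \eqref{eq:v - PivLq}, with boundary elements handled as you indicate. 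The lesson is that for an $L^q$ estimate only constant-reproduction of $\Pi_\T$ is needed, so invoking the $\mathbb{P}_1$-invariance \eqref{invariance} and $Q^1_z$ is what manufactures the difficulties; if you do wish to keep your architecture, at least use the $(\rho,q)$-stability of $\Pi_\T$ on the second piece as well, so that everything reduces to the one mixed-metric bound on $v-Q^1_z v$, which must then be proved along the lines sketched above.
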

\begin{proof}
Given an interior element $T \in \T$, let us denote $v_T$ the constant such that
the estimate \eqref{embeddingn} holds true on $S_T$. Since $v_T$ is
constant over ${S_T}$, we have that 
$ \Pi_{\T} v_T = v_T$ in $T$.
This, together with the stability bound \eqref{stab_PI} for the
operator $\Pi_{\T}$, implies
\[
  \| v -  \Pi_{\T} v \|_{L^q(\rho,T)} =
  \| (I- \Pi_{\T})(v - v_T) \|_{L^q(\rho,T)}
  \lesssim \| v - v_T \|_{L^q(\rho,S_T)}.
\]
The Poincar\'e inequality \eqref{embeddingn}
and the mesh regularity assumption \eqref{shape_reg_weak} yield
\[
   \| v -  \Pi_{\T} v \|_{L^q(\rho,T)} \lesssim \| v - v_T \|_{L^q(\rho,S_T)}
   \lesssim h_{T}  \rho(S_T)^{1/q} \omega(S_T)^{-1/p} \| \nabla v \|_{ L^p(\omega,S_T) }
\]
which is \eqref{eq:v - PivLq}.
A similar argument yields \eqref{eq:v - PivLq} on boundary elements.
\end{proof}

A trivial but important consequence of Theorem~\ref{TH:v - PivLq} is the standard, unweighted,
interpolation error estimate in Sobolev spaces; see \cite[Theorem 3.1.5]{Ciarletbook}.

\begin{corollary}[$L^q$-based interpolation estimate]
\label{CR:v - PivLq}
If $p<n$ and $\Sob(W^1_p)>\Sob(L^q)$, then
for all $T \in \T$ 
and $v \in W^1_p(S_{T})$, we have the local error estimate
\begin{equation}
\label{v - PivLq}
  \| v - \Pi_{\T} v \|_{L^q(T)} \lesssim
  h_{T}^{\Sob(W^1_p) - \Sob(L^q) }
  \| \nabla v\|_{ L^p(S_{T}) },
\end{equation}
where the hidden constant depends only on $\sigma$ and $\psi$.
\end{corollary}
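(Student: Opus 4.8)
The plan is to obtain this as a direct corollary of Theorem~\ref{TH:v - PivLq} by specializing to the trivial weights $\omega = \rho \equiv 1$ and then reading off the explicit power of $h_T$. First I would record that the constant weight $1$ belongs to $A_r(\R^n)$ for every $r \in (1,\infty)$, with $A_r$-constant equal to $1$; hence $(\rho,\omega) = (1,1) \in A_q(\R^n) \times A_p(\R^n)$, and any constant produced by Theorem~\ref{TH:v - PivLq} will ultimately depend only on $\sigma$ and $\psi$. Since $S_T$ is bounded, $W^1_p(S_T) \subset L^1(S_T)$, so $\Pi_\T v$ is well defined.

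Next I would check that the compatibility condition \eqref{weight_scaling} holds for $(\rho,\omega) = (1,1)$ \emph{precisely} because $\Sob(W^1_p) \ge \Sob(L^q)$. With constant weights one has $\rho(B(x,r))/\rho(B(x,R)) = \omega(B(x,r))/\omega(B(x,R)) = (r/R)^n$, so the left-hand side of \eqref{weight_scaling} equals $(r/R)^{1+n/q}$ while the right-hand side is a constant times $(r/R)^{n/p}$. Since $r \le R$, the inequality $(r/R)^{1+n/q} \le (r/R)^{n/p}$ amounts to $1 + n/q \ge n/p$, i.e. to $1 - n/p \ge -n/q$, which is exactly $\Sob(W^1_p) \ge \Sob(L^q)$ and is guaranteed by the hypothesis $\Sob(W^1_p) > \Sob(L^q)$; here the compatibility constant $C_{\rho,\omega} = 1$ suffices.

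With these two observations in hand, Theorem~\ref{TH:v - PivLq} gives, for $v \in W^1_p(S_T)$,
\[
  \| v - \Pi_\T v \|_{L^q(T)} \lesssim h_T \, |S_T|^{1/q}\, |S_T|^{-1/p}\, \|\nabla v\|_{L^p(S_T)}
  = h_T\, |S_T|^{1/q - 1/p}\, \|\nabla v\|_{L^p(S_T)},
\]
with hidden constant depending only on $\sigma$ and $\psi$. To finish I would invoke shape regularity of $\T$: since $T \subset S_T$ we have $|S_T| \gtrsim |T| \gtrsim \rho_T^n \gtrsim h_T^n$, while $S_T$ is the union of a uniformly bounded number of simplices each of diameter $\lesssim h_T$, so $|S_T| \lesssim h_T^n$; thus $|S_T| \approx h_T^n$. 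Because $1/q - 1/p \le 0$, substituting this equivalence turns the prefactor into $h_T^{\,1 + n/q - n/p} = h_T^{\Sob(W^1_p) - \Sob(L^q)}$, which is \eqref{v - PivLq}. The whole argument is routine; the one spot that needs a moment's care is the verification of \eqref{weight_scaling}, where one must exploit $r/R \le 1$ so that the larger exponent yields the smaller power.
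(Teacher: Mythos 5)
Your proposal is correct and is essentially the argument the paper intends: the corollary is presented as a direct consequence of Theorem~\ref{TH:v - PivLq} with $\omega=\rho\equiv 1$, and the paper itself notes (just before Theorem~\ref{thm:embeddingdiffmetricsI}) that for constant weights condition \eqref{weight_scaling} reduces precisely to $\Sob(W^1_p)\ge\Sob(L^q)$. Your verification of that condition and the final conversion $|S_T|\approx h_T^n$ via shape regularity match the paper's (unwritten but implied) proof.
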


For simplicial meshes, the invariance property of $\Pi_\T$ 
and similar arguments to those used in \S~\ref{subsec:intP}
enable us to obtain other interpolation estimates. We illustrate this in the following.

\begin{theorem}[interpolation estimates for different metrics II]
\label{thm:W1qrhow2pomega}
Let $\T$ be a simplicial mesh and $\mathcal P = \mathbb{P}_1$ in
\eqref{eq:defFESpace}. Given $p\in(1,q]$, let the pair 
$(\omega,\rho) \in A_p(\R^n) \times A_q(\R^n)$ satisfy \eqref{weight_scaling}. Then, for every $T \in \T$ 
and every $v \in W^2_p(\omega,S_T)$ we have
\begin{equation}
  \| \nabla (v - \Pi_\T v ) \|_{L^q(\rho,T) } \lesssim h_T \rho(S_T)^{1/q} \omega(S_T)^{-1/p} | v |_{W^2_p(\omega,S_T)},
\label{eq:diffweightsH1H2}
\end{equation}
where the hidden constant depends only on $\sigma$, $\psi$, $C_{p,\omega}$ and $C_{\rho,\omega}$. 
\end{theorem}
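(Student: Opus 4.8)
The plan is to mimic the proof of Theorem~\ref{TH:v - PivLq}, but now exploiting the invariance of $\Pi_\T$ over $\mathbb{P}_1$ available on simplicial meshes (property \eqref{invariance}), so that the gradient error can be reduced to an approximation of $\nabla v$ rather than of $v$ itself. First I would fix an interior element $T\in\T$ and a node $z\in\Nin(T)$, and write $v-\Pi_\T v = (v-Q^1_z v) + (Q^1_z v - \Pi_\T v)$. Because $Q^1_z v\in\mathbb{P}_1(S_z)$ and $\Pi_\T$ reproduces $\mathbb{P}_1$ on the interior patch, the second term can be rewritten as $\Pi_\T(Q^1_z v - v)$; combining this with the local stability estimate \eqref{stab_PI} of $\Pi_\T$ (used here with $k=1$) gives
\[
  \|\nabla(v-\Pi_\T v)\|_{L^q(\rho,T)} \lesssim \|\nabla(v - Q^1_z v)\|_{L^q(\rho,S_T)} + \sum_{l=0}^1 h_T^{l-1}\|v - Q^1_z v\|_{W^l_p(\omega,S_T)}.
\]
The $W^l_p(\omega)$ terms on the right are already controlled by Corollary~\ref{CO:k_approximation} (with $m=1$), yielding $h_T\,|v|_{W^2_p(\omega,S_T)}$ after the factor $h_T^{l-1}$; but since the target norm is $L^q(\rho)$ and not $L^p(\omega)$, I actually want to route everything through the mixed-weight Poincaré inequality of Theorem~\ref{thm:embeddingdiffmetrics}.

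The key step is to apply Theorem~\ref{thm:embeddingdiffmetrics} not to $v$ but to each first derivative $\partial_{x_i} v$, $i=1,\dots,n$. Indeed $\partial_{x_i} v\in W^1_p(\omega,S_T)$ since $v\in W^2_p(\omega,S_T)$, so there is a constant $c_i$ with
\[
  \|\partial_{x_i}v - c_i\|_{L^q(\rho,S_T)} \lesssim \diam(S_T)\,\rho(S_T)^{1/q}\,\omega(S_T)^{-1/p}\,\|\nabla\partial_{x_i} v\|_{L^p(\omega,S_T)}.
\]
Next I would observe, using property \eqref{prop_der} with $|\alpha|=1$, that $\partial_{x_i} Q^1_z v = Q^0_z \partial_{x_i} v$, the zeroth-order average of $\partial_{x_i} v$; in particular $\partial_{x_i} Q^1_z v$ is the \emph{constant} $\int \partial_{x_i}v(x)\psi_z(x)\,dx$. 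Since $\Pi_\T$ reproduces $\mathbb{P}_1$ on the interior patch, $\nabla\Pi_\T(Q^1_z v) = \nabla Q^1_z v$, so writing $q:=Q^1_z v-\Pi_\T v$ and expanding the nodal representation $q=\sum_{w\in\Ninn(T)} q(w)\phi_w$, each coefficient $q(w) = Q^1_z v(w) - Q^1_w v(w)$ is a difference of first-degree averaged Taylor polynomials that, after the change of variables used throughout \S\ref{subsec:intP}, involves only $\nabla v$ (the constant and linear reproduction kills the zeroth-order data). Estimating these coefficient differences by $h_T\|\nabla v - (\text{its local average})\|$, combined with $\|D\phi_w\|_{L^q(\rho,T)}\|1\|_{L^{q'}(\rho^{-q'/q},S_T)}\lesssim h_T^{n-1}\cdot\rho(T)$-type bounds coming from $\rho\in A_q(\R^n)$, reduces the $q$-term to $\diam(S_T)\,\rho(S_T)^{1/q}\omega(S_T)^{-1/p}\,|v|_{W^2_p(\omega,S_T)}$ via the mixed Poincaré inequality applied componentwise as above. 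Together with $\diam(S_T)\approx h_T$ from shape regularity, this yields \eqref{eq:diffweightsH1H2}; a routine modification handles boundary simplices, using $v\in\Wp(\omega,S_T)$ so that $\Pi_\T v$ vanishes on $\partial\Omega$.

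The main obstacle I anticipate is the bookkeeping in the term $q=Q^1_z v-\Pi_\T v$: one must verify carefully that each nodal coefficient difference $q(w)=Q^1_z v(w)-Q^1_w v(w)$ genuinely depends only on $\nabla v$ (not on $v$), so that applying the mixed-weight Poincaré/embedding estimate to $\nabla v$ — rather than to $v$ — is legitimate and produces the correct power $h_T^1$ together with the correct weight factor $\rho(S_T)^{1/q}\omega(S_T)^{-1/p}$. This is exactly the cancellation mechanism already used in Lemma~\ref{LM:double_approximation} and Theorem~\ref{full_aniso_W1p} (the identity $R^1_z(p)=0$ for $p\in\mathbb{P}_1$), so the structure is available; the delicate point is tracking the weighted normalization constants $\|\phi_w\|_{W^1_q(\rho,T)}$ and $\|1\|_{L^{q'}(\rho^{-q'/q},S_T)}$ through the change of variables and combining the $A_q$-bound for $\rho$ with the $A_p$-bound for $\omega$ so the two different weights land on the two different sides of the final inequality. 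Everything else is a transcription of the simplicial arguments of \S\ref{subsec:intP} with $L^p(\omega)$ replaced by $L^q(\rho)$ on the left and the mixed-weight Poincaré inequality \eqref{embeddingn} supplying the bridge.
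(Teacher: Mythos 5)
Your proposal is essentially correct and its core reduction is the same as the paper's: split $v-\Pi_\T v$ through $Q^1_z v$, use the invariance \eqref{invariance} of $\Pi_\T$ over $\mathbb{P}_1$, observe via \eqref{prop_der} that $\nabla Q^1_z v = Q^0_z \nabla v$ is a constant, and apply the two-weight Poincar\'e inequality \eqref{embeddingn} componentwise to $\nabla v$ (not to $v$), which is precisely where the factor $h_T\,\rho(S_T)^{1/q}\omega(S_T)^{-1/p}$ comes from. Where you diverge is the treatment of $Q^1_z v-\Pi_\T v$: the paper disposes of it in one line by noting that \eqref{weight_scaling} gives the embedding $W^2_p(\omega,S_T)\hookrightarrow W^1_q(\rho,S_T)$, so the stability \eqref{stab_PI} of $\Pi_\T$ may be invoked \emph{in the target metric} $W^1_q(\rho)$ (legitimate because $\rho\in A_q(\R^n)$), reducing this term to the same quantity $\|\nabla v-Q^0_z\nabla v\|_{L^q(\rho,T)}$ and then to $\|\nabla v-(\nabla v)_T\|_{L^q(\rho,S_T)}$ using $Q^0_z c=c$ and the $L^q(\rho)$-continuity of $Q^0_z$; you instead propose a nodal-coefficient cancellation argument (estimating $q(w)=Q^1_z v(w)-Q^1_w v(w)$ as in Lemma~\ref{LM:double_approximation} and Theorem~\ref{full_aniso_W1p}), which can indeed be pushed through with the $A_p$/$A_q$ bookkeeping you describe, but is considerably heavier and unnecessary on simplicial meshes, where the $\mathbb{P}_1$-invariance makes the stability route available. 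Two smaller points: your first displayed inequality is not what \eqref{stab_PI} gives --- stability applied in $W^1_q(\rho)$ produces $W^l_q(\rho,S_T)$ seminorms on the right, not $W^l_p(\omega,S_T)$ norms, so that bound as written mixes metrics (you abandon this route anyway, so it is only a false start); and when you invoke \eqref{embeddingn} for $\partial_{x_i}v$ you should make explicit the step replacing the Poincar\'e constant $c_i$ by the constant $Q^0_z\partial_{x_i}v$, which uses $Q^0_z c_i=c_i$ together with the boundedness of $Q^0_z$ on $L^q(\rho,S_z)$ coming from $\rho\in A_q(\R^n)$; the paper states this explicitly.
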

\begin{proof}
Let, again, $T \in \T$ be an interior element, the proof for boundary elements follows from similar arguments. Denote by 
$\vero$ a vertex of $T$.
Since the pair of weights $(\omega,\rho)$ satisfies \eqref{weight_scaling} the embedding $W^2_p(\omega,S_T) \hookrightarrow W^1_q(\rho,S_T)$ holds and it is
legitimate to write
\begin{align*}
  \| \nabla (v - \Pi_\T v ) \|_{ L^q(\rho,T) } \leq \| \nabla v - \nabla Q^1_\vero v  \|_{L^q(\rho,T)} 
  + \| \nabla (Q^1_\vero v - \Pi_\T v) \|_{ L^q(\rho,T) }
\end{align*}
In view of \eqref{invariance} and \eqref{stab_PI}, we have
\[
  \| \nabla (Q^1_\vero v - \Pi_\T v) \|_{L^q(\rho,T)} \lesssim \|\nabla  v - \nabla Q^1_\vero v  \|_{L^q(\rho,T)}.
\]
We now recall \eqref{prop_der}, namely $\nabla Q^1_\vero v = Q^0_\vero \nabla v$, to
end up with
\begin{equation*}
  \| \nabla (v - \Pi_\T v ) \|_{L^q(\rho,T)} \lesssim \| \nabla v - Q^0_\vero \nabla v  \|_{L^q(\rho,T)} 
  \lesssim \| \nabla v - (\nabla v)_T  \|_{L^q(\rho,T)},
\end{equation*}
because $Q^0_\vero c = c$ for any constant $c$ and $Q_{\vero}^0$ is continuous in $L^q(\rho,T)$. 
Applying \eqref{embeddingn} finally implies \eqref{eq:diffweightsH1H2}.
\end{proof}

\section{Applications}
\label{sec:applications}

We now present some immediate applications of the interpolation error
estimates developed in the previous sections. We recall that
$\V(\T)$ denotes the finite element space over the mesh $\T$,
$\Pi_{\T}$ the quasi-interpolation operator defined in \eqref{def:PI},
and $U_{\T}$ the Galerkin solution to \eqref{weighted_second_discrete}.

\subsection{Nonuniformly elliptic boundary value problems}
\label{sub:degeneratePDE}

We first derive novel error estimates for the finite element approximation
of solutions of a \textit{nonuniformly} elliptic boundary value problem. 
Let $\Omega$ be a polyhedral domain in $\R^n$
with Lipschitz boundary, $\omega \in A_2(\R^n)$ and $f$ be a function in $L^2({\omega^{-1}},\Omega)$.
Consider problem \eqref{weighted_second} with $\A$ as in \eqref{eq:weightissingular}.
The natural space to seek a solution $u$ of problem
\eqref{weighted_second} is the weighted Sobolev space $H_0^1(\omega,\Omega)$.

Since $\Omega$ is bounded and $\omega \in A_2(\R^n)$, Proposition~\ref{PR:banach} shows that
$H^1_0(\omega,\Omega)$ is Hilbert. The Poincar\'e inequality proved in \cite[Theorem 1.3]{FKS:82}
and the Lax-Milgram lemma then imply the existence and uniqueness of a solution to \eqref{weighted_second}
as well as \eqref{weighted_second_discrete}.
The following result establishes a connection between $u$
and $U_{\T}$.

\begin{corollary}[error estimates for nonuniformly elliptic PDE]
\label{col:errest}
Let $\omega \in A_2(\R^n)$ and
$\V(\T)$ consist of simplicial elements of degree $m\ge 1$ or rectangular elements of degree $m=1$.
If the solution $u$ of \eqref{weighted_second} satisfies
$u \in  H_0^1(\omega,\Omega) \cap H^{k+1}(\omega,\Omega)$ for some $1\le k \le m$,
then we have the following global error estimate
\begin{equation}
\label{estimate_co_1}
 \|\nabla( u - U_{\T}) \|_{ L^2(\omega,\Omega) } \lesssim
\| h^k D^{k+1} u \|_{L^2(\omega,\Omega)},
\end{equation}
where $h$ denotes the local mesh-size function of $\T$.
\end{corollary}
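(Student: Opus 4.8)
The plan is to reduce the error bound to an interpolation estimate by a \emph{Céa-type} argument, and then to invoke the weighted interpolation theory of \S\ref{sec:intweighted}--\S\ref{sec:int}. First I would record that, by the lower bound in \eqref{eq:weightissingular}, the bilinear form $a(w,v)=\int_\Omega \A\nabla w\cdot\nabla v$ is coercive on $H^1_0(\omega,\Omega)$ with respect to the weighted gradient seminorm $\|\nabla\cdot\|_{L^2(\omega,\Omega)}$, and by the upper bound it is bounded in the same seminorm. Galerkin orthogonality $a(u-U_\T,W)=0$ for all $W\in\V(\T)$ (from \eqref{weighted_second} and \eqref{weighted_second_discrete}) together with Cauchy--Schwarz then yields $\|\nabla(u-U_\T)\|_{L^2(\omega,\Omega)}\lesssim\|\nabla(u-W)\|_{L^2(\omega,\Omega)}$ for every $W\in\V(\T)$, with a constant depending only on the ellipticity ratio in \eqref{eq:weightissingular}; this is the quasi-optimality behind \eqref{GO}. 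Since $u\in H^1_0(\omega,\Omega)\subset L^1(\Omega)$ by Proposition~\ref{pro:loc_int}, the quasi-interpolant $\Pi_\T u$ is well defined, and it remains to bound $\|\nabla(u-\Pi_\T u)\|_{L^2(\omega,\Omega)}$.

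For the local interpolation error I would treat the two element types separately. On simplicial meshes I would re-run the proof of Theorem~\ref{thm:interpolintsimplex} (and of its boundary counterpart, the interpolation estimate on Dirichlet simplices) with the polynomial degree lowered from $m$ to $k$, which is legitimate because $1\le k\le m$: since $\Pi_\T$ reproduces $\mathbb{P}_m\supseteq\mathbb{P}_k$ on interior stars, one has $u-\Pi_\T u=(u-Q^k_z u)-\Pi_\T(u-Q^k_z u)$ on $T\subset S_z$ for a vertex $z\in\Nin(T)$, and combining the local stability of $\Pi_\T$ (Lemma~\ref{LM:stab_PI}, top index $1$) with Corollary~\ref{CO:k_approximation} applied with polynomial degree $k$ gives $|u-\Pi_\T u|_{W^1_2(\omega,T)}\lesssim\sum_{l=0}^{1}h_T^{\,l-1}|u-Q^k_z u|_{W^l_2(\omega,S_T)}\lesssim h_T^{\,k}|u|_{W^{k+1}_2(\omega,S_T)}$; for boundary simplices the same bound holds because $u\in H^1_0(\omega,\Omega)$ forces $\Pi_\T u|_{\partial\Omega}=0$. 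On rectangular meshes only $m=1$ (hence $k=1$) is allowed, so I would instead invoke the anisotropic estimate of Theorem~\ref{full_aniso_W1p} on interior cells and Theorem~\ref{TH:v - PivH1boundary} on Dirichlet cells: summing over the $n$ coordinate directions and using $h_R^i\le h_R$ turns $\sum_{i=1}^n h_R^i\|\partial_{x_j}\partial_{x_i}u\|_{L^2(\omega,S_R)}$ into $h_R|u|_{W^2_2(\omega,S_R)}$, so that again $|u-\Pi_\T u|_{W^1_2(\omega,R)}\lesssim h_R|u|_{W^2_2(\omega,S_R)}$.

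Finally I would square and sum over the mesh: $\|\nabla(u-\Pi_\T u)\|_{L^2(\omega,\Omega)}^2=\sum_T|u-\Pi_\T u|_{W^1_2(\omega,T)}^2\lesssim\sum_T h_T^{2k}|u|_{W^{k+1}_2(\omega,S_T)}^2$. By the (weak) shape regularity \eqref{shape_reg}, resp.\ \eqref{shape_reg_weak}, one has $h_T\approx h_{T'}$ for every $T'\subset S_T$ up to a constant depending on $\sigma$, so $h_T^{2k}|u|_{W^{k+1}_2(\omega,S_T)}^2\lesssim\sum_{T'\subset S_T}\int_{T'}h^{2k}|D^{k+1}u|^2\omega$, where $h$ is the piecewise-constant mesh-size function; summing over $T$ and using the finite overlapping property of the patches $\{S_T\}$ gives $\sum_T h_T^{2k}|u|_{W^{k+1}_2(\omega,S_T)}^2\lesssim\|h^k D^{k+1}u\|_{L^2(\omega,\Omega)}^2$, and combining with the quasi-optimality yields \eqref{estimate_co_1}. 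The one step that needs genuine care, rather than a direct citation, is the reduced-order ($k\le m$) interpolation bound for the degree-$m$ operator, since the interpolation theorems of \S\ref{sec:intweighted}--\S\ref{sec:int} are phrased at the top regularity $m+1$; the polynomial-invariance/local-stability argument sketched above is exactly what closes this gap, uniformly in the Muckenhoupt weight because $\omega\in A_2(\R^n)$.
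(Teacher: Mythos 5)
Your proposal is correct and follows essentially the same route as the paper: Galerkin quasi-optimality in the weighted energy seminorm, then the choice $W=\Pi_\T u$ together with the local weighted interpolation estimates (Theorem~\ref{thm:interpolintsimplex} and its Dirichlet counterpart on simplices, Theorems~\ref{full_aniso_W1p} and \ref{TH:v - PivH1boundary} on rectangles), summed over elements using the finite overlap of the patches $S_T$. Your explicit treatment of the reduced-regularity case $1\le k<m$, via the $\mathbb{P}_m\supseteq\mathbb{P}_k$ invariance of $\Pi_\T$ combined with Lemma~\ref{LM:stab_PI} and Corollary~\ref{CO:k_approximation} at degree $k$, is a careful spelling-out of a step the paper leaves implicit when it simply cites the degree-$m$ estimates.
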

\begin{proof}
By Galerkin orthogonality we have
\[
 \|\nabla( u - U_{\T}) \|_{L^2(\omega,\Omega)} \lesssim
 \inf_{V \in \V(\T)} \| \nabla(u - V) \|_{L^2(\omega,\Omega)}.
\]
Consider $V = \Pi_{\T}u$ and use the local estimates
of either Theorem~\ref{TH:v-Pikp} or Theorems~\ref{full_aniso_W1p}
and \ref{TH:v - PivH1boundary}, depending on the discretization.
This concludes the proof.
\end{proof}

\begin{remark}[regularity assumption]
We assumed that $u \in
H^{m+1}(\omega,\Omega)$ in Corollary~\ref{col:errest}.
Since the coefficient matrix $\A$ is not smooth
but rather satisfies \eqref{eq:weightissingular}, it is natural
to ponder whether $u \in H^{m+1}(\omega,\Omega)$ holds. 
References \cite{MR2780884,MR2024992} provide sufficient conditions on 
$\A$, $\Omega$ and $f$ for this result to be true for $m=1$.
\end{remark}

\begin{remark}[multilevel methods]
Multilevel methods are known to exhibit linear complexity for the
solution of the ensuing algebraic systems. We refer to \cite{GSS:07} for
weights of class $A_1$ and \cite{CNOS:14} for weights of class $A_2$
(including fractional diffusion).
\end{remark}

\subsection{Elliptic problems with Dirac sources}
\label{sub:delta}

Dirac sources arise in applications
as diverse as modeling of pollutant transport, degradation in an aquatic medium
\cite{ABR-CMAME} and problems in fractured domains \cite{DAngelo:SINUM}.
The analysis of the finite element method applied to such problems is not standard, since
in general the solution does not belong to $H^1(\Omega)$ for $n \geq 1$.
A priori error estimates in the $L^2(\Omega)$-norm
have been derived in the literature
using different techniques.
In a two dimensional setting and assuming that the domain is smooth,
Babu{\v{s}}ka \cite{Babu:73} derived almost
optimal a priori error estimates of order
$\mathcal{O}(h_{}^{1-\epsilon})$, for an arbitrary $\epsilon >0$.
Scott \cite{Scott:73} improved these estimates by removing the
$\epsilon$ and thus obtaining an optimal
error estimate of order $\mathcal{O}(h_{}^{2-n/2})$ for $n=2,3$.
It is important to notice, as pointed out in \cite[Remark 3.1]{Seidman:12}, that these results leave a ``regularity gap''. In other words, the results of \cite{Scott:73} require a $\mathcal{C}^\infty$ domain yet the triangulation is assumed to consist of simplices.
Using a different technique, Casas \cite{Casas:85} obtained the same result for polygonal or polyhedral domains and general regular Borel measures on the right-hand side.
Estimates in other norms are also available in the literature \cite{MR777267,MR0431753}.

In the context of weighted Sobolev spaces, interpolation estimates
and a priori error estimates have been developed in \cite{AGM,DAngelo:SINUM}
for such problems. We now show how to apply our
polynomial interpolation theory to obtain similar results.

Let $\Omega$ be a convex polyhedral domain in $\R^n$ with Lipschitz boundary, and $x_0$ be an interior point of
$\Omega$. Consider the following elliptic boundary value problem:
\begin{equation}
\label{-lap=delta}
  \begin{dcases}
    -\nabla \cdot (\mathcal{A} \nabla u) + \textbf{b}\cdot  \nabla u + cu  = \delta_{x_0},
    & \text{in } \Omega, \\
    u = 0, & \text{on } \partial\Omega,
  \end{dcases}
\end{equation}
where $\mathcal{A} \in L^\infty(\Omega)$ is a piecewise smooth and uniformly symmetric
positive definite matrix, $\textbf{b} \in W^{1,\infty}(\Omega)^n$, $c \in L^{\infty}(\Omega)$,
and $\delta_{x_0}$ denotes the Dirac delta supported at $x_0 \in \Omega$.
Existence and uniqueness of $u$ in weighted Sobolev spaces follows from 
\cite[Theorem 1.3]{AGM} and Lemma \ref{lem:infsup} below,
 and its asymptotic behavior near $x_0$ is dictated by that of the Laplacian
\begin{equation}\label{asympt-x0}
\nabla u(x) \approx |x-x_0|^{1-n}.
\end{equation}
Denote by $d = \diam(\Omega)$ the diameter of $\Omega$ and by
$\dist(x)$ the scaled Euclidean distance
$\dist(x)=|x-x_0|/(2d)$ to $x_0$. Define the weight
\begin{equation}
\label{eq:defofvarpi}
  \varpi(x) = \begin{dcases}
                \frac{ \dist(x)^{n-2}} { \log^2 \dist(x) }, & 0 < \dist(x) < \frac12, \\
                \frac{2^{2-n}}{\log^2 2}, & \dist(x) \geq \frac12.
              \end{dcases}
\end{equation}
We now study two important properties of $\varpi$: $\GRAD u\in L^2(\varpi,\Omega)$
and $\varpi \in A_2(\R^n)$.
\begin{lemma}[regularity of $\nabla u$]\label{L:nabla-u}
The solution $u$ of \eqref{-lap=delta} satisfies
$\GRAD u \in L^2(\varpi,\Omega)$.
\end{lemma}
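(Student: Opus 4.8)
The plan is to bound $\int_\Omega |\GRAD u|^2 \varpi \diff x$ by splitting $\Omega$ into a small ball $B = B(x_0,R)$ around the singular point and its complement, treating the two pieces differently. Note first that, since $x_0 \in \Omega$ and $d = \diam(\Omega)$, one has $0 \le \dist(x) \le 1/2$ for a.e. $x \in \Omega$, so $\varpi$ is given by the first branch of \eqref{eq:defofvarpi} throughout $\Omega$ (up to a null set). I would fix $R \le d$ small enough that the asymptotic relation \eqref{asympt-x0} yields a genuine pointwise bound $|\GRAD u(x)| \lesssim |x-x_0|^{1-n}$ for $0 < |x-x_0| < R$.

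On $\Omega \setminus B$ the quantity $\dist(x)$ ranges in the compact interval $[R/(2d),1/2]$, on which $\varpi$ is bounded above and below by positive constants; hence $\|\GRAD u\|_{L^2(\varpi,\Omega\setminus B)} \lesssim \|\GRAD u\|_{L^2(\Omega\setminus B)}$. Since the right-hand side of \eqref{-lap=delta} vanishes on $\Omega \setminus \{x_0\}$, interior elliptic regularity together with regularity up to the (convex, polyhedral) boundary $\partial\Omega$ away from $x_0$ gives $u \in H^1(\Omega \setminus B)$, so this contribution is finite.

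On $B$ I would insert the pointwise gradient bound and the expression \eqref{eq:defofvarpi} for $\varpi$, and pass to spherical coordinates about $x_0$. The powers of $r = |x-x_0|$ coming from $|\GRAD u|^2 \sim r^{2-2n}$, from $\varpi \sim r^{n-2}$, and from the volume element $r^{n-1}\diff r$ combine to $r^{-1}$, so that
\begin{equation*}
  \int_B |\GRAD u|^2 \varpi \diff x \lesssim \int_0^R \frac{\diff r}{r\,\log^2\!\bigl(r/(2d)\bigr)} = \int_{-\infty}^{\log(R/(2d))} \frac{\diff t}{t^2} < \infty,
\end{equation*}
using the substitution $t = \log(r/(2d))$ and the fact that $\log(R/(2d)) < 0$ for $R \le d$. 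Adding the two estimates yields $\GRAD u \in L^2(\varpi,\Omega)$.

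The elementary integral is not the difficulty; the crux is the two regularity inputs it relies on — upgrading the qualitative asymptotic \eqref{asympt-x0} to a pointwise gradient bound near $x_0$, and the local $H^1$-regularity of $u$ away from the source. Both follow from known results on problems of the form \eqref{-lap=delta} (see \cite{AGM,DAngelo:SINUM} and classical elliptic theory for operators with piecewise smooth coefficients on convex domains), which I would cite rather than reprove.
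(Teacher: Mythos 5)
Your proposal is correct and follows essentially the same route as the paper: the heart of both arguments is inserting the asymptotic $|\nabla u(x)|\lesssim|x-x_0|^{1-n}$ from \eqref{asympt-x0} into $\int|\nabla u|^2\varpi$ and reducing, in polar coordinates, to the convergent integral $\int_0^{1/2} \frac{\diff r}{r\log^2 r}$. The only difference is cosmetic: you split off a small ball around $x_0$ and invoke standard elliptic regularity away from the source, whereas the paper applies the bound over all of $\Omega\subset B(x_0,d)$ in one stroke; your version is a slightly more careful bookkeeping of the same idea, not a different proof.
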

\begin{proof}
Since $\Omega\subset B$, the ball of radius $d$ centered at $x_0$, we
readily have from \eqref{asympt-x0}
\begin{equation*}
\int_\Omega |\nabla u|^2 \varpi \lesssim \int_B \dist(x)^{2(1-n)}
\frac{\dist(x)^{n-2}}{\log^2 \dist(x)} \diff x
\lesssim \int_0^{\frac12} \frac{1}{r\log^2 r} \diff r = \frac{1}{\log 2},
\end{equation*}
which is the asserted result.
\end{proof}
\begin{lemma}[$\varpi \in A_2(\R^n)$]\label{L:varpi-A2}
The weight $\varpi$ belongs to the Muckenhoupt class $A_2(\R^n)$ with
constant $C_{2,\varpi}$ only depending on $d$.
\end{lemma}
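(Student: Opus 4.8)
The plan is to verify the $A_2$-inequality \eqref{A_pclass} for $\varpi$, i.e.\ to bound $\bigl(\fint_B\varpi\bigr)\bigl(\fint_B\varpi^{-1}\bigr)$ uniformly over all balls $B\subset\R^n$. First I would remove the parameters $x_0$ and $d$: by translation and isotropic-dilation invariance of the $A_p$-class (Proposition~\ref{pro:properties}\eqref{v}), the weight $x\mapsto\varpi(2dx+x_0)$ has the same $A_2$-constant as $\varpi$, and it equals the fixed radial weight $x\mapsto w(|x|)$, where $w(r)=r^{n-2}/\log^2 r$ for $0<r<\tfrac12$ and $w(r)=2^{2-n}/\log^2 2$ for $r\ge\tfrac12$. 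Hence it suffices to show $w(|\cdot|)\in A_2(\R^n)$; this moreover shows $C_{2,\varpi}$ can be taken to depend only on $n$, hence only on $d$.

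Next I would record that $w$ is continuous and strictly positive on $(0,\infty)$ (the two branches agree at $r=\tfrac12$, and $\log^2 r>0$ for $r<\tfrac12$), is constant on $[\tfrac12,\infty)$, and has bounded oscillation on every interval $[a,2a]$: there both $r^{n-2}$ and $\log^{-2}r$ vary by at most a fixed factor, the latter because $|\log(2a)|\ge\tfrac12|\log a|$ once $a\le\tfrac14$, while for $a\ge\tfrac14$ bounded oscillation is trivial. With this, I would split a ball $B=B(y,\rho)$ into two regimes. If $|y|\ge 2\rho$, then $|x|\in[|y|/2,\,3|y|/2]$ for all $x\in B$, so $\varpi$ is comparable on $B$ to a single constant and $\bigl(\fint_B\varpi\bigr)\bigl(\fint_B\varpi^{-1}\bigr)\le C_n$. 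If $|y|<2\rho$, then $B\subset B(0,3\rho)$, whence $\bigl(\fint_B\varpi\bigr)\bigl(\fint_B\varpi^{-1}\bigr)\le 3^{2n}\bigl(\fint_{B(0,3\rho)}\varpi\bigr)\bigl(\fint_{B(0,3\rho)}\varpi^{-1}\bigr)$. The problem thus reduces to controlling this quantity for origin-centered balls $B(0,R)$, $R>0$.

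For $R\ge\tfrac12$ this is immediate: splitting $B(0,R)=B(0,\tfrac12)\cup\bigl(B(0,R)\setminus B(0,\tfrac12)\bigr)$, using that $\varpi$ is constant on the annulus while $\int_{B(0,1/2)}\varpi$ and $\int_{B(0,1/2)}\varpi^{-1}$ are finite, one checks that $\fint_{B(0,R)}\varpi$ and $\fint_{B(0,R)}\varpi^{-1}$ stay between two positive constants depending only on $n$. The substantive case --- and the main obstacle --- is $R<\tfrac12$, where polar coordinates give
\begin{equation*}
  \int_{B(0,R)}\varpi = c_n\!\int_0^R\!\frac{r^{2n-3}}{\log^2 r}\,\diff r,
  \qquad
  \int_{B(0,R)}\varpi^{-1} = c_n\!\int_0^R\! r\log^2 r\,\diff r .
\end{equation*}
Using the monotonicity of $r\mapsto|\log r|$ and a direct integration I would obtain, for $n\ge2$ and $R$ small, $\int_0^R r^{2n-3}\log^{-2}r\,\diff r\approx R^{2n-2}/\log^2 R$ and $\int_0^R r\log^2 r\,\diff r\approx R^{2}\log^2 R$; dividing by $|B(0,R)|\approx R^n$ then gives $\fint_{B(0,R)}\varpi\approx R^{n-2}/\log^2 R$ and $\fint_{B(0,R)}\varpi^{-1}\approx R^{2-n}\log^2 R$, whose product is $\approx1$. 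Combining the three cases yields $\varpi\in A_2(\R^n)$. The delicate point throughout is the interaction of the power $r^{n-2}$ with the slowly varying factor $\log^{-2}r$: it is precisely the logarithm that makes $\int_0^{1/2}w(r)\,r^{n-1}\,\diff r$ finite and renders the two averages reciprocal up to constants, so the estimates must track the logarithmic factors carefully rather than absorbing them into generic constants.
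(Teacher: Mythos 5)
Your argument is correct and follows essentially the same route as the paper: the same split into balls with $|y|<2r$ (reduced to origin-centered balls and the one-dimensional asymptotics $\int_0^R r^{2n-3}\log^{-2}r\,\diff r \approx R^{2n-2}/\log^2R$, $\int_0^R r\log^2 r\,\diff r \approx R^2\log^2R$) and balls with $|y|\ge 2r$, where $\varpi$ is comparable to a constant. Your preliminary translation/dilation normalization is a small addition the paper omits, and your restriction to $n\ge 2$ in the asymptotics is implicitly shared by the paper's L'H\^opital computation (for $n=1$ the stated equivalence, and in fact the lemma, fails), which is the relevant range for the Dirac-source application.
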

\begin{proof}
Let $x_0=0$ for simplicity,
let $B_r=B_r(y)$ be a ball in $\R^n$ of radius $r$ and center $y$, and denote
$\varpi(B_r) = \int_{B_r} \varpi$ and
$\varpi^{-1}(B_r) = \int_{B_r} \varpi^{-1}$. We must
show
\begin{equation}\label{muck}
\varpi(B_r) \, \varpi^{-1}(B_r) \lesssim r^{2n} \qquad \forall \, r>0,
\end{equation}
with a hidden constant depending solely on $d$. We split the proof
into two cases.

\begin{enumerate}[1.]
  \item Case $|y|<2r$: Since $B_r(y) \subset B_{3r}(0)$ we infer that
\[
\varpi(B_r) \lesssim \int_{B_{3r}(0)} \frac{\big(\frac{|x|}{2d}\big)^{n-2}}{\log^2 \frac{|x|}{2d}}
\diff x \lesssim \int_0^{\frac{3r}{2d}} \frac{s^{2n-3}}{\log^2 s} \diff s
\approx \frac{\big(\frac{3r}{2d}\big)^{2n-2}}{\log^2\frac{3r}{2d}}
\]
and
\[
\varpi^{-1}(B_r) \lesssim \int_{B_{3r}(0)} \Big(\frac{|x|}{2d}\Big)^{2-n}
\log^2 \Big(\frac{|x|}{2d} \Big) \diff x
\lesssim \int_0^{\frac{3r}{2d}} s \log^2 s \diff s \approx
  \Big(\frac{3r}{2d}\Big)^2 \log^2 \frac{3r}{2d},
\]
provided $3r<d$.
The equivalences $\approx$ can be checked via L'H\^opital's rule
for $r\to0$. If $3r\ge d$, then both $\varpi(B_r)$ and $\varpi^{-1}(B_r)$ are
bounded by constants depending only on $d$.
Therefore, this yields \eqref{muck}.

  \item Case $|y| \ge 2r$: Since all $x\in B_r(y)$ satisfy
$\frac12 |y| \le |x| \le \frac32 |y|$ we deduce
\[
\varpi \le \min\left\{\frac{\big(\frac{3|y|}{4d}\big)^{n-2}}{\log^2
  \frac{3|y|}{4d}}, \frac{2^{2-n}}{\log^2 2} \right\},
\quad
\varpi^{-1} \le \max\left\{\Big(\frac{|y|}{4d}\Big)^{2-n}
\log^2 \frac{|y|}{4d}, 2^{n-2} \log^2 2 \right\},
\]
whence $\varpi(B_r) \, \varpi^{-1}(B_r)$ satisfies again \eqref{muck}.
\end{enumerate}
This completes the proof.
\end{proof}

The fact that the weight $\varpi \in A_2(\R^n)$ is the key property for the analysis of discretizations 
of problem \eqref{-lap=delta}. Let us apply the results of Theorem~\ref{thm:embeddingdiffmetricsI} to this 
particular weight.

\begin{lemma}[$H^1(\Omega) \hookrightarrow L^2(\varpi^{-1},\Omega)$]
\label{lem:h1l2weight}
Let $\varpi$ be defined in \eqref{eq:defofvarpi}. If $n < 4$, then the following embedding holds:
\[
  H^1(\Omega) \hookrightarrow L^2(\varpi^{-1},\Omega).
\]
\end{lemma}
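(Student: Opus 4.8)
The plan is to deduce the embedding from Theorem~\ref{thm:embeddingdiffmetrics} with Lebesgue exponents $p=q=2$, source weight $\omega\equiv1$ and target weight $\rho=\varpi^{-1}$. The case $n=1$ is disposed of directly: $\varpi^{-1}$ is bounded on $\Omega$ (it behaves like $|x-x_0|\log^2(|x-x_0|/2d)$ near $x_0$ and tends to $0$ there), so $\|v\|_{L^2(\varpi^{-1},\Omega)}\le\|\varpi^{-1}\|_{L^\infty(\Omega)}^{1/2}\|v\|_{L^2(\Omega)}\lesssim\|v\|_{H^1(\Omega)}$. Assume henceforth $n\in\{2,3\}$. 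Two of the hypotheses of Theorem~\ref{thm:embeddingdiffmetrics} are then immediate: $\omega\equiv1$ trivially belongs to $A_2(\R^n)$, and, since $\varpi\in A_2(\R^n)$ by Lemma~\ref{L:varpi-A2}, part \eqref{iv} of Proposition~\ref{pro:properties} (applied with $p=2$, so that $p'=2$ and $\varpi^{-1}=\varpi^{-1/(p-1)}$) gives $\varpi^{-1}\in A_2(\R^n)$; consequently $\varpi^{-1}$ enjoys the strong doubling property \eqref{strong_double} by Proposition~\ref{pro:double}, and, being locally integrable, $\varpi^{-1}(\Omega)<\infty$ because $\Omega$ is bounded.

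The substantive point is to verify the compatibility condition \eqref{weight_scaling} for the pair $(\varpi^{-1},1)$ with $p=q=2$. Since $|B(x,r)|/|B(x,R)|=(r/R)^{n}$, \eqref{weight_scaling} amounts to
\[
  \varpi^{-1}\bigl(B(x,r)\bigr)\ \lesssim\ \Bigl(\tfrac{r}{R}\Bigr)^{n-2}\varpi^{-1}\bigl(B(x,R)\bigr),\qquad x\in\Omega,\quad 0<r\le R.
\]
For $n=2$ this is trivial, since $(r/R)^{n-2}=1$ and $\varpi^{-1}(B(x,r))\le\varpi^{-1}(B(x,R))$ by monotonicity of the measure. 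For $n=3$ I would argue as in the proof of Lemma~\ref{L:varpi-A2}, distinguishing whether the ball $B(x,r)$ meets a neighborhood of $x_0$ or stays away from it. In the second case $\varpi^{-1}$ is comparable to a positive constant on $B(x,R)$ and the bound is clear. In the first case the extremal configuration is a ball centered at $x_0$, where L'H\^opital's rule (exactly as in the estimate of $\varpi^{-1}(B_{3r}(0))$ in Lemma~\ref{L:varpi-A2}) gives $\varpi^{-1}(B(x_0,t))\approx t^{2}\log^2(t/2d)$ as $t\to0$, with $d=\diam(\Omega)$; hence
\[
  \frac{\varpi^{-1}(B(x_0,r))}{r^{\,n-2}}\ \approx\ r^{\,4-n}\log^2(r/2d),
\]
which tends to $0$ and is essentially non-decreasing in $r$ near $0$ \emph{precisely because} $4-n>0$, i.e.\ $n<4$. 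Reducing arbitrary centers and radii to this model estimate, while keeping track of the interplay between the power $r^{4-n}$ and the logarithm, is the main technical obstacle; it is routine in spirit but parallels the somewhat delicate case analysis of Lemma~\ref{L:varpi-A2}. (For $n=4$ the displayed quantity equals $\log^2(r/2d)\to\infty$ as $r\to0$, and \eqref{weight_scaling} genuinely fails.)

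Granted \eqref{weight_scaling}, Theorem~\ref{thm:embeddingdiffmetrics} applies with the above data: given $v\in H^1(\Omega)=W^1_2(1,\Omega)$ it yields a constant $v_\Omega$ with
\[
  \|v-v_\Omega\|_{L^2(\varpi^{-1},\Omega)}\ \lesssim\ \diam(\Omega)\,\varpi^{-1}(\Omega)^{1/2}\,|\Omega|^{-1/2}\,\|\nabla v\|_{L^2(\Omega)}.
\]
Since $v_\Omega$ is a (weighted) mean value of $v$ over a ball containing $\Omega$, $|v_\Omega|\lesssim\|v\|_{H^1(\Omega)}$, so $\|v_\Omega\|_{L^2(\varpi^{-1},\Omega)}=|v_\Omega|\,\varpi^{-1}(\Omega)^{1/2}\lesssim\|v\|_{H^1(\Omega)}$; combining this with the previous display (and $\varpi^{-1}(\Omega)<\infty$) through the triangle inequality gives $\|v\|_{L^2(\varpi^{-1},\Omega)}\lesssim\|v\|_{H^1(\Omega)}$, which is the asserted continuous embedding. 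The restriction $n<4$ is sharp: for $n=4$ one has $\varpi^{-1}(x)\approx|x-x_0|^{-2}\log^2(|x-x_0|/2d)$ near $x_0$, and the logarithmic factor destroys the then critical Hardy inequality $\int_\Omega|v|^2|x-x_0|^{-2}\diff x\lesssim\|\nabla v\|_{L^2(\Omega)}^2$.
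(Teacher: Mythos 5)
Your setup is the right one and essentially the paper's: the lemma reduces to the results of Section~\ref{sub:sec:diff_weight} with $p=q=2$, $\omega\equiv 1$, $\rho=\varpi^{-1}$, and the entire content is the verification of the compatibility condition \eqref{weight_scaling}, which you correctly rewrite as $\varpi^{-1}(B(y,r))\lesssim (r/R)^{n-2}\varpi^{-1}(B(y,R))$ for all centers $y$ and $0<r\le R$. Your auxiliary observations are fine: $\varpi^{-1}\in A_2(\R^n)$ by Lemma~\ref{L:varpi-A2} and duality, hence strongly doubling; the cases $n=1$ (boundedness of $\varpi^{-1}$) and $n=2$ (monotonicity of the measure) are trivial; and your route through Theorem~\ref{thm:embeddingdiffmetrics} plus a bound on the constant $v_\Omega$ is an acceptable variant of the paper's direct appeal to Theorem~\ref{thm:embeddingdiffmetricsI}.

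However, there is a genuine gap: for the only substantive case, $n=3$, you do not actually verify \eqref{weight_scaling}. You compute the model quantity $\varpi^{-1}(B(x_0,t))\approx t^2\log^2(t/2d)$ and then assert that reducing arbitrary centers and radii to this centered model is ``routine in spirit.'' That reduction is not automatic: \eqref{weight_scaling} pins both balls at the same, arbitrary, center $y$, and the extremal configuration is not obviously the ball centered at $x_0$. The paper's proof of this lemma consists precisely of the argument you defer: a case analysis in $|y|$ versus $r$ and $R$ (namely $|y|<2r$, $2r\le |y|<2R$, $|y|\ge 2R$) in which, besides the upper bound $\varpi^{-1}(B_r)\lesssim r^2\log^2(r/2d)$ (replaced by $\varpi^{-1}(B_r)\lesssim r^n(|y|/4d)^{2-n}\log^2(|y|/4d)$ when $|y|\ge 2r$), one must prove a matching \emph{lower} bound for the denominator, e.g.\ $\varpi^{-1}(B_R)\gtrsim (R/d)^2\log^2(R/d)$ when $|y|<2R$, and $\varpi^{-1}(B_R)\gtrsim R^n(|y|/d)^{2-n}\log^2(|y|/d)$ when $|y|\ge 2R$ using $\varpi^{-1}(x)\ge\varpi^{-1}(\tfrac32 y)$ on $B_R$. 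Only after these two-sided estimates does the ratio collapse to $(r/R)^{4-n}$ times a quotient of logarithms, which is exactly where $n<4$ enters. As written, your proposal identifies what must be shown and why $n<4$ matters, but the uniform-in-center verification --- the heart of the proof --- is missing.
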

\begin{proof}
This is an application of Theorem~\ref{thm:embeddingdiffmetricsI}. We must show when condition \eqref{weight_scaling} holds with
$p=q=2$, $\omega=1$ and $\rho=\varpi^{-1}$. In other words, we need to verify
\[
  \Lambda(r,R) := \frac{r^{2-n}}{R^{2-n}} \frac{ \varpi^{-1}(B_r) }{ \varpi^{-1}(B_R) } \lesssim 1, \quad \forall r \in (0, R],
\]
where both $B_r$ and $B_R$ are centered at $y\in\R^n$. We proceed as in Lemma~\ref{L:varpi-A2} and consider now three cases.
\begin{enumerate}[1.]
  \item $|y|<2r$. We know from Lemma~\ref{L:varpi-A2} that $\varpi^{-1}(B_r) \lesssim \big(\frac{3r}{2d}\big)^2 \log^2 \big(\frac{3r}{2d}\big)$.
  Moreover, every $x\in B_R(y)$ satisfies $|x| < |y|+R \le 3R$ whence
  \[
    \varpi^{-1}(B_R) \ge  \int_{B_R} \Big(\frac{3|x|}{2d}\Big)^{2-n}
    \log^2 \Big(\frac{3|x|}{2d}\Big) \diff x \approx \int_0^{\frac{3R}{2d}} s \log^2s \diff s
    \approx \Big(\frac{3R}{2d}\Big)^2
    \log^2 \Big(\frac{3R}{2d}\Big).
  \]
  If $n<4$, then this shows
  \[
    \Lambda(r,R)
    \lesssim  \frac{r^{4-n}}{R^{4-n}} \frac{\log^2\big(\frac{3r}{2d}\big)}
        {\log^2\big(\frac{3R}{2d}\big)} \lesssim 1.
  \]

  \item $2r\le |y| <2R$. We learn from Lemma~\ref{L:varpi-A2} that
  \[
    \varpi^{-1}(B_r) \lesssim
     |B_r|\Big(\frac{|y|}{4d}\Big)^{2-n}\log^2\Big(\frac{|y|}{4d}\Big)
    \lesssim \Big(\frac{r}{2d} \Big)^2\log^2 \Big(\frac{r}{2d}\Big).
  \]
  In addition, any $x\in B_R$ satisfies $|x|\le |y|+R\le 3R$ and the same bound as in Case 1 holds for $\varpi^{-1}(B_R)$. Consequently, $\Lambda(r,R)\lesssim 1$ again for $n<4$.
  
  \item $|y|\ge 2R$. Since still $|y|>2r$ we have for $\varpi^{-1}(B_r)$ the same upper bound as in Case 2. On the other hand, for all $x\in B_R$ we realize that $|x| \le |y| + R \le \frac32 |y|$ and $\varpi^{-1}(x)\ge \varpi^{-1}(\frac32 y)$. Therefore, we deduce
  \[
    \Big(\frac{3R}{d}\Big)^2 \log^2 \frac{3R}{d}
    \lesssim R^n \Big(\frac{3|y|}{2d}\Big)^{2-n} \log^2 \Big(\frac{3|y|}{2d}\Big)
    \lesssim \varpi^{-1}(B_R),
  \]
  which again leads to $\Lambda(r,R) \lesssim 1$ for $n<4$.
\end{enumerate}
This concludes the proof.
\end{proof}

The embedding of Lemma~\ref{lem:h1l2weight} allows us to develop a general theory for equations of the form \eqref{-lap=delta} on weighted spaces. To achieve this, define
\begin{equation}
\label{eq:defofforma}
  a(w,v) = \int_\Omega \mathcal{A} \nabla w\cdot \nabla v + \textbf{b}\cdot\nabla w v + cwv.
\end{equation}
The following results follow \cite{AGM,DAngelo:SINUM}.

\begin{lemma}[inf--sup conditions]
\label{lem:infsup}
The bilinear form $a$, defined in \eqref{eq:defofforma}, satisfies
\begin{align}
\label{inf-sup1}
1 &\lesssim \inf_{w \in H^1_0(\varpi,\Omega)}\sup_{v \in H^1_0(\varpi^{-1},\Omega)}
    \frac{a(w,v)}{\|\nabla w\|_{ L^2(\varpi,\Omega) } \|\nabla v\|_{ L^2(\varpi^{-1},\Omega)} },
\\
\label{inf-sup2}
  1 & \lesssim \inf_{v \in H^1_0(\varpi^{-1},\Omega)}\sup_{w \in H^1_0(\varpi,\Omega)}
    \frac{a(w,v)}{\|\nabla w\|_{ L^2(\varpi,\Omega) }\|\nabla v\|_{ L^2(\varpi^{-1},\Omega)} }.
\end{align}
\end{lemma}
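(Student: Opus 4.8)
Since $\varpi \in A_2(\R^n)$ by Lemma~\ref{L:varpi-A2}, property \eqref{iv} of Proposition~\ref{pro:properties} (with $p = 2$) shows that the conjugate weight $\varpi^{-1}$ also belongs to $A_2(\R^n)$. Hence, by Proposition~\ref{PR:banach}, both $H^1_0(\varpi,\Omega)$ and $H^1_0(\varpi^{-1},\Omega)$ are Hilbert spaces, and each carries a weighted Poincar\'e inequality (\cf \cite[Theorem~1.3]{FKS:82}), so that $\|\nabla \cdot\|_{L^2(\varpi,\Omega)}$ and $\|\nabla \cdot\|_{L^2(\varpi^{-1},\Omega)}$ are norms equivalent to the full Sobolev norms on the respective spaces; moreover $L^2(\varpi,\Omega)' = L^2(\varpi^{-1},\Omega)$ under the unweighted $L^2$ pairing. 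The plan is to verify the hypotheses of the Banach--Ne\v{c}as--Babu\v{s}ka theorem for the form $a$ of \eqref{eq:defofforma} on the pair $H^1_0(\varpi,\Omega) \times H^1_0(\varpi^{-1},\Omega)$.

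First I would check boundedness of $a$. For the principal term, write $\mathcal{A}\nabla w\cdot\nabla v = (\varpi^{1/2}\mathcal{A}\nabla w)\cdot(\varpi^{-1/2}\nabla v)$ and use $\mathcal{A}\in L^\infty(\Omega)$ together with Cauchy--Schwarz to bound it by $\|\nabla w\|_{L^2(\varpi,\Omega)}\|\nabla v\|_{L^2(\varpi^{-1},\Omega)}$. For the lower order terms, estimate $|\int_\Omega \mathbf b\cdot\nabla w\,v| \le \|\mathbf b\|_{L^\infty}\|\nabla w\|_{L^2(\varpi,\Omega)}\|v\|_{L^2(\varpi^{-1},\Omega)}$ and $|\int_\Omega cwv| \le \|c\|_{L^\infty}\|w\|_{L^2(\varpi,\Omega)}\|v\|_{L^2(\varpi^{-1},\Omega)}$, and absorb the $L^2$-factors into the gradient norms via the two weighted Poincar\'e inequalities.

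The substance is the inf--sup bound. I would split $a = a_0 + b$ with $a_0(w,v) = \int_\Omega \mathcal{A}\nabla w\cdot\nabla v$ and $b(w,v) = \int_\Omega \mathbf b\cdot\nabla w\,v + cwv$, and study the associated operator $\mathcal{L} = \mathcal{L}_0 + \mathcal{B} \colon H^1_0(\varpi,\Omega) \to H^1_0(\varpi^{-1},\Omega)'$. The first step is to show that $\mathcal{L}_0$ is an isomorphism, i.e.\ that $\|\nabla w\|_{L^2(\varpi,\Omega)} \lesssim \sup_v a_0(w,v)/\|\nabla v\|_{L^2(\varpi^{-1},\Omega)}$ together with the transposed statement. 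Here the $A_2$-structure of $\varpi$, and the precise logarithmic correction in \eqref{eq:defofvarpi}, enter: since $\mathcal{A}$ is symmetric and uniformly elliptic, one reduces to $\mathcal{A} = I$ and then to a weighted a priori estimate for $-\Delta$ between $H^1_0(\varpi,\Omega)$ and $H^1_0(\varpi^{-1},\Omega)$, which follows from the weighted Poincar\'e inequality combined with a Hardy-type inequality adapted to $\varpi$ near $x_0$; this is the content of \cite[Theorem~1.3]{AGM} (see also \cite{DAngelo:SINUM}). The second step is that $\mathcal{B}$ is compact: it factors through the embeddings $H^1_0(\varpi,\Omega) \hookrightarrow L^2(\varpi,\Omega)$ and $H^1_0(\varpi^{-1},\Omega) \hookrightarrow L^2(\varpi^{-1},\Omega)$, which are compact for $A_2$ weights on the bounded domain $\Omega$ (\cf \cite[Theorem~1.3]{FKS:82}), so $\mathcal{B}$ is a composition of a compact map with bounded ones. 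Therefore $\mathcal{L}$ is a compact perturbation of an isomorphism, hence Fredholm of index zero, and it is itself an isomorphism as soon as it is injective; injectivity, namely that $a(w,v) = 0$ for all $v$ forces $w = 0$, holds under the standing assumptions on $\mathbf b$ and $c$, again by \cite[Theorem~1.3]{AGM}. This gives \eqref{inf-sup1}; the transposed inequality \eqref{inf-sup2} follows by the same reasoning applied to the adjoint form $a^\ast(v,w) := a(w,v)$, which after one integration by parts has the same structure, with $\mathbf b$ replaced by $-\mathbf b$ and $c$ by $c - \DIV\,\mathbf b \in L^\infty(\Omega)$.

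The main obstacle I expect is the isomorphism property of $\mathcal{L}_0$, i.e.\ the weighted a priori estimate for the principal part. Unlike the coercive setting covered by Lax--Milgram, where the weight sits inside the coefficient (as in \cite[Theorem~2.2]{FKS:82}), here $a_0$ is genuinely non-coercive because $w$ and its test functions live in different spaces, and the estimate rests on a sharp, logarithmically corrected Hardy inequality at $x_0$ --- which is precisely why the factor $\log^{-2}\dist(x)$ appears in \eqref{eq:defofvarpi}. Checking the compactness of $\mathcal{B}$ and the injectivity of $\mathcal{L}$ is comparatively routine.
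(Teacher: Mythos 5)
Your overall architecture (bounded form, principal part an isomorphism, lower-order terms compact, Fredholm alternative, then transpose) is coherent, but the two steps that carry all the weight are not proved: you defer both the isomorphism of $\mathcal L_0\colon H^1_0(\varpi,\Omega)\to H^1_0(\varpi^{-1},\Omega)'$ and the injectivity of $\mathcal L$ to \cite[Theorem~1.3]{AGM}. That reference (like \cite{DAngelo:SINUM}) treats pure powers of the distance, not the logarithmically corrected weight $\varpi$ of \eqref{eq:defofvarpi}; indeed the paper itself invokes \cite[Theorem~1.3]{AGM} \emph{together with} the present lemma to get well-posedness, so using it to prove the lemma is essentially circular. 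Moreover your proposed mechanism for the missing step (a ``sharp, logarithmically corrected Hardy inequality at $x_0$'') is not what is needed and is nowhere substantiated; you yourself flag it as the main obstacle. The injectivity step is also genuinely delicate here: since $\varpi\lesssim 1\lesssim\varpi^{-1}$ near $x_0$, the trial space $H^1_0(\varpi,\Omega)$ is \emph{larger} than $H^1_0(\Omega)$ while the test space $H^1_0(\varpi^{-1},\Omega)$ is smaller (it contains, e.g., Green-function-type singularities in the trial space), so uniqueness in $H^1_0(\Omega)$ does not transfer and cannot simply be cited.

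The paper's proof fills exactly these holes using only the Muckenhoupt structure of $\varpi$ (Lemma~\ref{L:varpi-A2}) and positive definiteness of $\mathcal A$. For the principal part it establishes an $\mathcal A$-orthogonal decomposition \eqref{decomposition}--\eqref{estimate} of $L^2(\varpi^{-1},\Omega)$ via a mixed formulation and the generalized Babu\v{s}ka--Brezzi theory \cite{BCM:88}; applying it to $\mathbf q=\varpi\nabla w$ produces a test function $v$ with $a_0(w,v)\approx\|\nabla w\|^2_{L^2(\varpi,\Omega)}$ and $\|\nabla v\|_{L^2(\varpi^{-1},\Omega)}\lesssim\|\nabla w\|_{L^2(\varpi,\Omega)}$, which is \eqref{inf-sup1} for the principal part --- no Hardy-type inequality enters. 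The lower-order terms are then handled constructively by splitting $w=w_1+w_2$ as in \eqref{eq:w1}--\eqref{eq:w2}, where $w_2$ solves a standard unweighted problem; this is where the embedding $H^1(\Omega)\hookrightarrow L^2(\varpi^{-1},\Omega)$ of Lemma~\ref{lem:h1l2weight} (hence $n<4$) is used, and the Ne\v{c}as inf--sup theory then yields both \eqref{inf-sup1} and \eqref{inf-sup2}. If you want to keep your Fredholm route, you must supply a self-contained proof of the isomorphism for $a_0$ (the decomposition argument above is one) and an independent uniqueness argument for $\mathcal L$ on the asymmetric pair of spaces; otherwise the proposal, as written, has a genuine gap at its core.
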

\begin{proof}
We divide the proof into several steps:
\begin{enumerate}[1.]
  \item We first obtain an orthogonal decomposition of $L^2(\varpi^{-1},\Omega)$ \cite[Lemma 2.1]{DAngelo:SINUM}:
   for every $\mathbf{q} \in L^2(\varpi^{-1},\Omega)$ there is a unique couple 
  $(\boldsymbol{\sigma},v)\in \polX:=L^2(\varpi^{-1},\Omega) \times H^1_0(\varpi^{-1},\Omega)$ such that
  \begin{gather}
  \label{decomposition}
    \mathbf{q} = \boldsymbol{\sigma} + \nabla v, \qquad
    \int_\Omega \mathcal{A}\boldsymbol{\sigma} \cdot \nabla w = 0, \qquad \forall w \in H^1_0(\varpi,\Omega),
\\
  \label{estimate}
    \|\boldsymbol{\sigma}\|_{L^2(\varpi^{-1},\Omega)} + \|\GRAD v\|_{L^2(\varpi^{-1},\Omega)} 
    \lesssim \|\mathbf{q}\|_{L^2(\varpi^{-1},\Omega)}.
  \end{gather}
  To see this, we let 
    $\polY :=L^2(\varpi^{-1},\Omega) \times H^1_0(\varpi,\Omega)$,
    write \eqref{decomposition} in mixed form 
\begin{equation*}
\mathcal{B}[(\boldsymbol{\sigma},v), (\boldsymbol{\tau},w)] :=
\int_\Omega \boldsymbol{\sigma}\cdot\boldsymbol{\tau}
+ \int_\Omega \nabla v\cdot\boldsymbol{\tau}
+ \int_\Omega \mathcal{A} \boldsymbol{\sigma}\cdot\nabla w
= \int_\Omega \mathbf{q}\cdot\boldsymbol{\tau}
\quad\forall \, (\boldsymbol{\tau},w)\in \polY,
\end{equation*}
and apply the generalized Babu{\v{s}}ka-Brezzi inf--sup theory
\cite[Theorem 2.1]{BCM:88}, \cite[Lemma 2.1]{DAngelo:SINUM}. 
This requires only that $\mathcal{A}$ be positive definite along with
the trivial fact that $\phi \in L^2(\varpi^{-1},\Omega)$
implies $\varpi^{-1} \phi \in L^2(\varpi,\Omega)$.

  \item Set $|\mathbf{b}|=c=0$ and let $w \in H^1_0(\varpi,\Omega)$ be given. According to Step 1 we can 
  decompose $\mathbf{q} = \varpi \nabla w \in L^2(\varpi^{-1},\Omega) $ into 
  $\mathbf{q} = \boldsymbol{\sigma} + \nabla v$. 
Invoking \eqref{decomposition}, as in \cite[Corollary 2.2]{DAngelo:SINUM} 
  and \cite[Proposition 1.1]{AGM}, we infer that
  \[
  \int_{\Omega} \mathcal{A} \nabla w \cdot \nabla v = 
    \int_{\Omega} \mathcal{A}  \nabla w \cdot \mathbf{q}
   - \int_{\Omega}  \mathcal{A}  \nabla w \cdot \mathbf{\sigma} 
   = \int_\Omega \varpi \mathcal{A}  \nabla w \cdot \nabla w
   \approx \int_{\Omega} \varpi | \nabla w |^2, 
  \]
 whence, using \eqref{estimate} in the form $\| \nabla v\|_{L^2(\varpi^{-1},\Omega)} \lesssim \| \nabla w \|_{L^2(\varpi,\Omega)}$, we deduce the inf--sup condition \eqref{inf-sup1}.
  
  \item As in \cite{AGM}, we show that for every $F \in
    H^1_0(\varpi^{-1},\Omega)'$ the problem
  \[
  w \in H^1_0(\varpi,\Omega): \quad
    a(w,v) = \langle F, v \rangle, \quad \forall v \in H^1_0(\varpi^{-1},\Omega),
  \]
  is well posed. To this end, we decompose $w=w_1 + w_2\in
  H^1_0(\varpi,\Omega)$, with
  \begin{gather}
  \label{eq:w1}
  w_1 \in  H^1_0(\varpi,\Omega): \quad
    \int_\Omega \mathcal{A} \nabla w_1 \cdot \nabla v = \langle F, v \rangle, \quad \forall v \in H^1_0(\varpi^{-1},\Omega),
\\
  \label{eq:w2}
  w_2 \in H^1_0(\Omega): \quad
    a(w_2,v) = - \int_\Omega \left( \mathbf{b}\cdot \GRAD w_1 + c w_1 \right) v, \quad \forall v \in H^1_0(\Omega).
  \end{gather}
  In fact, if problems \eqref{eq:w1} and \eqref{eq:w2} have a
  unique solution, then we obtain
  \begin{align*}
    a(w,v) &= a(w_1+w_2,v) \\
    &= \int_\Omega \mathcal{A} \GRAD w_1 \cdot \GRAD v + \int_\Omega \left( \mathbf{b}\cdot \GRAD w_1 + c w_1 \right) v + a(w_2,v) 
    = \langle F, v \rangle,
  \end{align*}
 for any $v \in H^1_0(\varpi^{-1},\Omega) \subset H_0^1(\Omega)$.
  The conclusion of Step 2 shows that \eqref{eq:w1} is well posed. The Cauchy-Schwarz inequality and Lemma~\ref{lem:h1l2weight} yield
  \[
    \int_\Omega \left( \mathbf{b}\cdot \GRAD w_1 + c w_1 \right) v \lesssim
    \| w_1 \|_{H^1(\varpi,\Omega)}  \| v \|_{L^2(\varpi^{-1},\Omega)}
    \lesssim \| F\|_{H^1_0(\varpi^{-1},\Omega)'}\| \nabla v\|_{L^2(\varpi^{-1},\Omega)},
  \]
  which combines with the fact that $a(\cdot,\cdot)$ satisfies
   the inf--sup condition in $H^1_0(\Omega)$ \cite[Theorem 5.3.2 - Part
   I]{BA:72} to
  show that \eqref{eq:w2} is well posed as well.
\end{enumerate}
Finally, the general inf--sup theory \cite{MR2050138} \cite[Theorem 2]{NSV} 
guarantees the validity
of the two inf--sup conditions \eqref{inf-sup1} and \eqref{inf-sup2}.
This concludes the proof.
\end{proof}

We also have the following discrete counterpart of Lemma \ref{lem:infsup}.
We refer to \cite[Lemma 3.3]{DAngelo:SINUM} and \cite[Theorem 2.1]{AGM} 
for similar results which, however, do not exploit the Muckenhoupt
structure of the weight $\varpi$.

\begin{lemma}[discrete inf--sup conditions]
\label{lem:discinfsup}
Let $\T$ be a quasi-uniform mesh of size $h$ consisting of simplices. 
If $\V(\T)$ is made of piecewise linears, then the bilinear form $a$, 
defined in \eqref{eq:defofforma}, satisfies:
\begin{align*}
  1 &\lesssim \inf_{W \in \V(\T)}\sup_{V \in \V(\T)}
    \frac{a(W,V)}{\|\nabla W\|_{L^2(\varpi,\Omega)}\|\nabla V\|_{L^2(\varpi^{-1},\Omega)}}, \\
  1 &\lesssim \inf_{V \in \V(\T)}\sup_{W \in \V(\T)}
    \frac{a(W,V)}{\|\nabla W\|_{L^2(\varpi,\Omega)}\|\nabla V\|_{L^2(\varpi^{-1},\Omega)}}.
\end{align*}
where the hidden constants depend on $C_{2,\varpi}$ but not on $h$.
\end{lemma}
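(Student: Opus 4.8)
The plan is to combine the continuous inf--sup conditions of Lemma~\ref{lem:infsup} with a Fortin-type argument, treating the first- and zero-order terms of $a$ in \eqref{eq:defofforma} as a lower-order perturbation of the principal part; quasi-uniformity will enter only through weighted stability, error and inverse estimates for a discrete elliptic projection. I will establish the first inequality; the second one follows by the identical argument applied to the adjoint form $a^*(v,w):=a(w,v)$ and \eqref{inf-sup2} (equivalently, on the finite-dimensional space $\V(\T)$ the left and right inf--sup constants of $a$ coincide).

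Split $a=a_0+\ell$, with $a_0(w,v)=\int_\Omega\A\GRAD w\cdot\GRAD v$ and $\ell(w,v)=\int_\Omega(\mathbf{b}\cdot\GRAD w)v+cwv$. Since $\A$ is uniformly elliptic, $a_0$ is coercive on $H^1_0(\Omega)$, hence the discrete elliptic projection $R_\T\colon H^1_0(\Omega)\to\V(\T)$ defined by $a_0(W,R_\T v)=a_0(W,v)$ for all $W\in\V(\T)$ is well defined; note $H^1_0(\varpi^{-1},\Omega)\hookrightarrow H^1_0(\Omega)$. The crux of the proof is the pair of weighted estimates
\begin{equation}
\label{eq:Rproj-stab}
\|\GRAD R_\T v\|_{L^2(\varpi^{-1},\Omega)}\lesssim\|\GRAD v\|_{L^2(\varpi^{-1},\Omega)},
\qquad
\|v-R_\T v\|_{L^2(\varpi^{-1},\Omega)}\lesssim h\,\|\GRAD v\|_{L^2(\varpi^{-1},\Omega)},
\end{equation}
with constants independent of $h$ and depending on $\varpi$ only through $C_{2,\varpi}$; here $\varpi^{-1}\in A_2(\R^n)$ by Proposition~\ref{pro:properties} \eqref{iv} and Lemma~\ref{L:varpi-A2}. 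These are the weighted analogs of the classical $W^1_2$-stability and Aubin--Nitsche estimates for the Ritz projection on quasi-uniform meshes, and I expect them to be the main obstacle. They can be obtained from the interpolation estimates of Theorem~\ref{TH:v-Pikp} applied to the weight $\varpi^{-1}$, the inverse inequalities available on quasi-uniform meshes, a Schatz--Wahlbin-type dyadic decomposition of $\Omega$ around the singular point $x_0$ (where $\varpi$ degenerates), and weighted elliptic regularity on the convex polyhedron $\Omega$.

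Granted \eqref{eq:Rproj-stab}, the argument is short. Fix $W\in\V(\T)$ with $\|\GRAD W\|_{L^2(\varpi,\Omega)}=1$; by \eqref{inf-sup1} there is $v\in H^1_0(\varpi^{-1},\Omega)$ with $\|\GRAD v\|_{L^2(\varpi^{-1},\Omega)}=1$ and $a(W,v)\gtrsim1$. Set $V:=R_\T v\in\V(\T)$. Since $a_0(W,R_\T v)=a_0(W,v)$,
\[
a(W,V)=a_0(W,v)+\ell(W,R_\T v)=a(W,v)-\ell(W,v-R_\T v).
\]
Writing $\varpi=\varpi^{1/2}\cdot\varpi^{1/2}$ in $\ell$ and using the weighted Poincar\'e inequality for $\varpi\in A_2(\R^n)$ (Theorem~\ref{thm:embeddingdiffmetrics}, \cf \cite[Theorem~1.3]{FKS:82}) to bound $\|W\|_{L^2(\varpi,\Omega)}\lesssim\|\GRAD W\|_{L^2(\varpi,\Omega)}$, we obtain
\[
|\ell(W,v-R_\T v)|\lesssim\|\GRAD W\|_{L^2(\varpi,\Omega)}\,\|v-R_\T v\|_{L^2(\varpi^{-1},\Omega)}\lesssim h
\]
by the second estimate in \eqref{eq:Rproj-stab}. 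Hence $a(W,V)\gtrsim1-Ch\gtrsim1$ for $h$ below a fixed threshold depending only on the data (the usual mesh-size restriction for indefinite problems), while the first estimate in \eqref{eq:Rproj-stab} gives $\|\GRAD V\|_{L^2(\varpi^{-1},\Omega)}\lesssim1$. Therefore $\sup_{V\in\V(\T)}a(W,V)/\big(\|\GRAD W\|_{L^2(\varpi,\Omega)}\|\GRAD V\|_{L^2(\varpi^{-1},\Omega)}\big)\gtrsim1$, which is the first discrete inf--sup condition; the second follows as indicated, with the roles of $\varpi$ and $\varpi^{-1}$ exchanged in \eqref{eq:Rproj-stab}. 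This mirrors \cite[Lemma~3.3]{DAngelo:SINUM} and \cite[Theorem~2.1]{AGM}, the point being that the Muckenhoupt structure of $\varpi$ makes all constants depend on $\varpi$ only through $C_{2,\varpi}$.
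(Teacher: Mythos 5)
Your reduction of the discrete inf--sup to the continuous one of Lemma~\ref{lem:infsup} via a Ritz projection is fine in outline, but it displaces the entire difficulty into the two unproven estimates for $R_\T$: stability of $\GRAD R_\T$ in $L^2(\varpi^{-1},\Omega)$ and the $O(h)$ error bound $\|v-R_\T v\|_{L^2(\varpi^{-1},\Omega)}\lesssim h\|\GRAD v\|_{L^2(\varpi^{-1},\Omega)}$, with constants depending on $\varpi$ only through $C_{2,\varpi}$ (and the analogous estimates with $\varpi$ in place of $\varpi^{-1}$ for the transposed condition). These do not follow from the interpolation theory of this paper: C\'ea's lemma for the coercive form $a_0$ controls the error only in the \emph{unweighted} energy norm, and weighted stability of the Ritz projection is a genuinely deep property, normally obtained from Schatz--Wahlbin local pointwise estimates or Green's function bounds, which require smooth coefficients --- here $\mathcal{A}$ is only piecewise smooth --- while the weighted Aubin--Nitsche step would need elliptic regularity in spaces weighted by $\varpi^{-1}$, a weight that is singular at the interior point $x_0$, which the quasi-uniform mesh is not assumed to resolve. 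Declaring that these bounds ``can be obtained'' from a dyadic decomposition and weighted regularity is a program, not a proof, and it is not clear it can be carried out under the stated hypotheses; this is precisely the step you yourself identify as the main obstacle. A secondary issue is that your argument only yields the inf--sup for $h$ below a threshold coming from the perturbation bound $1-Ch$, whereas the principal part should not require any smallness of $h$.

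For contrast, the paper never touches the Ritz projection: it mimics the continuous proof at the discrete level. One introduces the space $\V_0(\T)$ of piecewise constant vector fields, proves a discrete counterpart of the decomposition \eqref{decomposition}--\eqref{estimate}, and verifies the key discrete inf--sup for the mixed form by the explicit choice $\mathbf{T}=\varpi_\T\GRAD W$, where $\varpi_\T|_T=\fint_T\varpi$. Since $W$ is piecewise linear on simplices, $\GRAD W$ is piecewise constant, so $\int_\Omega\mathcal{A}\mathbf{T}\cdot\GRAD W\approx\int_\Omega\varpi|\GRAD W|^2$, while the elementwise $A_2$ condition gives $\|\mathbf{T}\|_{L^2(\varpi^{-1},\Omega)}^2\le C_{2,\varpi}\int_\Omega\varpi|\GRAD W|^2$. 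This is exactly where the hypotheses ``simplices and piecewise linears'' enter, and it is why the constant depends only on $C_{2,\varpi}$ and is free of $h$, without any Green's function or weighted regularity machinery. If you want to salvage your route, you would have to supply full proofs of the weighted Ritz estimates (or a citable reference valid for piecewise smooth $\mathcal{A}$ on convex polyhedra and the specific weight $\varpi$), which is a far heavier investment than the elementary argument above.
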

\begin{proof}
We proceed as in Lemma~\ref{lem:infsup}. We define the spaces of piecewise constants
\[
  \V_0(\T) = \W_{0}(\T) =
  \left\{ \mathbf{Q} \in L^\infty(\Omega): \mathbf{Q}_{|T} \in \R^n, \ \forall T \in \T \right\},
\]
those of piecewise linears $\V_{1}(\T)=\W_{1}(\T)=\V(\T)$, and endow the product spaces
$\V_{0}(\T)\times\V_{1}(\T)$ and $\W_{0}(\T)\times\W_{1}(\T)$ with the norms
of $\polX$ and $\polY$ respectively, the latter spaces being defined in Lemma~\ref{lem:infsup}.
Given $\mathbf{Q}\in\V_{0}(\T)$, we need the following orthogonal
decomposition --- a discrete counterpart of \eqref{decomposition}-\eqref{estimate}:
find $\Sigma\in\V_0(\T), V\in\V_1(\T)$ so that
\begin{gather}
  \label{decomposition-disc}
    \mathbf{Q} = \boldsymbol{\Sigma} + \nabla V, \qquad
    \int_\Omega \mathcal{A}\boldsymbol{\Sigma} \cdot \nabla W = 0, 
    \qquad \forall W \in \W_{1}(\T),
\\
  \label{estimate-disc}
    \|\boldsymbol{\Sigma}\|_{L^2(\varpi^{-1},\Omega)} + \|\GRAD V\|_{L^2(\varpi^{-1},\Omega)} 
    \lesssim \|\mathbf{Q}\|_{L^2(\varpi^{-1},\Omega)}.
\end{gather}

We first have to verify that the bilinear form $\mathcal{B}$ satisfies
a discrete inf--sup condition, as in Step 1 of Lemma~\ref{lem:infsup}.
We just prove the most problematic inf--sup
\begin{equation*}
\|\nabla W\|_{L^2(\varpi,\Omega)} \lesssim \sup_{\mathbf{T}\in\V_{0}(\T)} \frac{\int_\Omega
\mathcal{A}\mathbf{T}\cdot\nabla W}{\|\mathbf{T}\|_{ L^2(\varpi^{-1},\Omega) }}.
\end{equation*}
We let
$\mathbf{T}=\varpi_\T\nabla W \in \V_{0}(\T)$, where $\varpi_\T$ is
the piecewise constant weight defined on each element $T\in\T$ as
$\varpi_\T|_T = |T|^{-1}\int_T \varpi$.
Since $\nabla W \in \V_{0}(\T)$, we get
\[
\int_\Omega \mathcal{A}\mathbf{T}\cdot\nabla W = 
\int_\Omega \varpi_\T \mathcal{A} \nabla W \cdot\nabla W
\approx \int_\Omega \varpi_\T \nabla W \cdot\nabla W
= \int_\Omega \varpi |\nabla W|^2,
\]
and
\[
\int_\Omega \varpi^{-1}|\mathbf{T}|^2 = \sum_{T\in\T}
\int_T |T|^{-2} \varpi^{-1} \left(\int_T \varpi\right)^2 |\nabla W_{|T}|^2
\le C_{2,\varpi} \int_\Omega \varpi |\nabla W|^2.
\]
We employ a similar calculation to perform Step 2 of
Lemma~\ref{lem:infsup}, and the rest is exactly the same as in 
Lemma~\ref{lem:infsup}. The proof is thus complete.
\end{proof}

The numerical analysis of a finite element approximation to the solution of problem \eqref{-lap=delta} is now a consequence of the interpolation estimates developed in section~\ref{sub:sec:diff_weight}.

\begin{corollary}[error estimate for elliptic problems with Dirac sources]
\label{co:delta_estimates}
Assume \\
that $n<4$ and let $u \in H_0^1(\varpi,\Omega)$ be the solution of \eqref{-lap=delta} and
$U_{\T} \in \V(\T)$ be the finite element solution to
\eqref{-lap=delta}. If $\T$ is simplicial, quasi-uniform
and of size $h$, we have the following error estimate
\begin{equation}
\label{estimate_co_2}
 \| u - U_{\T} \|_{L^2(\Omega)} \lesssim h^{2-n/2}|\log h| \| \nabla u \|_{L^2(\varpi,\Omega)}.
\end{equation}
\end{corollary}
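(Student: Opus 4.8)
The plan is an Aubin--Nitsche duality argument in which the interpolation error of the dual solution is measured in the weighted norm $\|\cdot\|_{L^2(\varpi^{-1},\Omega)}$; this is precisely where the factor $h^{2-n/2}|\log h|$ is produced. I would start from
\[
  \|u-U_{\T}\|_{L^2(\Omega)} = \sup\bigl\{ (u-U_{\T},g)_{L^2(\Omega)} \ :\ g\in C_0^{\infty}(\Omega),\ \|g\|_{L^2(\Omega)}\leq 1 \bigr\},
\]
and for each such $g$ let $\phi_g$ denote the dual solution, i.e. the weak solution of the adjoint of \eqref{-lap=delta} with homogeneous Dirichlet data, characterized by $a(w,\phi_g)=(g,w)_{L^2(\Omega)}$ for all $w\in H^1_0(\varpi,\Omega)$ with $a$ as in \eqref{eq:defofforma}. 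Because $n<4$, the embedding established in Lemma~\ref{lem:h1l2weight} --- equivalently, condition \eqref{weight_scaling} for the pair $(1,\varpi^{-1})\in A_2(\R^n)\times A_2(\R^n)$ with $p=q=2$ --- together with the inf--sup conditions of Lemma~\ref{lem:infsup}, shows that $\phi_g$ exists in $H^1_0(\varpi^{-1},\Omega)$. Since $\Omega$ is convex and the data of \eqref{-lap=delta} are sufficiently regular, classical elliptic regularity additionally yields $\phi_g\in H^2(\Omega)$ with $|\phi_g|_{H^2(\Omega)}\lesssim\|g\|_{L^2(\Omega)}$.

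Next I would invoke Galerkin orthogonality: since $\varpi^{-1}\in\Loneloc(\R^n)$ and the shape functions have bounded gradients, $\V(\T)\subset H^1_0(\varpi^{-1},\Omega)$, so $a(u-U_{\T},V)=0$ for every $V\in\V(\T)$ and hence
\[
  (u-U_{\T},g)_{L^2(\Omega)} = a(u-U_{\T},\phi_g) = a\bigl(u-U_{\T},\,\phi_g-\Pi_{\T}\phi_g\bigr).
\]
The continuity of $a$ on $H^1_0(\varpi,\Omega)\times H^1_0(\varpi^{-1},\Omega)$ (Cauchy--Schwarz after the splitting $fg=(f\varpi^{1/2})(g\varpi^{-1/2})$ and boundedness of $\mathcal{A},\textbf{b},c$), together with the weighted Poincar\'e inequality applied to both $u-U_{\T}$ and $\phi_g-\Pi_{\T}\phi_g$ (which lie in $H^1_0(\varpi,\Omega)$ and $H^1_0(\varpi^{-1},\Omega)$ respectively), gives
\[
  |(u-U_{\T},g)_{L^2(\Omega)}| \lesssim \|\nabla(u-U_{\T})\|_{L^2(\varpi,\Omega)}\ \|\nabla(\phi_g-\Pi_{\T}\phi_g)\|_{L^2(\varpi^{-1},\Omega)}.
\]
For the first factor, the discrete inf--sup conditions of Lemma~\ref{lem:discinfsup} and the continuity of $a$ yield quasi-optimality of $U_{\T}$, which with the trivial choice $V=0$ gives $\|\nabla(u-U_{\T})\|_{L^2(\varpi,\Omega)}\lesssim\|\nabla u\|_{L^2(\varpi,\Omega)}$.

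It remains to estimate $\|\nabla(\phi_g-\Pi_{\T}\phi_g)\|_{L^2(\varpi^{-1},\Omega)}$, and this is where the rate enters. Applying Theorem~\ref{thm:W1qrhow2pomega} with $(\omega,\rho)=(1,\varpi^{-1})$, $p=q=2$ and $v=\phi_g\in W^2_2(1,S_T)=H^2(S_T)$ (the hypothesis \eqref{weight_scaling} holds by Lemma~\ref{lem:h1l2weight} since $n<4$) gives, on each $T\in\T$,
\[
  \|\nabla(\phi_g-\Pi_{\T}\phi_g)\|_{L^2(\varpi^{-1},T)} \lesssim h_T\Bigl(\tfrac{\varpi^{-1}(S_T)}{|S_T|}\Bigr)^{1/2}|\phi_g|_{H^2(S_T)}.
\]
Squaring, summing over $T$, and using quasi-uniformity ($h_T\approx h$, $|S_T|\approx h^n$) and the finite overlap of the patches yields $\|\nabla(\phi_g-\Pi_{\T}\phi_g)\|_{L^2(\varpi^{-1},\Omega)}^2\lesssim h^2\bigl(\max_{T\in\T}\fint_{S_T}\varpi^{-1}\bigr)|\phi_g|_{H^2(\Omega)}^2$. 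The key computation is the geometric bound $\max_{T\in\T}\fint_{S_T}\varpi^{-1}\lesssim h^{2-n}|\log h|^2$: since $\varpi^{-1}$ decreases away from $x_0$, the average is largest on a patch near $x_0$, and there $\fint_{S_T}\varpi^{-1}\approx h^{-n}\int_0^{ch}s^{2-n}\log^2 s\cdot s^{n-1}\diff s = h^{-n}\int_0^{ch}s\log^2 s\,\diff s\approx h^{2-n}\log^2 h$. Combining these facts gives $\|\nabla(\phi_g-\Pi_{\T}\phi_g)\|_{L^2(\varpi^{-1},\Omega)}\lesssim h^{2-n/2}|\log h|\,\|g\|_{L^2(\Omega)}$, and inserting this and the bound on the first factor into the displays above, then taking the supremum over $g$, yields \eqref{estimate_co_2}. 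The main obstacle is exactly this sharp geometric weight estimate: the crude bound $\varpi^{-1}(S_T)\leq\varpi^{-1}(\Omega)$ would only produce the suboptimal rate $h^{1-n/2}$, so one must exploit both that the singularity of $\varpi^{-1}$ at $x_0$ is integrable and that quasi-uniformity confines $x_0$ to a ball of radius $\mathcal{O}(h)$; a secondary point is checking that $\phi_g\in H^2(\Omega)$ and $\V(\T)\subset H^1_0(\varpi^{-1},\Omega)$ so that the adjoint problem and Galerkin orthogonality are justified.
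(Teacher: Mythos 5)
Your proposal is correct and follows essentially the same Aubin--Nitsche duality argument as the paper: convexity-based $H^2$ regularity of the adjoint solution, the embedding of Lemma~\ref{lem:h1l2weight} (i.e.\ \eqref{weight_scaling} for the pair $(1,\varpi^{-1})$ when $n<4$), the interpolation estimate of Theorem~\ref{thm:W1qrhow2pomega}, and the sharp patchwise bound $\fint_{S_T}\varpi^{-1}\lesssim h^{2-n}|\log h|^2$ that yields the rate $h^{2-n/2}|\log h|$. The only (inessential) difference is bookkeeping: the paper introduces the discrete adjoint solution $\Phi_\T$ and uses the adjoint quasi-optimality from Lemma~\ref{lem:discinfsup} so that only $\|\nabla u\|_{L^2(\varpi,\Omega)}$ appears, whereas you pair $u-U_\T$ with $\phi_g-\Pi_\T\phi_g$ and invoke the primal quasi-optimality $\|\nabla(u-U_\T)\|_{L^2(\varpi,\Omega)}\lesssim\|\nabla u\|_{L^2(\varpi,\Omega)}$, which rests on the same discrete inf--sup conditions.
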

\begin{proof}
We employ a duality argument. Let $\varphi \in H_0^1(\Omega)$ be the solution of
\begin{equation}
\label{-lap=u-U}
a(v,\varphi) =
\int_\Omega (u-U_\T) v \quad\forall \, v\in H^1_0(\Omega),
\end{equation}
which is the adjoint of \eqref{-lap=delta}.
Since $\Omega$ is convex and polyhedral, and the coefficients
$\mathcal{A},\textbf{b},c$ are sufficiently smooth, we have the
standard regularity pick-up \cite{GT}:
\begin{equation}
\label{phiH^2}
\| \varphi \|_{H^2(\Omega)} \lesssim \| u - U_{\T}\|_{L^2(\Omega)}.
\end{equation}
This, together with Lemma~\ref{lem:h1l2weight}, allows us to conclude that, if $n<4$,
\[
  \varphi \in H^2(\Omega) \cap H^1_0(\Omega) \hookrightarrow H^1_0(\varpi^{-1},\Omega).
\]
Moreover, Theorem~\ref{thm:W1qrhow2pomega} yields the error estimate
\[
  \|\nabla(\varphi-\Pi_\T\varphi)\|_{L^2(\varpi^{-1},\Omega)} \lesssim \sigma(h) \|\varphi\|_{H^2(\Omega)}.
\]
with
\[
  \sigma(h) = h \big(\varpi^{-1}(B_h)\big)^{\frac12} |B_h|^{-\frac12} \lesssim h^{2-\frac{n}{2}} |\log h|.
\]

Let $\Phi_{\T}\in\V(\T)$ be the Galerkin solution to
\eqref{-lap=u-U}. Galerkin orthogonality and the continuity of the form $a$ on
$H^1_0(\varpi,\Omega)\times H^1_0(\varpi^{-1},\Omega)$ yield
\begin{equation}
 \| u - U_{\T}\|^2_{L^2(\Omega)}
 = a( u , \varphi - \Phi_{\T} )
  \lesssim \| \nabla u \|_{L^2(\varpi,\Omega)}
\| \nabla ( \varphi - \Phi_{\T} ) \|_{L^2(\varpi^{-1},\Omega)}.
\label{eq:L2est}
\end{equation}
The discrete inf--sup conditions of Lemma~\ref{lem:discinfsup} and and the continuity of the form $a$
allow us to conclude that
\[
  \| \nabla ( \varphi - \Phi_{\T} ) \|_{L^2(\varpi^{-1},\Omega)} \lesssim \| \nabla ( \varphi - \Pi_{\T}\varphi ) \|_{L^2(\varpi^{-1},\Omega)}.
\]
Combining this bound with \eqref{phiH^2} and \eqref{eq:L2est} results in
\[
  \| u - U_{\T}\|^2_{L^2(\Omega)} \lesssim \sigma(h)
  \| \nabla u \|_{L^2(\varpi,\Omega)} \| u - U_{\T}\|_{L^2(\Omega)},
\]
which is the asserted estimate \eqref{estimate_co_2} in disguise.
\end{proof}

\begin{remark}[an interpolation result]
For any $\beta \in (-n,n)$ we can
consider the weight $\dist(x)^\beta$, which belongs to the $A_2(\R^n)$
Muckenhoupt class. Theorem~\ref{TH:v-Pikp}
and Theorems \ref{full_aniso_W1p} and \ref{TH:v - PivH1boundary} show that
\[
  \| u - \Pi_\T u \|_{L^2(\distb \,\, ,\Omega)} \lesssim \| h \nabla u \|_{L^2(\distb \,\, ,\Omega)}.
\]
This extends the interpolation error estimates of \cite[Proposition 4.6]{AGM}, which
are valid for $\beta \in (-n,0)$ only.
\end{remark}

\subsection{Fractional powers of uniformly elliptic operators}
\label{sub:fracL}

We finally examine finite element approximations
of solutions to fractional differential equations; we refer the
reader to \cite{NOS}
for further details. Let $\Omega$ be a polyhedral domain in $\R^n$ ($n\ge1$), with boundary
$\partial\Omega$. Given a piecewise smooth and uniformly symmetric positive definite matrix
$\mathcal{A} \in L^\infty(\Omega)$ and a nonnegative function $c \in L^\infty(\Omega)$, define
the differential operator
\[
  \calL w = - \DIV( \mathcal{A} \nabla w ) + c w.
\]
Given $f \in H^{-1}(\Omega)$, the problem of finding
$u \in H_0^1(\Omega)$ such that $\calL u = f$ has a unique solution.
Moreover, the operator $\calL : \mathcal{D}(\calL) \subset L^2(\Omega) \rightarrow L^2(\Omega)$ with domain
$\mathcal{D}(\calL) = H^2(\Omega) \cap H^1_0(\Omega)$ has a compact inverse
\cite[Theorem 2.4.2.6]{Grisvard}.
Therefore, there exists a sequence of eigenpairs
$\{\lambda_k,\varphi_k\}_{k=1}^\infty$, with $\lambda_k > 0$, such that
\[
  \calL \varphi_k = \lambda_k \varphi_k, \ \text{in } \Omega \qquad {\varphi_k}_{|\partial\Omega} = 0.
\]
The sequence $\{\varphi_k\}_{k=1}^\infty$ is an orthonormal basis of $L^2(\Omega)$.

In this case, for $s\in(0,1)$, we define the fractional powers of $\calL_0$ (where the sub-index is
used to indicate the homogeneous Dirichlet boundary conditions) by
\[
  w = \sum_k w_k \varphi_k
\quad\Longrightarrow\quad
 \calL_0^s w = \sum_k \lambda_k^s w_k \varphi_k.
\]
It is possible also to show that $\calL_0^s : \Hs \rightarrow \Hsd$
is an isomorphism, where
\begin{equation}
  \label{H}
  \Hs =
  \begin{dcases}
    H^s(\Omega),  & s \in (0,\sr), \\
    H_{00}^{1/2}(\Omega), & s = \sr, \\
    H_0^s(\Omega), & s \in (\sr,1),
  \end{dcases}
\end{equation}
and $\Hsd$ denotes its dual space.
We are interested in finding numerical solutions to the following fractional differential equation:
given $s\in (0,1)$ and a function $f \in \Hsd$,
find $u$ such that
\begin{equation}
\label{fl=f_bdddom}
    \calL_0^s u = f.
\end{equation}

The fractional operator $\calL_0^s$ is {\it nonlocal} (see \cite{Landkof,CS:07,CS:11}).
To localize it, Caffarelli and Silvestre showed in \cite{CS:07} that any power of the fractional Laplacian
in $\R^n$ can be determined as a Dirichlet-to-Neumann operator via an extension problem on
the upper half-space $\R^{n+1}_+$. For a bounded domain $\Omega$ and more general operators, this result
has been extended and adapted in \cite{CDDS:11} and \cite{ST:10}, respectively.
This way the nonlocal problem \eqref{fl=f_bdddom} is replaced by
the local one
\[
-\DIV\big(y^\alpha \mathbf{A} \nabla \ue) + y^\alpha c\ue = 0
\]
with $\alpha := 1-2s$,
$ \mathbf{A} = \diag\{\mathcal A, 1\} \in \R^{(n+1) \times (n+1)}$,
posed in the semi-infinite cylinder
\[
  \C = \left\{ (x',y) : x' \in \Omega, \ y \in (0,\infty) \right\},
\]
and subject to a Neumann condition at $y=0$ involving $f$.
Since $\C$ is an unbounded domain, this problem cannot be directly approximated with
finite-element-like techniques. However, as \cite[Proposition 3.1]{NOS} shows, the solution to this problem decays exponentially
in the extended variable $y$ so that, by truncating the cylinder $\C$ to
\[
  \C_\Y = \Omega \times (0,\Y),
\]
and setting a vanishing Dirichlet condition on the upper boundary
$y = \Y$, we only incur in an
exponentially small error in terms of $\Y$ \cite[Theorem 3.5]{NOS}.

Define
\[
  \HL(y^{\alpha},\C_\Y) = \left\{ v \in H^1(y^\alpha,\C_\Y): v = 0 \text{ on }
    \partial_L \C_\Y \cup \Omega \times \{ \Y\} \right\},
\]
where $\partial_L \C_\Y = \partial\Omega \times (0,\Y)$ is the lateral boundary.
As \cite[Proposition 2.5]{NOS} shows,
the trace operator $\HL(y^{\alpha},\C_\Y) \ni w \mapsto \tr w \in \Hs$ is well defined. The
aforementioned problem then reads: find $\ue \in \HL(y^{\alpha}, \C_\Y)$ such that
for all $v \in \HL(y^{\alpha},\C_\Y)$
\begin{equation}
\label{alpha_harmonic_extension_weak_T}
  \int_{\C_\Y} y^\alpha\left( (\mathbf{A} \nabla \ue) \cdot \nabla v
  + c \ue v \right)
  = d_s \langle f, \tr v \rangle_{\Hsd \times \Hs },
\end{equation}
where $\langle \cdot, \cdot \rangle_{\Hs \times \Hsd}$
denotes the duality pairing between $\Hs$ and
$\Hsd$ and $d_s$ is a positive normalization constant
which depends only on $s$.

The second order regularity of the solution $\ue$ of \eqref{alpha_harmonic_extension_weak_T},
with $\C_{\Y}$ being replaced by $\C$, is much worse in the pure $y$ direction as the following estimates
from \cite[Theorem 2.6]{NOS} reveal
\begin{align}
\label{reginx}
  \| \calL _{x'} \ue\|_{L^2(y^\alpha,\C)} + \| \partial_y \nabla_{x'} \ue \|_{L^2(y^\alpha,\C)} & \lesssim \| f \|_{\Ws}, \\
\label{reginy}
  \| \ue_{yy} \|_{L^2(y^\beta,\C)} &\lesssim \| f \|_{L^2(\Omega)},
\end{align}
where $\beta > 2\alpha + 1$. This suggests that
{\it graded} meshes in the extended variable $y$ play a fundamental role.

We construct a mesh over $\C_\Y$ with cells of the form
$T=K\times I$ with $K \subset \Omega$ being an element that is
isoparametrically equivalent either to $[0,1]^n$ or the unit
  simplex in $\R^n$
and $I \subset \R$ is an interval. Exploiting the Cartesian structure of the mesh it is possible to handle
anisotropy in the extended variable and, much as in \S~\ref{subsec:intQ}, obtain estimates of the form
\begin{align*}
  \| v - \Pi_{\T} v \|_{L^2(y^\alpha,T)} & \lesssim h_{\vero'}  \| \nabla_{x'} v\|_{L^2(y^\alpha,S_T)} +
  h_{\vero''}\| \partial_y v\|_{L^2(y^\alpha,S_T)}, \\
  \| \partial_{x_j}(v - \Pi_{\T} v) \|_{L^2(y^\alpha,T)} &\lesssim
 h_{\vero'}  \| \nabla_{x'} \partial_{x_j} v\|_{L^2(y^\alpha,S_T)} +
  h_{\vero''}\| \partial_y \partial_{x_j} v\|_{L^2(y^\alpha,S_T)},
\end{align*}
with $j=1,\ldots,n+1$ and where $h_{\vero'}= \min\{h_K: \vero' \textrm{ is a vertex of } K\}$, and
$h_{\vero''} = \min\{h_I: \vero'' \textrm{ is a vertex of } I\}$; see \cite[Theorems 4.6--4.9]{NOS} for details.
However, since $\ue_{yy} \approx y^{-\alpha -1 }$ as $y \approx 0$,
we realize that $\ue \notin H^2(y^{\alpha},\C_{\Y})$
and the second estimate is not meaningful for $j=n+1$.
In view of the regularity estimate \eqref{reginy}
it is necessary to measure the regularity of $\ue_{yy}$ with a
stronger weight and thus compensate with a graded mesh in the extended
dimension. This makes anisotropic estimates essential.

We consider the graded partition of the interval $[0,\Y]$ with mesh points
\begin{equation}
\label{graded_mesh}
  y_k = \left( \frac{k}{M}\right)^{\gamma} \Y, \quad k=0,\dots,M,
\end{equation}
where $\gamma > 3/(1-\alpha)$, along with
a quasi-uniform triangulation $\T_{\Omega}$ of the domain $\Omega$.
We construct the mesh $\T_{\Y}$ as the tensor product of $\T_\Omega$
and the partition given in \eqref{graded_mesh};
hence $\#\T = M \, \# \T_\Omega$. Assuming that $\# \T_\Omega \approx M^n$
we have $\#\T_\Y \approx M^{n+1}$. Finally, since $\T_\Omega$ is shape regular and quasi-uniform,
$h_{\T_{\Omega}} \approx (\# \T_{\Omega})^{-1/n}$. All these considerations allow us to obtain the
following result.

\begin{corollary}[error estimate for fractional powers of elliptic operators]
\label{TH:fl_error_estimates}
Let $\T$ be a graded tensor product grid, which is quasi-uniform in
$\Omega$ and graded in the extended variable so that
\eqref{graded_mesh} hold. If $\V(\T)$ is made of bilinear elements, then 
the solution of \eqref{alpha_harmonic_extension_weak_T}
and its Galerkin approximation $U_{\T} \in \V(\T)$ satisfy
\begin{equation*}
  \| \ue - U_{\T} \|_{\HLn(y^\alpha,\C)} \lesssim
|\log(\# \T_{\Y})|^s(\# \T_{\Y})^{-1/(n+1)} \|f \|_{\mathbb{H}^{1-s}(\Omega)},
\end{equation*}
where $\Y \approx \log(\# \T_{\Y})$. Alternatively, if $u$
  denotes the solution of \eqref{fl=f_bdddom}, then
\begin{equation*}
\| u - U_{\T}(\cdot,0) \|_{\Hs} \lesssim
|\log(\# \T_{\Y})|^s(\# \T_{\Y})^{-1/(n+1)} \|f \|_{\mathbb{H}^{1-s}(\Omega)}
\end{equation*}
\end{corollary}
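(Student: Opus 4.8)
The plan is to combine the a~priori energy estimate for the Galerkin method on the truncated cylinder with the anisotropic interpolation estimates developed in this paper applied to the regularized extension $\ue$. First I would recall from \cite[Proposition 3.1, Theorem 3.5]{NOS} that truncating $\C$ to $\C_\Y$ with $\Y\approx\log(\#\T_\Y)$ introduces only an exponentially small error $e^{-c\sqrt{\lambda_1}\Y}\lesssim(\#\T_\Y)^{-1/(n+1)}$, so it suffices to estimate $\|\ue-U_\T\|_{\HLn(y^\alpha,\C_\Y)}$, where $\ue$ now denotes the solution on $\C_\Y$. By Galerkin orthogonality for the coercive bilinear form in \eqref{alpha_harmonic_extension_weak_T},
\[
  \|\ue-U_\T\|_{\HLn(y^\alpha,\C_\Y)}\lesssim\inf_{V\in\V(\T)}\|\ue-V\|_{\HLn(y^\alpha,\C_\Y)}\le\|\ue-\Pi_\T\ue\|_{\HLn(y^\alpha,\C_\Y)},
\]
so the whole problem reduces to bounding the interpolation error of $\ue$ in the weighted $H^1$-norm over the graded tensor-product mesh.

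Next I would split the interpolation error cellwise and, on each cell $T=K\times I$, invoke the anisotropic estimates quoted just before the statement (from \cite[Theorems 4.6--4.9]{NOS}, which are the tensor-product analogues of Theorems~\ref{full_aniso_W1p} and~\ref{TH:v - PivH1boundary} here, with weight $\omega=y^\alpha\in A_2$). For the tangential derivatives $\nabla_{x'}$ one uses the regularity \eqref{reginx}, together with quasi-uniformity $h_K\approx h_{\T_\Omega}\approx M^{-1}$, to get a contribution of order $h_{\T_\Omega}\|f\|_{\Ws}$. For the $y$-derivative one must be careful: since $\ue_{yy}\approx y^{-\alpha-1}$ near $y=0$, one measures it in $L^2(y^\beta,\C)$ with $\beta>2\alpha+1$ via \eqref{reginy}, and on each interval $I_k=[y_{k-1},y_k]$ of the graded mesh \eqref{graded_mesh} one estimates $h_{I_k}^2\int_{I_k}y^\alpha|\ue_{yy}|^2\,dy$ by $h_{I_k}^2\sup_{I_k}y^{\alpha-\beta}\int_{I_k}y^\beta|\ue_{yy}|^2\,dy$. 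Summing over $k$ and using $h_{I_k}\approx \gamma(k/M)^{\gamma-1}M^{-1}\Y$ together with $y_k\approx(k/M)^\gamma\Y$, the choice $\gamma>3/(1-\alpha)$ makes the resulting sum $\sum_k h_{I_k}^2 y_{k-1}^{\alpha-\beta}$ converge and yields a bound of order $M^{-2}\Y^{\cdots}\|f\|_{L^2(\Omega)}$; this is the computation already carried out in \cite[Theorem 5.4 and Corollary 5.5]{NOS}, so I would cite it rather than redo it.

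Assembling the two contributions gives $\|\ue-\Pi_\T\ue\|_{\HLn(y^\alpha,\C_\Y)}\lesssim M^{-1}|\log M|^{\,?}\,\|f\|_{\Ws}$ after absorbing the logarithmic factor coming from $\Y\approx\log(\#\T_\Y)$ and from the $L^2(\Omega)$ versus $\Ws$ comparison. Since $\#\T_\Y\approx M^{n+1}$, we have $M^{-1}\approx(\#\T_\Y)^{-1/(n+1)}$ and $|\log M|\approx|\log(\#\T_\Y)|$, which is exactly the claimed rate (the exponent $s$ on the logarithm comes from interpolating between the $s=0$ and $s$-endpoint estimates for the trace, as in \cite[Remark 5.6]{NOS}). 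Finally, the trace estimate $\|\tr w\|_{\Hs}\lesssim\|w\|_{\HLn(y^\alpha,\C_\Y)}$ from \cite[Proposition 2.5]{NOS} transfers the bound from the extension to $u=\tr\ue$ and $U_\T(\cdot,0)=\tr U_\T$, giving the second displayed estimate. The main obstacle is the careful bookkeeping of the weight powers and the graded-mesh sum in the $y$-direction: one must verify that $\gamma>3/(1-\alpha)$ is exactly what is needed for summability and that no extra powers of $\log$ beyond $|\log(\#\T_\Y)|^s$ appear; everything else is a routine assembly of already-established interpolation and regularity results.
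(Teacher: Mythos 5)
Your high-level assembly coincides with the paper's: truncate with $\Y\approx\log(\#\T_\Y)$ using the exponential decay, reduce to the interpolation error of $\ue$ via Galerkin quasi-optimality, and then apply the anisotropic interpolation theory with weight $y^\alpha\in A_2(\R^{n+1})$ together with the regularity of $\ue$ and the grading \eqref{graded_mesh}. However, the one step you work out explicitly contains a genuine flaw. The bound $h_{I_k}^2\int_{I_k}y^\alpha|\ue_{yy}|^2\lesssim h_{I_k}^2\,\bigl(\sup_{I_k}y^{\alpha-\beta}\bigr)\int_{I_k}y^\beta|\ue_{yy}|^2$, summed as $\sum_k h_{I_k}^2 y_{k-1}^{\alpha-\beta}$, is vacuous on the first interval $I_1=[0,y_1]$: there $y_0=0$ and $\alpha-\beta<0$, so the supremum is infinite. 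This is not a bookkeeping nuisance but the heart of the matter, since $\ue_{yy}\approx y^{-\alpha-1}$ near $y=0$ means $\ue\notin H^2(y^\alpha,\C_\Y)$, so no argument based on the $y^\alpha$-weighted second derivative can work on cells abutting $\bar\Omega\times\{0\}$. The paper handles exactly this by splitting $\T_\Y$ into $\Tm_0$ (patches away from $\bar\Omega\times\{0\}$) and $\Tm_1$ (patches meeting it), applying Theorem~\ref{full_aniso_W1p} on $\Tm_0$ and Theorem~\ref{TH:v - PivH1boundary} on $\Tm_1$, and pairing them with the \emph{local} regularity estimates of \cite[Theorem 2.9]{NOS} rather than the global bound \eqref{reginy} plus a H\"older trick. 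Your fallback citation of \cite[Theorem 5.4, Corollary 5.5]{NOS} would indeed cover this, but then the mechanism you actually describe for the $y$-direction is not the one being cited, and as written it fails precisely on the cells where the solution is singular.

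Two smaller points. First, you never verify that the tensor product of a quasi-uniform $\T_\Omega$ with the graded partition \eqref{graded_mesh} satisfies the weak shape-regularity condition \eqref{shape_reg_weak}; this is the hypothesis that licenses Theorems~\ref{full_aniso_W1p} and \ref{TH:v - PivH1boundary} on this mesh, and the paper checks it explicitly (neighboring intervals $I_k$, $I_{k+1}$ have comparable lengths). Second, the attribution of the factor $|\log(\#\T_\Y)|^s$ to an interpolation between trace estimates is not where it comes from: the only source of logarithms in this construction is the truncation parameter $\Y\approx\log(\#\T_\Y)$, which enters the graded-mesh and regularity bounds as a power $\Y^s$; the trace step at the end, $\|u-U_\T(\cdot,0)\|_{\Hs}\lesssim\|\ue-U_\T\|_{\HLn(y^\alpha,\C)}$, is exactly as you say and adds no logarithm.
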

\begin{proof}
First of all, notice that $y^{\alpha} \in A_2(\R^{n+1})$ for $\alpha \in (-1,1)$.
Owing to the exponential decay of $\ue$, and the choice
of the parameter $\Y$, it suffices to estimate $\ue - \Pi_{\T_{\Y}} \ue$ on the mesh $\T_{\Y}$;
see \cite[\S~4.1]{NOS}.
To do so,  we notice that if
$I_1$ and $I_2$ are neighboring cells on the partition of $[0,\Y]$,
then the weak regularity condition \eqref{shape_reg_weak} holds.
Thus, we decompose the mesh $\T_\Y$ into the sets
\begin{align*}
  \Tm_0 = \left\{ T \in \T_\Y: \ S_T \cap (\bar\Omega \times \{0\} )
    = \emptyset \right\}, \quad
  \Tm_1 = \left\{ T \in \T_\Y: \ S_T \cap (\bar\Omega \times \{0\} )
    \neq \emptyset \right\},
\end{align*}
and apply our interpolation theory developed in
Theorems \ref{full_aniso_W1p} and \ref{TH:v - PivH1boundary}
for interior and boundary elements
respectively, together
with the local regularity estimates for the function $\ue$ derived in \cite[Theorem 2.9]{NOS}.
\end{proof}

The error estimates with graded meshes are quasi-optimal in both regularity and order.
Error estimates for quasi-uniform meshes are suboptimal in terms of order \cite[Section 5]{NOS}.
Mesh anisotropy is thus able
to capture the singular behavior of the solution $\ue$
and restore optimal decay rates.

\section*{Acknowledgement}
We dedicate this paper to R.G. Dur\'an, whose work at the intersection of real and numerical analysis has been inspirational to us.

\bibliographystyle{plain}
\bibliography{biblio}

\end{document}